\theoremstyle{definition}
\newtheorem{defn}{Definition}
\theoremstyle{plain}
\newtheorem{thm}{Theorem}
\newtheorem{lem}{Lemma}
\newcommand{\N}{\mathbb{N}}
\newcommand{\R}{\mathbb{R}}
\newcommand{\C}{\mathbb{C}}
\newcommand{\CC}{\widehat{\mathbb{C}}}
\renewcommand{\SS}{\mathbb{S}^{2}}
\newcommand{\D}{\mathbb{D}}
\newcommand{\JJ}{\mathcal{J}}
\newcommand{\JJcrit}{\mathcal{J}_{\mathrm{crit}}}
\newcommand{\HH}{\mathcal{H}}
\DeclareMathOperator{\modulus}{\mathrm{mod}}
\newcommand{\longlong}[1]{\overset{#1}{\relbar\joinrel\relbar\joinrel\longrightarrow}}
\title{A family of rational maps \\ with buried Julia components}
\author{Sébastien Godillon}
\date{November 24, 2013}
\begin{document}


\maketitle

\begin{abstract}
It is known that the disconnected Julia set of any polynomial map does not contain buried Julia components. But such Julia components may arise for rational maps. The first example is due to Curtis T. McMullen who provided a family of rational maps for which the Julia sets are Cantor of Jordan curves. However all known examples of buried Julia components, up to now, are points or Jordan curves and comes from rational maps of degree at least 5.

This paper introduce a family of hyperbolic rational maps with disconnected Julia set whose exchanging dynamics of postcritically separating Julia components is encoded by a weighted dynamical tree. Each of these Julia sets presents buried Julia components of several types: points, Jordan curves, but also Julia components which are neither points nor Jordan curves. Moreover this family contains some rational maps of degree 3 with explicit formula that answers a question McMullen raised.
\end{abstract}

\newpage

\tableofcontents

\newpage


\section{Introduction}\label{SecIntroduction}

For any rational map $f$ of degree $d\geqslant 2$ on the Riemann sphere $\CC$, we denote by $J(f)$ its Julia set, namely the closure of the set of repelling periodic points. We recall that $J(f)$ is a fully invariant non-empty perfect compact set which either is connected or has uncountably many connected components (see \cite{BeardonBook}, \cite{CarlesonGamelinBook}, \cite{MilnorBook}). This paper focuses on the disconnected case. Every connected component of $J(f)$ is called a Julia component and every connected component of the Fatou set $\CC-J(f)$ is called a Fatou domain.

A Julia component is said to be \textbf{buried} if it has no intersection with the boundary of any Fatou domain. In particular buried Julia components can not occur in the polynomial case (since the Julia set coincides with the boundary of the unbounded Fatou domain). The same holds if the Julia set is a Cantor set, or more generally if the complementary of every Julia component is connected (since the Fatou set is then connected). That suggests much more sophisticated topological structures for Julia sets with some buried Julia components than those encountered in the polynomial case.

The first example of rational maps with buried Julia components is due to Curtis T. McMullen. Consider the family of rational maps given by
$$g_{c,\lambda}:z\mapsto z^{d_{\infty}}+c+\frac{\lambda}{z^{d_{0}}}\quad\text{where}\ d_{\infty},d_{0}\geqslant 1\ \text{and}\ c,\lambda\in\C.$$
The special case $c=0$ has been studied in \cite{AutomorphismsRationalMaps} (see also \cite{SingularPerturbations}), where it is proved that if the following condition is satisfied
\begin{equation}\label{H0}
	\frac{1}{d_{\infty}}+\frac{1}{d_{0}}<1 \tag{H0}
\end{equation}
and if $|\lambda|>0$ is small enough then $J(g_{0,\lambda})$ is a Cantor of Jordan curves, namely homeomorphic to the product of a Cantor set with a Jordan curve (see Figure \ref{FigExamples}.a). Recall that any Cantor set is homeomorphic to the no middle third set on a line segment which contains uncountably many points which are not endpoints of any removing open segment. Each of these points corresponds to a buried Jordan curve in $J(g_{0,\lambda})$.

In \cite{RationalMapsDisconnectedJuliaSet}, the authors have provided another example by slightly modifying the map $g_{-1,\lambda}$ for $d_{\infty}=2$ and $d_{0}=3$ (that satisfies assumption (\ref{H0})) in a clever way:
$$\widetilde{g_{-1,\lambda}}:z\mapsto\frac{1}{z}\circ(z^{2}-1)\circ\frac{1}{z}+\frac{\lambda}{z^{3}}=\frac{z^{2}}{1-z^{2}}+\frac{\lambda}{z^{3}}\quad\text{where}\ \lambda\in\C.$$
If $|\lambda|>0$ is small enough then $J(\widetilde{g_{-1,\lambda}})$ has the same topological structure than $J(g_{0,\lambda})$ except that one fixed Julia component (which contains the boundary of the unbounded Fatou domain and hence is not buried) is quasiconformally homeomorphic to the Julia set of $z\mapsto z^{2}-1$. The uncountably many Julia components which are not eventually mapped under iteration onto this fixed Julia component are buried Jordan curves in $J(\widetilde{g_{-1,\lambda}})$ (see Figure \ref{FigExamples}.b).

Examples of buried Jordan components which are not Jordan curves have appeared in some works. For instance in \cite{GeneralizedMcMullenDomain} (see also \cite{SingularPerturbationsMcMullenDomain} and \cite{SingularPerturbationsQuadraticMultiplePoles}), the authors have studied the family $g_{c,\lambda}$  for $d_{\infty}=d_{0}\geqslant 3$ (that satisfies assumption (\ref{H0})) and for a fixed parameter $c$ chosen so that for the polynomial $z\mapsto z^{d_{\infty}}+c$ the critical point 0 lies in a cycle of period at least 2. In that case, if $|\lambda|>0$ is small enough then $J(g_{c,\lambda})$ still has uncountably many Jordan curves as buried Jordan components but also uncountably many points. The remaining Julia components are eventually mapped under iteration onto a fixed Julia component (which coincides with the boundary of the unbounded Fatou domain and hence is not buried) quasiconformally homeomorphic to the Julia set of $z\mapsto z^{d_{\infty}}+c$. Each of these not buried Julia components has infinitely many ``decorations'' and every buried point component is accumulated by a nested sequence of such decorations (see Figure \ref{FigExamples}.c).

\begin{figure}[!htb]
	\begin{center}
	\includegraphics[width=\textwidth]{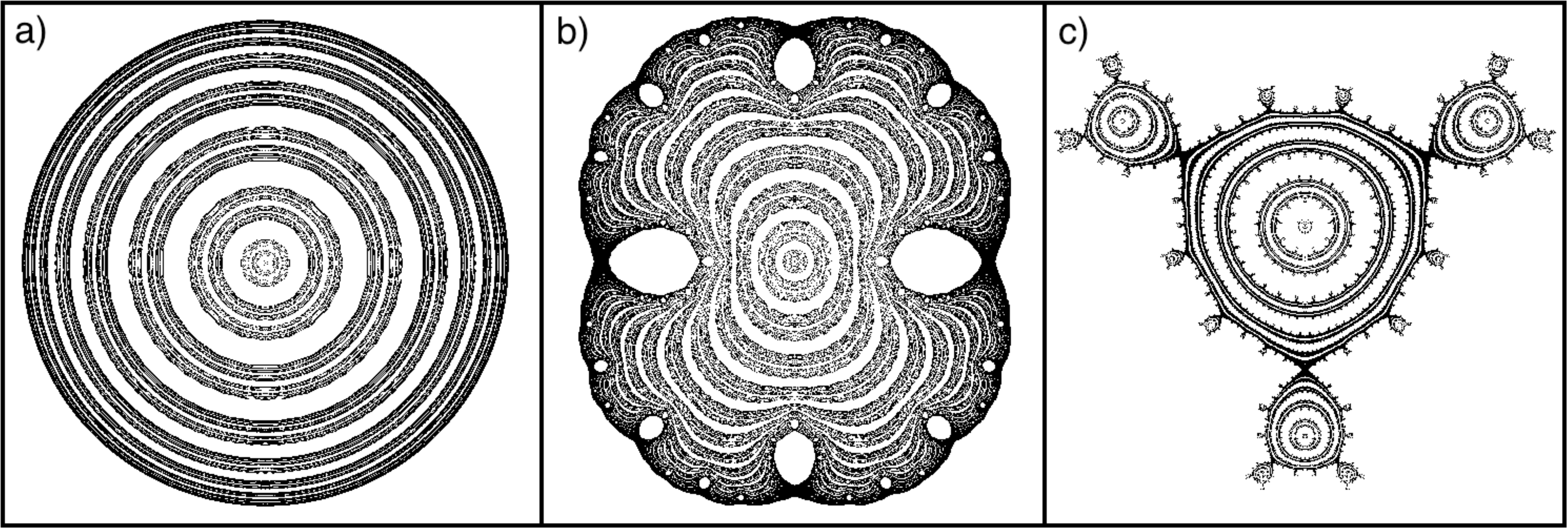}
	\caption{\textbf{a)} $J(g_{0,\lambda})$ for $d_{\infty}=2$, $d_{0}=3$ and $\lambda\approx10^{-9}$. \\ \textbf{b)} $J(\widetilde{g_{-1,\lambda}})$ for $d_{\infty}=2$, $d_{0}=3$ and $\lambda\approx10^{-9}$. \\ \textbf{c)} $J(g_{c,\lambda})$ for $d_{\infty}=d_{0}=3$, $c=-i$ and $\lambda\approx10^{-9}$.} 
	\label{FigExamples}
	\end{center}
\end{figure}

All the previous examples are rational maps of degree $d_{\infty}+d_{0}$ at least 5 according to assumption (\ref{H0}). The existence question of buried Julia components for rational maps of degree less than 5 has been raised in \cite{AutomorphismsRationalMaps}. In the last decade, a number of papers have appeared that deal with subfamilies of $g_{c,\lambda}$ or some slightly perturbations of it. Some of them present sophisticated Julia sets with buried Julia components, however the degree of these examples is always at least equal to 5. Furthermore the buried Julia components of these examples are points or Jordan curves.

The aim of this paper is to answer the question Curtis T. McMullen has raised by providing a family of rational maps of degree $3$ which does not come from the family $g_{c,\lambda}$ and whose Julia set presents buried Julia components of several types: points, Jordan curves but also Julia components which are neither points nor Jordan curves. One of our main result here is the following.

\begin{thm}\label{ThmPersianCarpet}
Consider the family of cubic rational maps given by
$$f_{\lambda}:z\mapsto\dfrac{(1-\lambda)\Big[(1-4\lambda+6\lambda^{2}-\lambda^{3})z-2\lambda^{3}\Big]}{(z-1)^{2}\Big[(1-\lambda-\lambda^{2})z-2\lambda^{2}(1-\lambda)\Big]}\quad\text{where}\ \lambda\in\C.$$
If $|\lambda|>0$ is small enough then $J(f_{\lambda})$ contains buried Julia components of several types: \\
\begin{tabularx}{\textwidth}{rX}
	\emph{(point type)} & uncountably many points; \\
	\emph{(circle type)} & uncountably many Jordan curves; \\
	\emph{(complex type)} & countably many preimages of a fixed Julia component which is quasiconformally homeomorphic to the connected Julia set of $f_{0}:z\mapsto\frac{1}{(z-1)^{2}}$.
\end{tabularx}
\end{thm}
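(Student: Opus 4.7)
The plan is to treat $f_\lambda$ as a singular perturbation of $f_0(z)=1/(z-1)^2$ and then read off the Julia-component structure from the associated weighted dynamical tree developed in the body of the paper. First I would unpack the dynamics of $f_0$: a direct computation yields $f_0(0)=1$, $f_0(1)=\infty$, $f_0(\infty)=0$, so $\{0,1,\infty\}$ forms a super-attracting $3$-cycle, and since $f_0'(z)=-2/(z-1)^3$ the only critical points of the degree-$2$ map $f_0$ are $1$ and $\infty$, both already on this cycle. Hence $f_0$ is postcritically finite hyperbolic, $J(f_0)$ is connected, and the Fatou set is the union of the three immediate basins $U_0,U_1,U_\infty$ cyclically permuted by $f_0$, together with their iterated preimages.

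For $\lambda\neq 0$ the map $f_\lambda$ is cubic, hence has $4$ critical points by Riemann--Hurwitz. Two persist: the pole $z=1$ of order $2$, and $z=\infty$ where $f_\lambda(z)\sim C_\lambda/z^{2}$. The other two ``free'' critical points are created by the perturbation and, for $|\lambda|$ small, cluster near the new simple pole $\alpha_\lambda := 2\lambda^2(1-\lambda)/(1-\lambda-\lambda^2)\to 0$. The relations $1\mapsto\infty\mapsto 0$ are preserved, while $f_\lambda(0)=\lambda$ is merely close to the cycle. Using B\"ottcher coordinates near the persistent super-attracting critical points $1$ and $\infty$, I would show that for $|\lambda|$ small both free critical values are captured by the immediate basin of $0$. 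This forces the preimage of $U_0$ to pick up a new component, an annular ``trap'' around $\alpha_\lambda$, on which $f_\lambda$ restricts to a degree-$2$ covering onto $U_0$ punctured at $0$; iterating this Markov structure then produces a self-similar nested family of annuli.

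At this point I would invoke the general realization theorem developed earlier in the paper: the trap configuration above is exactly the combinatorics of one specific weighted dynamical tree, so that the postcritically separating Julia components of $f_\lambda$ are in bijection with the prescribed tree data. Reading off the three claimed families then produces: \emph{(complex type)} the countably many iterated preimages of the distinguished root Julia component, on which $f_\lambda$ restricts to a quadratic polynomial-like map hybrid-equivalent to $f_0$ by Douady--Hubbard straightening (whence the quasiconformal identification with $J(f_0)$); \emph{(circle type)} uncountably many Jordan curves arising as in McMullen's $g_{0,\lambda}$, as limits of nested round annuli coming from the trap; \emph{(point type)} uncountably many points obtained as nested intersections of shrinking annular decorations which never descend onto a circle-type component. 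Buriedness is then verified by checking that every Fatou domain is an iterated preimage of one of $U_0,U_1,U_\infty$ with boundary inside the root component, and that the three families above avoid the root component and its preimages by the tree combinatorics. The main obstacle, as I see it, is proving that the explicit algebraic formula for $f_\lambda$ genuinely realizes the prescribed weighted tree---controlling both free critical orbits and establishing the hybrid equivalence of the quadratic polynomial-like restriction with $f_0$; once that verification is done, the entire classification follows by direct application of the general framework.
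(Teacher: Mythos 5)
Your overall strategy---treat $f_\lambda$ as a perturbation of $f_0$ and read off the Julia-component structure from the weighted tree framework---is in the same spirit as the paper's Section~\ref{SecFormula}, but several of the concrete dynamical claims are incorrect, and those errors are load-bearing.

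The most serious gap is that you do not identify the exact super-attracting cycle of period~$4$. You write that ``$f_\lambda(0)=\lambda$ is merely close to the cycle,'' suggesting you regard $\{0,1,\infty\}$ as an approximate cycle that the orbit of $\lambda$ merely shadows. In fact the formula is engineered precisely so that $\lambda\mapsto 1\mapsto\infty\mapsto 0\mapsto\lambda$ is an \emph{exact} super-attracting $4$-cycle ($f_\lambda(\lambda)=1$ with local degree $2$ by construction, as imposed in the paper's derivation of the formula). Three of the four critical points ($\lambda$, $1$, $\infty$) lie \emph{on} this cycle; the fourth ($\lambda'\approx-\lambda$) is the only free one, and what one must verify is that its orbit accumulates the cycle. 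Missing the lengthened cycle means the combinatorics you attach to $f_\lambda$ does not match the tree $\HH_{P}$ (which has a period-$4$ vertex cycle $c_0\to c_1\to c_2\to c_3\to c_0$), so the invocation of the realization framework does not land.

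Second, the ``annular trap'' you describe is the wrong one: the degree-$2$ annulus-to-disk branched covering in the construction (Lemma~\ref{LemFolding}, with $d_0+d_3=2$) is defined on an annulus around the pair of free critical points $\lambda,\lambda'$, not around the extra pole $\alpha_\lambda\approx 2\lambda^2$, and it maps onto a disk around $z_1=1$, not onto a punctured neighborhood of $0$. (The pole $\alpha_\lambda$ instead sits in the unbranched degree-$1$ annulus of Lemma~\ref{LemPreimage}, which carries the preimage of the branching part of the tree.) Consequently the Markov scheme you propose to iterate is not the one actually governing the Julia components.

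Third, Douady--Hubbard straightening is not the right tool for the complex-type component: $f_0(z)=1/(z-1)^2$ is not a polynomial, the restriction of $f_\lambda$ near the root component is not polynomial-like (the relevant domain is not simply connected), and the paper never invokes straightening. The quasiconformal identification of $J_\alpha$ with $J(f_0)$ comes directly from the surgery of Section~\ref{SecConstruction}: the quasiregular model $F$ literally equals $\widehat f=f_0$ on a large piece, and the conjugating quasiconformal map from Lemma~\ref{LemQuasiconformalSurgery} transports $J(\widehat f)$ to $J_\alpha$. Finally, note a methodological difference: the paper does not ``analyze the formula'' as a black box; it \emph{derives} the formula by imposing the critical-orbit constraints of the degree-$3$ weighted tree and then checks that the free critical point $\lambda'$ indeed falls into the attracting cycle for small $|\lambda|$, so that the classification of Sections~\ref{SecConstruction}--\ref{SecProperties} applies. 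Your proposal would need to establish independently, from the formula alone, that the annular decomposition of those sections holds for $f_\lambda$; with the cycle, annulus, and degree mislabelled as above, the current sketch does not do this.
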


An example of such Julia set is depicted in Figure \ref{FigPersianCarpet}. $J(f_{\lambda})$ is called a ``Persian carpet'' because of similarities with sophistications from carpet-weaving art: the Julia set of $f_{0}:z\mapsto\frac{1}{(z-1)^{2}}$ appears as a watermark in the central motif of the carpet whose surface is covered by an elaborate pattern of Cantor of Jordan curves, and there are some small Julia components everywhere that looks like dust. These small dusts contain nested sequences of finite coverings of the Persian carpet which accumulate buried point components.

\begin{figure}[!htb]
	\begin{center}
	\includegraphics[width=\textwidth]{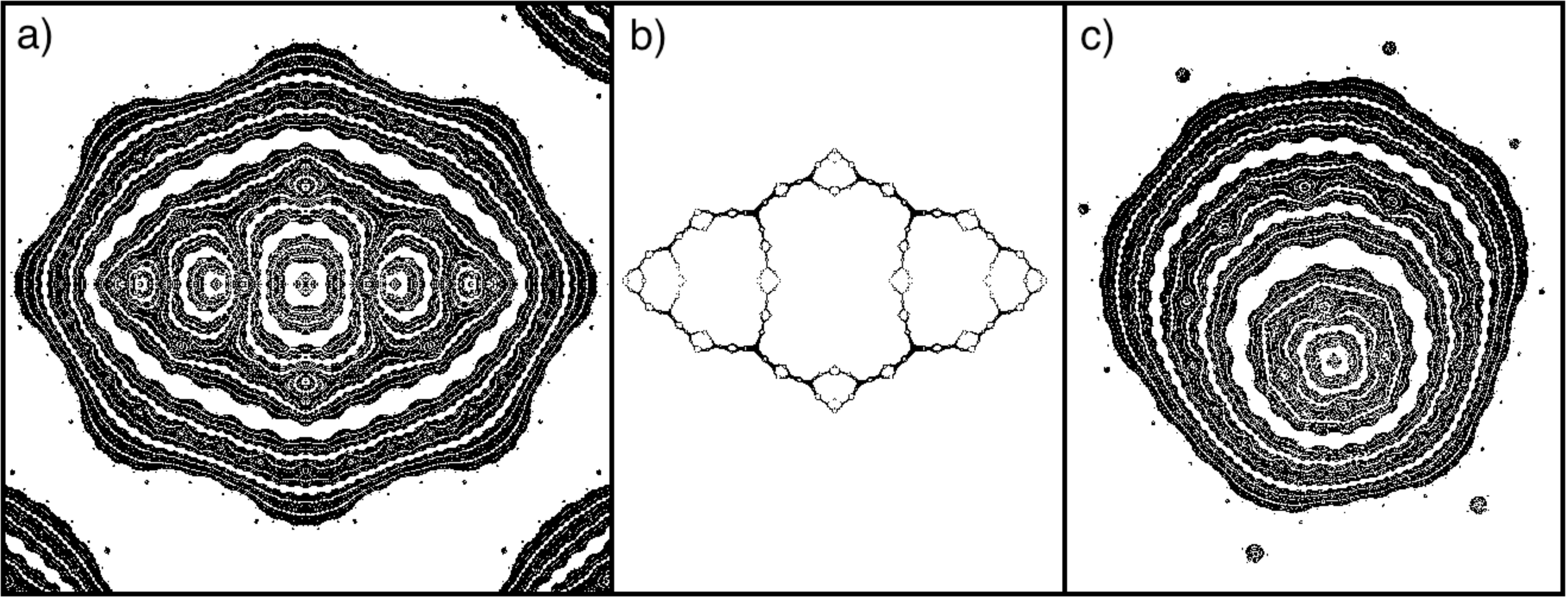}
\caption{\textbf{a)} A Persian carpet: $J(f_{\lambda})$ for $\lambda\approx10^{-3}$. \\ \textbf{b)} $J(f_{0})$ which appears as a buried Julia component in $J(f_{\lambda})$. \\ \textbf{c)} A magnification about a dust of the Persian carpet.}
	\label{FigPersianCarpet}
	\end{center}
\end{figure}

The Persian carpet example is maximal among rational maps with buried Julia components in the sense that buried Julia components can not occur for rational maps of degree less than $3$. Indeed, by a theorem in \cite{RationalMaps2CriticalPoints}, the Julia set of any quadratic rational maps is either connected or a Cantor set.

Furthermore, the Persian carpet example is maximal among geometrically finite rational maps (namely rational maps such that every critical point in the Julia set is preperiodic, in our case $f_{\lambda}$ is hyperbolic, namely it has no critical point in $J(f_{\lambda})$ for $|\lambda|>0$ small enough) in the sense that every Julia component (not necessarily buried) of such a map is one of the three types described in Theorem \ref{ThmPersianCarpet}. That follows from two results. Firstly, by a theorem in \cite{AutomorphismsRationalMaps}, every periodic Julia component of a rational map is either a point or quasiconformally homeomorphic to the connected Julia set of a rational map. Secondly, it has been proved in \cite{RationalMapsDisconnectedJuliaSet} that every Julia component of a geometrically finite rational map which is not eventually mapped under iteration onto a periodic Julia component is either a point or a Jordan curve.

The underlying idea in the construction of the Persian carpet example is that the sophisticated configuration on $\CC$ of Julia components which are not points may be encoded by a tree. Tree structures have appeared in various works on holomorphic dynamics (for instance Hubbard trees in \cite{DouadyHubbard1} to classify postcritically finite polynomial maps). The tree considered here is not embedded in $\CC$. It is seen as an abstract object which is very similar to, and actually inspired by, the trees introduced by Mitsuhiro Shishikura in \cite{TreesHermanRings} which describe the configurations of Herman rings for rational maps.

However, the purpose of this paper is only to introduce a family of rational maps coming from a particular tree which answers the question Curtis T. McMullen has raised. But not to discuss about the general existence question of rational maps whose configuration of Julia components is encoded by any given tree (that will be the purpose of future works) even if a general construction may be suggested (especially statements and discussions in Section \ref{SecEncoding}).

\textit{Organization of the paper.} Section \ref{SecEncoding} deals with exchanging dynamics of postcritically separating Julia components by weighted dynamical tree.

In Section \ref{SubSecMcMullen}, we specify the idea mentioned above by showing that, under assumption (\ref{H0}), the exchanging dynamics of Julia components for the family $g_{0,\lambda}$ is encoded by a certain weighted dynamical tree $(\HH_{Q},w)$ (see Theorem \ref{ThmMcMullen}).

The purpose of Section \ref{SubSecPersianCarpets}, is then to do the converse: starting from a particular dynamical tree $\HH_{P}$ more sophisticated than $\HH_{Q}$ and a weight function $w$ on its edges, Theorem \ref{ThmMain1} states the existence of rational maps with disconnected Julia set whose exchanging dynamics of postcritically separating Julia components is encoded by $(\HH_{P},w)$ if (and, actually, only if) two conditions (\ref{H1}) and (\ref{H2}) hold. Theorem \ref{ThmMain2} shows that the Julia sets of these rational maps own buried Julia components of every expected type.

The main part of the proofs of Theorem \ref{ThmMain1} and Theorem \ref{ThmMain2}, that is the construction by quasiconformal surgery of the required rational maps, is detailed in Section \ref{SecConstruction}.

In Section \ref{SecProperties}, some properties of the rational maps constructed in the previous section are shown. The properties about exchanging dynamics (Section \ref{SubSecExchangingDynamics}) conclude the proof of Theorem \ref{ThmMain1} while the properties about topology of some Julia components (Section \ref{SubSecTopology}) give the proof of Theorem \ref{ThmMain2}.

Section \ref{SecFormula} deals with a particular choice of the weight function $w$ for which the two assumptions (\ref{H1}) and (\ref{H2}) are satisfied and such that the rational maps in Theorem \ref{ThmMain1} and Theorem \ref{ThmMain2} have degree 3. In this case, an explicit formula is provided that concludes the proof of Theorem \ref{ThmPersianCarpet}.

Finally, some technical results used in the construction of Section \ref{SecConstruction} are collected in Section \ref{SecAppendix} with proofs or references.

\textit{Acknowledgment.} The author would like very much to thank Professor Tan Lei, the advisor of his thesis, together with Professor Cui Guizhen for all their useful comments and fruitful discussions on this work. Finally, the author thanks the referees for several helpful suggestions.


\section{Encoding by weighted dynamical trees}\label{SecEncoding}

For any rational map $f:\CC\rightarrow\CC$, we denote by $\JJ(f)$ the set of Julia components and we recall that $f$ induces a topological dynamical system on $\JJ(f)$ endowed with the usual distance between continua on $\CC$ equipped with the spherical metric (notice that $\JJ(f)$ is closed for this distance, that is not true in general for the Hausdorff distance). This topological dynamical system is called the \textbf{exchanging dynamics of Julia components}.

We recall that the critical points of f are the points where f is not locally injective, and the postcritical points of f are the points of the form $f^{n}(c)$ for some $n\geqslant 1$ and for some critical point $c$. A Julia component $J\in\JJ(f)$ is said to be postcritically separating if $J$ separates the postcritical set of $f$, or equivalently if $\CC-J$ has at least two connected components containing at least one postcritical point of $f$ each. We denote by $\JJcrit(f)$ the subset of postcritically separating Julia components in $\JJ(f)$. Remark that $\JJcrit(f)$ is forward invariant, and thus $f$ induces a topological dynamical system on $\JJcrit(f)$.

\subsection{McMullen's example}\label{SubSecMcMullen}

Consider the cubic polynomial $Q:z\mapsto 3z^{2}(\frac{3}{2}-z)$. It has two simple critical points: 0 which is fixed, and 1 which is mapped on 0 after two iterations.
$$\xymatrix{0 \ar@(ul,dl)[]_{2:1} & 1 \ar[r]^{2:1} & \frac{3}{2} \ar@/_{2pc}/[ll]_{1:1}}$$

Let $\HH_{Q}$ be its Hubbard tree, namely the smallest closed connected infinite union of internal rays which contains the postcritical set $\{0,\frac{3}{2}\}$ (see \cite{DouadyHubbard1}). In fact, $\HH_{Q}$ is the straight real segment $[0,\frac{3}{2}]$ or more precisely the union of two edges $[0,1]\cup[1,\frac{3}{2}]$ while the vertices are $0$, $1$ and $\frac{3}{2}$. Both edges of $\HH_{Q}$ are homeomorphically mapped by $Q$ onto the whole tree (see Figure \ref{FigMcMullenTree}.b).

\begin{figure}[!htb]
	\begin{center}
	\includegraphics[width=\textwidth]{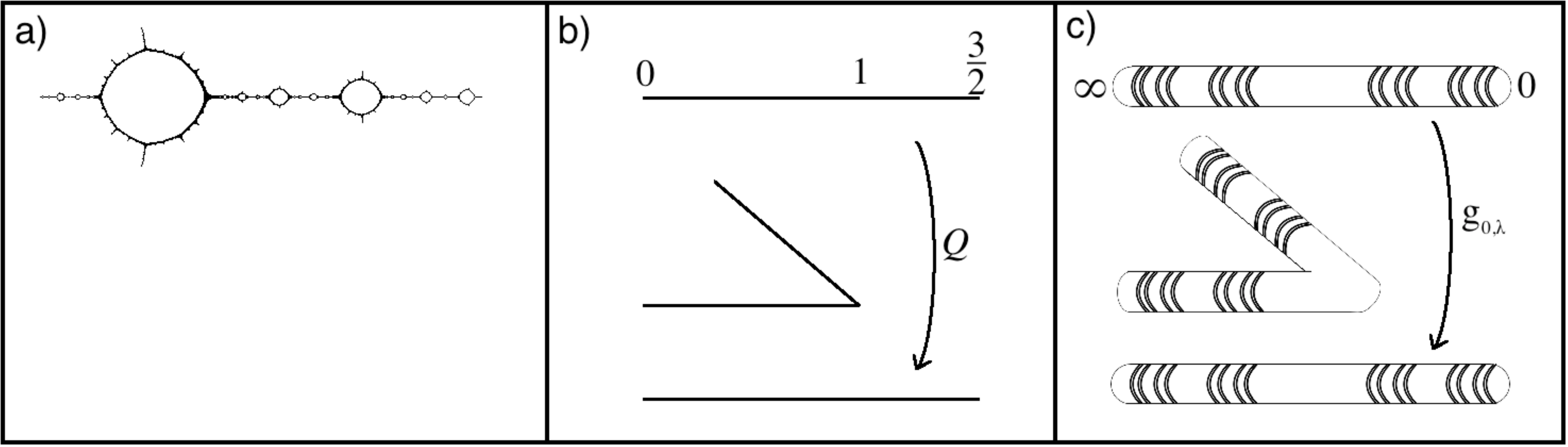}
	\caption{\textbf{a)} The Julia set of the polynomial $Q$. \\ \textbf{b)} The action of $Q$ on the Hubbard tree $\HH_{Q}$. \\ \textbf{c)} The action of $g_{0,\lambda}$ on the set of Julia components $\JJ(g_{0,\lambda})$.}
	\label{FigMcMullenTree}
	\end{center}
\end{figure}

Denote by $\JJ(\HH_{Q})$ the intersection set between the Hubbard tree $\HH_{Q}$ and the Julia set $J(Q)$. Notice that $\JJ(\HH_{Q})$ is disconnected (actually a Cantor set) and $Q$ induced a dynamical system on it since the Hubbard tree $\HH_{Q}$ and the Julia set $J(Q)$ are both invariant.

Finally, let $w$ be a weight function on the set of edges of $\HH_{Q}$, say $w([0,1])=d_{\infty}$ and $w([1,\frac{3}{2}])=d_{0}$ where $d_{\infty}$, and $d_{0}$ are positive integer.

The result about the family $g_{0,\lambda}$ discussed in the introduction (see Section \ref{SecIntroduction}) may be reformulated as follows.

\begin{thm}\label{ThmMcMullen}
If the weighted dynamical tree $(\HH_{Q},w)$ satisfies the following condition
\begin{equation}
	\frac{1}{d_{\infty}}+\frac{1}{d_{0}}<1
	\tag{H0}
\end{equation}
then for every $|\lambda|>0$ small enough, the exchanging dynamics of Julia component of $g_{0,\lambda}$ is encoded by $(\HH_{Q},w)$ in the following sense:
\begin{description}
	\item[(i)] every critical orbit accumulates the super-attracting fixed point $\infty$;
	\item[(ii)] there exists a homeomorphism $h:\JJ(g_{0,\lambda})\rightarrow\JJ(\HH_{Q})$ such that the following diagram commutes;
$$\xymatrix{
	\JJ(g_{0,\lambda}) \ar[rr]^{\textstyle g_{0,\lambda}} \ar[d]_{\textstyle h} && \JJ(g_{0,\lambda}) \ar[d]^{\textstyle h} \\
	\JJ(\HH_{Q}) \ar[rr]_{\textstyle Q} && \JJ(\HH_{Q})}$$
	\item[(iii)] for every Julia component $J\in\JJ(g_{0,\lambda})$, the restriction map $g_{0,\lambda}|_{J}$ has degree $w(e)$ where $e$ is the edge of $\HH_{Q}$ which contains $h(J)$.
\end{description}
\end{thm}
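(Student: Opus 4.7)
The plan is to translate the known structural description of $g_{0,\lambda}$ due to McMullen into a symbolic itinerary, then match it with the corresponding itinerary coding of $Q$ on the Hubbard tree. Part (i) is a direct computation: the $d_\infty+d_0$ free critical points are the roots of $z^{d_\infty+d_0}=d_0\lambda/d_\infty$, of magnitude $\sim|\lambda|^{1/(d_\infty+d_0)}$; at such a point $c$ one finds $g_{0,\lambda}(c)=\frac{d_\infty+d_0}{d_\infty}\cdot\frac{\lambda}{c^{d_0}}$, so $|g_{0,\lambda}(c)|\sim|\lambda|^{d_\infty/(d_\infty+d_0)}$; applying $g_{0,\lambda}$ once more (the pole term $\lambda/z^{d_0}$ dominates near $0$) yields $|g_{0,\lambda}^2(c)|\sim|\lambda|^{(d_\infty+d_0-d_\infty d_0)/(d_\infty+d_0)}$, which under (\ref{H0}) has negative exponent and blows up as $\lambda\to 0$. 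Hence every free critical orbit escapes to $\infty$, proving (i).

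With (i), $g_{0,\lambda}$ is hyperbolic and $A_\infty$ is the only attracting basin. Its immediate component $A_\infty^0\ni\infty$ is a topological disk, the trap door $T\ni 0$ is another disk, $g_{0,\lambda}^{-1}(A_\infty^0)=A_\infty^0\cup T$, and $g_{0,\lambda}|_T\to A_\infty^0$ has degree $d_0$. Write $J_0=\partial A_\infty^0$, $J_{-1}=\partial T$, and let $W=\CC\setminus\overline{A_\infty^0\cup T}$ (a topological annulus). I would then study $U:=g_{0,\lambda}^{-1}(T)$: the free critical points lie in $W$ (at scale $|\lambda|^{1/(d_\infty+d_0)}$, strictly between the $T$-scale $|\lambda|^{1/d_0}$ and the unit scale of $J_0$) and by (i) map into $T$, so $U\subset W$ absorbs all $d_\infty+d_0$ of them, and Riemann--Hurwitz gives $\chi(U)=(d_\infty+d_0)-(d_\infty+d_0)=0$. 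McMullen's perturbative analysis further ensures $U$ is a single connected annulus, so $W\setminus\overline{U}$ consists of two sub-annuli $W_\infty$ (adjacent to $J_0$) and $W_0$ (adjacent to $J_{-1}$), with $g_{0,\lambda}|_{W_\sigma}:W_\sigma\to W$ an unramified degree-$d_\sigma$ covering for $\sigma\in\{\infty,0\}$.

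Iterating this Markov structure, each Julia component $J\in\JJ(g_{0,\lambda})$ acquires a unique itinerary $\pi_\lambda(J)=(t_n)_{n\geq 0}\in\{e_\infty,e_0\}^{\N}$ recording which sub-annulus contains $g_{0,\lambda}^n(J)$; hyperbolicity (equivalently, the moduli of nested sub-annuli decay by the definite factor $d_\sigma\geq 2$ at each step, both values forced by (\ref{H0})) makes $\pi_\lambda$ a homeomorphism onto the full shift space that conjugates $g_{0,\lambda}$ to the one-sided shift, with $J_0$ and $J_{-1}$ assigned the tail sequences $(e_\infty)^{\N}$ and $e_0(e_\infty)^{\N}$. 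On the Hubbard tree, since $Q$ maps each edge homeomorphically onto $\HH_Q$, each $y\in\JJ(\HH_Q)$ similarly carries a unique itinerary $\pi_Q(y)$, and $\pi_Q$ is a conjugacy with the shift. Setting $h:=\pi_Q^{-1}\circ\pi_\lambda$ yields the homeomorphism in (ii), and (iii) follows since $h(J)\in e_\sigma$ forces $J\subset\overline{W_\sigma}$, so that $g_{0,\lambda}|_J$ is a factor of the degree-$d_\sigma$ covering $g_{0,\lambda}|_{W_\sigma}$ and hence has degree $d_\sigma=w(e_\sigma)$. The main obstacle is the structural assertion that $U$ is a single connected annulus and that the nested Markov partition shrinks uniformly for all small $\lambda$; this is the heart of McMullen's perturbative analysis under (\ref{H0}), which I would cite from \cite{AutomorphismsRationalMaps} and \cite{SingularPerturbations} rather than reprove.
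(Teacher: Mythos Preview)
Your proposal is correct and follows essentially the same approach as the paper: set up a two-piece Markov partition by annuli covered with degrees $d_\infty$ and $d_0$, encode the Julia components by itineraries in the full $2$-shift, do the same for $Q$ acting on $\JJ(\HH_Q)$, and compose the two codings to obtain $h$. The paper's own proof is only a sketch that cites \cite{AutomorphismsRationalMaps} for the existence of an annulus $A\supset J(g_{0,\lambda})$ whose preimage splits into two nested sub-annuli $A_\infty,A_0$ mapped as coverings of the required degrees, and then invokes standard symbolic dynamics; you supply more detail (the explicit critical-orbit estimate for (i), the Riemann--Hurwitz count for the preimage of the trap door, the modulus-decay argument for bijectivity of the itinerary map), but the architecture is the same, and you correctly identify and defer to McMullen the one genuinely nontrivial structural input, namely that the relevant preimage is a single connected annulus.
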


Notice that $\JJ(g_{0,\lambda})=\JJcrit(g_{0,\lambda})$ for $|\lambda|>0$ small enough since every Julia component is a Jordan curve which separates the fixed critical point $\infty$ from some critical values close to $0$.

\begin{proof}
We only sketch the proof since the main part is done in \cite{AutomorphismsRationalMaps}. Indeed it is shown that there exists a large annulus $A$ centered at 0 and containing $J(g_{0,\lambda})$ whose preimage consists of two disjoint annuli $A_{\infty},A_{0}$ both nested in $A$ and such that the restriction maps $g_{0,\lambda}|A_{\infty}:A_{\infty}\rightarrow A$ and $g_{0,\lambda}|A_{0}:A_{0}\rightarrow A$ are coverings of degree $d_{\infty}$ and $d_{0}$, respectively. Using combinatorial reasoning from holomorphic dynamics, it is a classical exercise to prove that the set of connected components of $J(g_{0,\lambda})=\cap_{n\geqslant 0}g_{0,\lambda}^{-n}(A)$ is homeomorphic to the space of all sequences of two digits $\Sigma_{2}=\{0,1\}^{\N}$ (equipped with the product topology making it a Cantor set) and the exchanging dynamics is topologically conjugated to a 2-to-1 shift map $\sigma:\Sigma_{2}\rightarrow\Sigma_{2}$ defined by $\sigma(s_{0},s_{1},s_{2},\dots)=(s_{1},s_{2},s_{3},\dots)$. The same holds for the dynamical system induced by $Q$ on $\JJ(\HH_{Q})$ since for $\varepsilon>0$ small enough the real segment $I=[\varepsilon,\frac{3}{2}-\varepsilon]$ contains $\JJ(\HH_{Q})$ and its preimage consists of two disjoint real segment both included in $I$ (one in each of the two edges of $\HH_{Q}$).
\end{proof}

Heuristically speaking, we may topologically think the Riemann sphere $\CC$ as a smooth neighborhood's boundary of the tree $\HH_{Q}$ embedded in the space $\R^{3}$. The two points on this topological sphere which correspond to $\infty$ and $0$ should be closed to the corresponding vertices of $\HH_{Q}$ which are $0$ and $\frac{3}{2}$, respectively. If the neighborhood becomes smaller and smaller, every Jordan curves in $J(g_{0,\lambda})$ is shrunk to a point in $J(\HH_{Q})$ (see Figure \ref{FigMcMullenTree}.c).


\subsection{Persian carpet example}\label{SubSecPersianCarpets}

Consider a quadratic polynomial of the form $P:z\mapsto z^{2}+c$ where the parameter $c\in\C$ is chosen in order that the critical point 0 is periodic of period 4. There are exactly six choices of such a parameter. Let us fix $c$ to be that one with the largest imaginary part, that is $c\approx-0.157+1.032i$. The postcritical points are denoted by $c_{k}=P^{k}(0)$ for every $k\in\{0,1,2,3\}$.
$$\xymatrix{
	&& c_{1} \ar@/_{1pc}/[lld]_{1:1} & \\
	c_{2} \ar@/_{2pc}/[rrrdd]_{1:1} & \alpha \ar@(dr,ur)[]_{1:1} && \\
	&& c_{0} \ar@/_{1pc}/[uu]_{2:1} & \\
	&&& c_{3} \ar@/_{1pc}/[lu]_{1:1}
}$$

Let $\HH_{P}$ be the Hubbard tree of $P$ (see Figure \ref{FigPersianCarpetTree}.b). As one-dimensional simplicial complex, $\HH_{P}$ may be described by a set of five vertices $\{c_{0},c_{1},c_{2},c_{3},\alpha\}$ where $\alpha$ is a fixed point of $P$ and the following four edges:
$$e_{0}=[\alpha,c_{0}]_{\HH_{P}};\ e_{1}=[\alpha,c_{1}]_{\HH_{P}};\ e_{2}=[\alpha,c_{2}]_{\HH_{P}};\ e_{3=}[c_{0},c_{3}]_{\HH_{P}}.$$
$P$ homeomorphically acts on the edges as follows.
$$\left\{\begin{array}{l}
	P(e_{0})=e_{1} \\
	P(e_{1})=e_{2} \\
	P(e_{2})=e_{0}\cup e_{3} \\
	P(e_{3})=e_{0}\cup e_{1}
\end{array}\right.$$
Denote by $\JJ(\HH_{P})$ the intersection set between the Hubbard tree $\HH_{P}$ and the Julia set $J(P)$. Notice that $\JJ(\HH_{P})$ is disconnected (actually a Cantor set) and $P$ induced a dynamical system on it. Moreover the fixed branching point $\alpha$ belongs to $\JJ(\HH_{P})$ but not to the boundary of any connected component of $\HH_{P}-\JJ(\HH_{P})$. Finally, let $w$ be a weight function on the set of edges of $\HH_{P}$, say $w(e_{k})=d_{k}$ where $d_{k}$ is a positive integer for every $k\in\{0,1,2,3\}$.

\begin{figure}[!htb]
	\begin{center}
	\includegraphics[width=\textwidth]{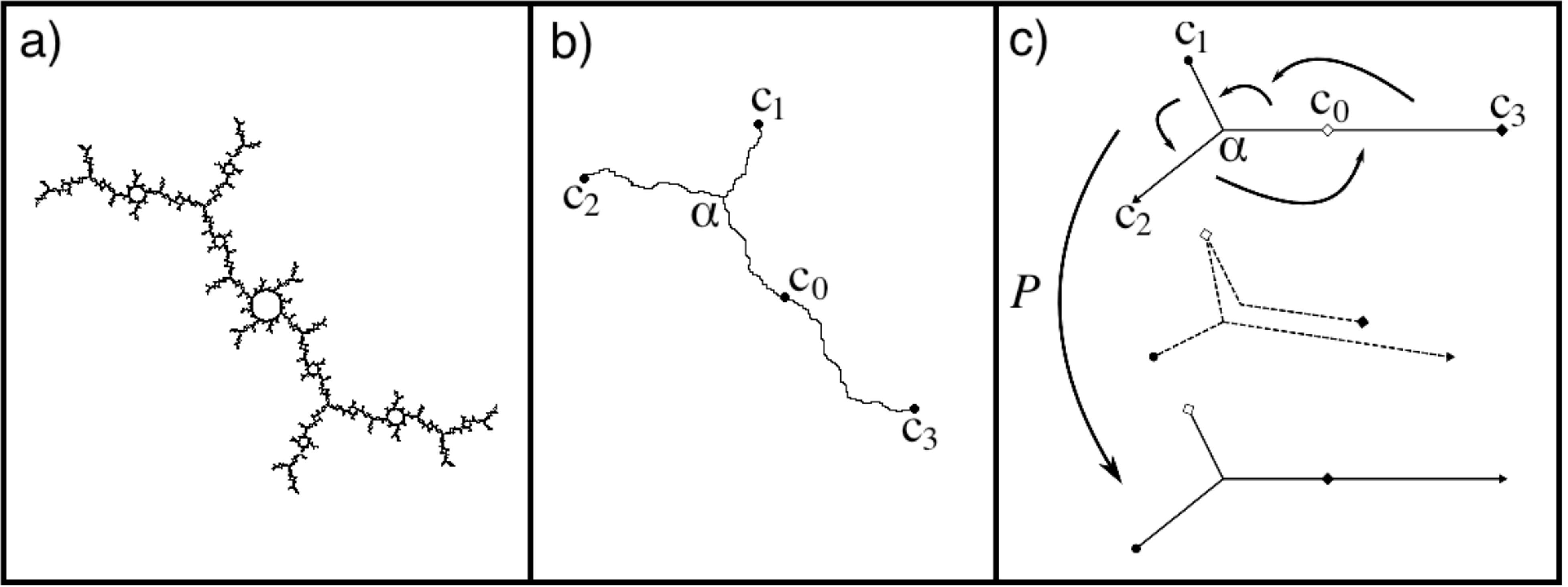}
	\caption{\textbf{a)} The Julia set of the polynomial $P$. \\ \textbf{b)} The Hubbard tree $\HH_{P}$. \\ \textbf{c)} The action of $P$ on a straightened copy of $\HH_{P}$.}
	\label{FigPersianCarpetTree}
	\end{center}
\end{figure}

\begin{defn}\label{DefTransitionMatrix}
The transition matrix of the weighted dynamical tree $(\HH_{P},w)$ is the $4$-by-$4$ matrix $M=(m_{i,j})_{i,j\in\{0,1,2,3\}}$ whose entries are defined as follows.
$$\forall i,j\in\{0,1,2,3\},\ m_{i,j}=\left\{\begin{array}{cl}
	\dfrac{1}{w(e_{i})} & \text{if}\ e_{j}\subset P(e_{i}) \\
	0 & \text{otherwise}
\end{array}\right.$$
Since $M$ is a non-negative matrix, it follows from Perron-Frobenius Theorem that the eigenvalue with the largest modulus is real and non-negative. Let us call $\lambda(\HH_{P},w)$ this leading eigenvalue. The weighted dynamical tree $(\HH_{P},w)$ is said to be \textbf{unobstructed} if $\lambda(\HH_{P},w)<1$.
\end{defn}

Let us give some remarks about this definition.
\begin{enumerate}
	\item This definition is strongly related to obstructions which occur in Thurston characterization of postcritically finite rational maps and all the theory behind (see \cite{ThurstonProof})
	\item When $(\HH_{P},w)$ is unobstructed, Perron-Frobenius Theorem and continuity of the spectral radius ensure the existence of a vector $V\in\R^{4}$ with positive entries such that $MV<V$. This remark will be useful later.
	\item Actually the transition matrix of $(\HH_{P},w)$ is given by
$$M=\left(\begin{array}{cccc}
	0 & \frac{1}{d_{0}} & 0 & 0 \\
	0 & 0 & \frac{1}{d_{1}} & 0 \\
	\frac{1}{d_{2}} & 0 & 0 & \frac{1}{d_{2}} \\
	\frac{1}{d_{3}} & \frac{1}{d_{3}} & 0 & 0
\end{array}\right)$$
and an easy computation shows that $\lambda(\HH_{P},w)$ is the largest root of
$$X^{4}-\left(\frac{1}{d_{0}d_{1}d_{2}}+\frac{1}{d_{1}d_{2}d_{3}}\right)X-\frac{1}{d_{0}d_{1}d_{2}d_{3}}.$$
Notice that if $\lambda(\HH_{P},w)\geqslant 1$ then $\lambda(\HH_{P},w)\leqslant\frac{1}{d_{0}d_{1}d_{2}}+\frac{1}{d_{1}d_{2}d_{3}}+\frac{1}{d_{0}d_{1}d_{2}d_{3}}$, thus $(\HH_{P},w)$ is unobstructed as soon as at least three of weights $d_{0}$, $d_{1}$, $d_{2}$, and $d_{3}$ are $\geqslant 2$. Conversely, if $(\HH_{P},w)$ is unobstructed then one can show by exhaustion that at least two of weights $d_{0}$, $d_{1}$, $d_{2}$, and $d_{3}$ are $\geqslant 2$.
	\item For the McMullen's example, the transition matrix of $(\HH_{Q},w)$ may be defined as well and we get
$$M=\left(\begin{array}{cc}
	\frac{1}{d_{\infty}} & \frac{1}{d_{\infty}} \\
	\frac{1}{d_{0}} & \frac{1}{d_{0}}
\end{array}\right).$$
An easy computation gives that $\lambda(\HH_{Q},w)=\frac{1}{d_{\infty}}+\frac{1}{d_{0}}$. Consequently the weighted dynamical tree $(\HH_{Q},w)$ is unobstructed if and only if the assumption (\ref{H0}) holds.
\end{enumerate}

The following result is analogous to Theorem \ref{ThmMcMullen}.

\begin{thm}\label{ThmMain1}
If the weighted dynamical tree $(\HH_{P},w)$ satisfies the two following conditions
\begin{equation}\label{H1}
	\widehat{d}=\frac{1}{2}(d_{0}+d_{1}+d_{2}-1)\ \text{is an integer}\geqslant 2\ \text{and}\ \max\{d_{0},d_{1},d_{2}\}\leqslant\widehat{d}
	\tag{H1}
\end{equation}
\begin{equation}\label{H2}
	(\HH_{P},w)\ \text{is unobstructed}
	\tag{H2}
\end{equation}
then there exists a rational map $f$ of degree $\widehat{d}+d_{3}$ such that the exchanging dynamics of postcritically separating Julia components of $f$ is encoded by $(\HH_{P},w)$ in the following sense:
\begin{description}
	\item[(i)] every critical orbit accumulates a super-attracting cycle $\{z_{0},z_{1},z_{2},z_{3}\}$ of period $4$;
	\item[(ii)] there exists a homeomorphism $h:\JJcrit(f)\rightarrow\JJ(\HH_{P})$ such that the following diagram commutes;
$$\xymatrix{
	\JJcrit(f) \ar[rr]^{\textstyle f} \ar[d]_{\textstyle h} && \JJcrit(f) \ar[d]^{\textstyle h} \\
	\JJ(\HH_{P}) \ar[rr]_{\textstyle P} && \JJ(\HH_{P})
}$$
	\item[(iii)] for every Julia component $J\in \JJcrit(f)$ such that $h(J)$ is not eventually mapped under iteration to the fixed branching point $\alpha$, the restriction map $f|_{J}$ has degree $w(e_{k})=d_{k}$ where $e_{k}$ is the edge of $\HH_{P}$ which contains $h(J)$.
\end{description}
\end{thm}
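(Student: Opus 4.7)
The plan is to construct $f$ by quasiconformal surgery, following the strategy sketched for $g_{0,\lambda}$ in the proof of Theorem \ref{ThmMcMullen} but with a genuinely branched ambient combinatorics dictated by $\HH_P$. First I would build a topological branched covering $F:\CC\to\CC$ of degree $\widehat{d}+d_3$ to serve as a combinatorial model for $f$: choose four pairwise disjoint closed disks $D_0,D_1,D_2,D_3$ on the sphere, declare $F$ to map each $D_k$ onto $D_{k+1\bmod 4}$ by a power map with appropriate local degree at a marked center $z_k$, and outside $\bigcup D_k$ declare $F$ to carry a nested annular system that thickens, level by level, the dynamics of $P$ on the edges $e_0,e_1,e_2,e_3$ and reproduces it faithfully on $\JJ(\HH_P)$. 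The integrality and inequality in (\ref{H1}) are exactly the Riemann--Hurwitz-type bookkeeping required for such a branched cover to exist: they force the local degrees at the super-attracting cycle points, together with the edge-weights $d_0,d_1,d_2,d_3$, to balance into the total degree $\widehat{d}+d_3$ and to yield a realizable branching recipe around the fixed Julia component that will correspond to $\alpha$.

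Next I would promote $F$ to a holomorphic dynamical system through the measurable Riemann mapping method. Near each $z_k$ the model can be made holomorphic outright, in power-map coordinates; on the annular pieces where $F$ is an unramified covering, the covering structure already carries a natural conformal structure. What remains is a quasiconformal interpolation across a finite number of ``gluing'' annuli and the verification that the resulting Beltrami coefficient $\mu$, spread by the standard $F$-pullback procedure, satisfies $\|\mu\|_\infty<1$. This dilatation estimate is the heart of the argument and the step I expect to be the main obstacle. It reduces to a uniform control of the moduli of the annular levels produced by iterated pullback, and these moduli transform, at each step, exactly by the transition matrix $M$ of Definition \ref{DefTransitionMatrix}. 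Assumption (\ref{H2}), $\lambda(\HH_P,w)<1$, supplies via Perron--Frobenius a positive vector $V$ with $MV<V$ (as noted after Definition \ref{DefTransitionMatrix}); comparing the vector of moduli at level $n$ to an appropriate scalar multiple of $V$ yields the required uniform bound by induction on $n$.

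Once $\|\mu\|_\infty<1$ is established, the Measurable Riemann Mapping Theorem furnishes a quasiconformal homeomorphism $\phi:\CC\to\CC$ such that $f:=\phi\circ F\circ\phi^{-1}$ is holomorphic, hence rational of degree $\widehat{d}+d_3$. The required properties then follow from the geometry of the nested pullback system. Property (i) is immediate: the cycle $\{\phi(z_0),\dots,\phi(z_3)\}$ is super-attracting for $f$ and catches every critical orbit by construction. For (ii) and (iii), each postcritically separating Julia component of $f$ corresponds, just as in Theorem \ref{ThmMcMullen}, to a unique infinite itinerary in the symbolic coding dictated by the tree-edge combinatorics of $\HH_P$; the same coding produces $\JJ(\HH_P)$ on the polynomial side, so the itinerary map is a homeomorphism $h:\JJcrit(f)\to\JJ(\HH_P)$ intertwining $f$ and $P$. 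The degree of $f|_J$ on a component $J$ whose image under $h$ lies in the interior of a single edge $e_k$ is $d_k$ by design of the annular model; the components whose $h$-images are eventually mapped to the branching vertex $\alpha$ are excluded in (iii) precisely because the covering degree there accumulates contributions from several edges and has to be treated separately in Section \ref{SecProperties}.
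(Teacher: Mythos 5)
Your high-level plan (build a combinatorial branched-cover model, make it holomorphic by surgery, then code Julia components by itineraries) is in the right family, but there are two concrete gaps that would stop the argument as written.

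First, the step where you invoke (\ref{H2}) to control the dilatation is a misattribution. You claim $\|\mu\|_\infty<1$ ``reduces to a uniform control of the moduli of the annular levels produced by iterated pullback,'' these moduli ``transform by $M$,'' and Perron--Frobenius plus $MV<V$ gives the bound by induction. But the complex dilatation of a pullback Beltrami coefficient is not governed by moduli of annuli, and iterated pullback of a structure that is merely quasiconformal on some region will in general have unbounded dilatation unless orbits leave the quasiconformal region after a uniformly bounded number of steps. The paper handles this via Shishikura's fundamental lemma (Lemma \ref{LemQuasiconformalSurgery}): it exhibits an open set $E$ with $F(E)\subset E$, $F$ holomorphic on $E$, and $F^{2}(\overline{Q})\subset E$ where $Q$ is the quasiconformal interpolation region (Lemma \ref{LemQcExtension}), so each orbit passes through $Q$ at most once and the dilatation is bounded a priori, independently of any eigenvalue. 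Assumption (\ref{H2}) is used much earlier and for a different purpose: in Lemma \ref{LemEquip} the vector $V$ with $MV<V$ (after adding a constant correction coming from the inverse Gr\"otzsch inequality of Lemma \ref{LemGrotzch}) is what makes the linear system of modulus inequalities (\ref{EqSysIneq}) solvable, i.e.\ it guarantees there is \emph{room} inside $B(\widehat{z_{0}})$ to cut the required system of equipotentials. It is a geometric feasibility condition for the cutting, not a dilatation estimate. Your ``induction on levels'' gesture will not produce $\|\mu\|_\infty<1$ without an invariant-set escape argument of Shishikura type.

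Second, the topological model you propose — four pairwise disjoint disks $D_{0},\dots,D_{3}$ mapped cyclically by power maps, with an annular system outside — does not account for the branching vertex $\alpha$. The tree $\HH_{P}$ has a fixed vertex $\alpha$ of valence three that lies in $\JJ(\HH_{P})$, and the corresponding object in $J(f)$ is a fixed Julia component $J_{\alpha}$ that must be quasiconformally homeomorphic to the \emph{connected} Julia set of a degree-$\widehat{d}$ rational map with a super-attracting $3$-cycle of prescribed local degrees $(d_{0},d_{1},d_{2})$. This piece is not captured by a union of disks and annuli; the paper builds it explicitly as a rational map $\widehat{f}$ (Lemma \ref{LemBranchingMap}) via a Hurwitz realizability argument (Lemma \ref{LemHurwitz}), and this is precisely where (\ref{H1}) is used: $\widehat{d}=\frac12(d_{0}+d_{1}+d_{2}-1)$ is Riemann--Hurwitz for $\widehat{f}$ and $\max\{d_{0},d_{1},d_{2}\}\leqslant\widehat{d}$ is the realizability bound. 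Calling this ``Riemann--Hurwitz bookkeeping'' without building $\widehat{f}$ (and then performing the folding surgery of Lemma \ref{LemFolding} inside $B(\widehat{z_{0}})$ to turn the period-$3$ cycle into a period-$4$ one) leaves out the central object of the construction. To repair the proposal you would need to: (a) first realize $\widehat{f}$ by solving the Hurwitz problem under (\ref{H1}); (b) cut $B(\widehat{z_{0}})$ by equipotentials whose moduli solve a linear system made feasible by (\ref{H2}); (c) fold an annulus onto a disk with degrees $(d_{0},d_{3})$ and realize the preimage of $J_{\alpha}$; and (d) establish the Shishikura invariant-set property rather than a Perron--Frobenius dilatation estimate.
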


The same heuristic as for Theorem \ref{ThmMcMullen} still holds: we may topologically think the Riemann sphere $\CC$ as a smooth neighborhood's boundary of the tree $\HH_{P}$ embedded in the space $\R^{3}$. The action of $f$ on this topological sphere follows that one of the dynamical tree $\HH_{P}$. The points on this topological sphere which correspond to the points in the super-attracting periodic cycle $\{z_{0},z_{1},z_{2},z_{3}\}$ should be closed to the corresponding vertices $\{c_{0},c_{1},c_{2},c_{3}\}$ of $\HH_{P}$, and every Julia component in $\JJcrit(f)$ closely surrounds a corresponding point in $\JJ(\HH_{P})$.

\begin{thm}\label{ThmMain2}
Under assumptions (\ref{H1}) and (\ref{H2}) there exists a rational map $f$ satisfying Theorem \ref{ThmMain1} and such that $J(f)$ contains buried Julia components of several types: \\
\begin{tabularx}{\textwidth}{rX}
	\emph{(point type)} & uncountably many points; \\
	\emph{(circle type)} & uncountably many Jordan curves; \\
	\emph{(complex type)} & countably many preimages of a fixed Julia component lying over the fixed branching point $\alpha$, say $J_{\alpha}=h^{-1}(\alpha)\in\JJ(f)$, which is quasiconformally homeomorphic to the connected Julia set of a rational map $\widehat{f}$.
\end{tabularx} \\
Moreover $\widehat{f}$ has degree $\widehat{d}$ and has only one critical orbit which is a super-attracting cycle $\{\widehat{z_{0}},\widehat{z_{1}},\widehat{z_{2}}\}$ of period $3$ such that the local degree of $\widehat{f}$ at $\widehat{z_{k}}$ is $d_{k}$ for every $k\in\{0,1,2\}$.
\end{thm}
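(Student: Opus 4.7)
The plan is to exploit the conjugacy $h$ from Theorem~\ref{ThmMain1} and classify the points of the Cantor set $\JJ(\HH_P)$ according to how their $P$-orbit interacts with the fixed branching point $\alpha$. Because $\alpha$ is a repelling fixed point of $P$ lying in $\JJ(\HH_P)$ but not on the boundary of any component of $\HH_P \setminus \JJ(\HH_P)$, its set of iterated preimages $\Omega_\alpha := \bigcup_{n\geq 0} P^{-n}(\alpha) \cap \JJ(\HH_P)$ is countable and dense in $\JJ(\HH_P)$. I would partition
$\JJ(\HH_P) = A_{\mathrm{complex}} \sqcup A_{\mathrm{point}} \sqcup A_{\mathrm{circle}}$, where $A_{\mathrm{complex}} := \Omega_\alpha$, $A_{\mathrm{point}} := \{x \in \JJ(\HH_P) \setminus \Omega_\alpha \,:\, \alpha \text{ is a limit point of } (P^n(x))_{n\geq 0}\}$, and $A_{\mathrm{circle}}$ is the rest. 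A standard symbolic-dynamics argument on $\JJ(\HH_P)$ (analogous to the two-digit shift used in the proof of Theorem~\ref{ThmMcMullen}, upgraded to the larger alphabet dictated by the edges of $\HH_P$) shows that both $A_{\mathrm{point}}$ and $A_{\mathrm{circle}}$ are uncountable; their $h$-preimages provide the three families claimed in Theorem~\ref{ThmMain2}.

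For the \emph{complex type}, the fixed Julia component $J_\alpha := h^{-1}(\alpha)$ is periodic, hence by the theorem of~\cite{AutomorphismsRationalMaps} recalled in the introduction it is quasiconformally homeomorphic to $J(\widehat{f})$ for some rational map $\widehat{f}$ with connected Julia set. To identify $\widehat{f}$ precisely, I would read off the local dynamics near $J_\alpha$: the three edges $e_0, e_1, e_2$ meeting at $\alpha$ are cyclically permuted by $P$ with weights $d_0, d_1, d_2$, so on a suitable neighborhood of $J_\alpha$ the map $f$ admits a rational-like renormalization with exactly three marked critical points forming a super-attracting $3$-cycle of local degrees $d_0, d_1, d_2$. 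Straightening this renormalization---or, more directly, reading it off from the building blocks of the quasiconformal surgery of Section~\ref{SecConstruction}---gives $\widehat{f}$, and Riemann--Hurwitz then forces $\deg\widehat{f} = \widehat{d} = (d_0+d_1+d_2-1)/2$, matching~(\ref{H1}). The countably many iterated preimages of $J_\alpha$ under $f$ inherit, by pullback with local quasiconformal charts, the same topological model as $J_\alpha$.

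For the \emph{circle} and \emph{point} types I would follow the McMullen heuristic. If $x \in A_{\mathrm{circle}}$, the orbit $(P^n(x))$ stays at bounded distance from $\alpha$, so $h^{-1}(x)$ is sandwiched between nested annuli (coming from the covering structure of the surgery of Section~\ref{SecConstruction}) whose moduli are uniformly bounded below; passing to the limit exhibits $h^{-1}(x)$ as a Jordan curve as in~\cite{AutomorphismsRationalMaps}. If $x \in A_{\mathrm{point}}$, the orbit accumulates $\alpha$, so $h^{-1}(x)$ is enclosed in arbitrarily small preimages of $J_\alpha$; the unobstructedness assumption~(\ref{H2}), exploited through a positive eigenvector $V$ of the transition matrix satisfying $MV<V$, yields a uniform geometric contraction of the diameters of these nested preimages, forcing $h^{-1}(x)$ to be a single point. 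Burriedness in all three cases follows because every Fatou-domain boundary of $f$ is a Jordan curve that $h$-corresponds to a point of $\HH_P$ arbitrarily close to one of the endpoint-vertices $c_0, c_1, c_2, c_3$, hence lying outside $\JJ(\HH_P)$, and so cannot meet any $h^{-1}(x)$ with $x\in\JJ(\HH_P)$.

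The main obstacle is the precise identification of $\widehat{f}$ in the second paragraph: upgrading the purely abstract existence statement of~\cite{AutomorphismsRationalMaps} to a concrete rational map of the claimed degree, with exactly three critical points forming a super-attracting $3$-cycle whose local degrees match $(d_0,d_1,d_2)$. This will rely on carefully tracking the combinatorial data fed into the surgery of Section~\ref{SecConstruction} rather than on invoking a black-box straightening. A secondary delicate step is proving the dichotomy between $A_{\mathrm{point}}$ and $A_{\mathrm{circle}}$ with correct moduli estimates, which requires combining the expansion of $f$ away from the super-attracting $4$-cycle with the geometric contraction supplied by~(\ref{H2}).
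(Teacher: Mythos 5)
Your plan has a fundamental conceptual error in the treatment of the point type. The homeomorphism $h$ of Theorem~\ref{ThmMain1} maps $\JJcrit(f)$ onto $\JJ(\HH_{P})$, and $\JJcrit(f)$ consists by definition of postcritically \emph{separating} Julia components. A singleton $\{p\}$ can never separate $\CC$, so $h^{-1}(x)$ is never a point, no matter how the orbit of $x$ behaves. Your set $A_{\mathrm{point}}$ is indeed uncountable, but the components $h^{-1}(x)$ for $x\in A_{\mathrm{point}}$ are Jordan curves, not points: the paper's own proof of the circle-type statement makes exactly this observation, citing~\cite{RationalMapsDisconnectedJuliaSet} and noting that a wandering Julia component of a hyperbolic map in $\JJcrit(f)$ must be a Jordan curve ``since a point is obviously not postcritically separating.'' The point-type buried components therefore live \emph{outside} $\JJcrit(f)$, and the conjugacy $h$ cannot see them. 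In the paper they come from the non-essential preimage annuli $A'_{3,3}\subset A_{3}$ produced in Lemma~\ref{LemLastStep1}: one chooses two disjoint topological disks $D_{0},D_{1}\subset D(\beta_{3}^{+})$ on which $f^{5}$ acts as a homeomorphism onto $D(\beta_{3}^{+})$, and the resulting non-escaping set is a Cantor set of singletons; burriedness follows because $A_{3}\subset D(\beta_{3}^{+})$ contains infinitely many postcritically separating Jordan curves accumulating each singleton.

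Your burriedness argument is also unsupported. You assert that every Fatou-domain boundary is a Jordan curve that $h$-corresponds to a point near one of $c_{0},\dots,c_{3}$; but Fatou-domain boundaries are not generally Jordan curves here (and need not lie in $\JJcrit(f)$ at all, so the appeal to $h$ is not available). The paper instead proves burriedness by three separate ad hoc arguments: for points, by nesting Jordan curves from $A_{3}$; for circles, by a simple cardinality count (countably many Fatou domains, each with countably many Jordan-curve boundary components, versus uncountably many wandering Jordan curves in $\JJcrit(f)$); for $J_{\alpha}$, by checking in each connected component of $\CC-J_{\alpha}$ that $J_{\alpha}$ is accumulated by other Julia components.

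Finally, the identification of $\widehat{f}$ through a renormalization argument is an unnecessary detour: the construction in Section~\ref{SecConstruction} \emph{starts} from $\widehat{f}$ (Lemma~\ref{LemBranchingMap}), keeps $f=\widehat{f}$ on a neighborhood of $J(\widehat{f})$, and the quasiconformal map produced by Lemma~\ref{LemQuasiconformalSurgery} carries $J(\widehat{f})$ onto $J_{\alpha}$ directly. Since you are proving Theorem~\ref{ThmMain2} for the very map built in Section~\ref{SecConstruction}, you may simply read off the conjugacy from the surgery rather than re-derive it via straightening.
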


Let us give some comments about these results.
\begin{enumerate}
	\item The rational map $\widehat{f}$ corresponds to the dynamics of $f$ on the fixed Julia component $J_{\alpha}$ lying over the fixed branching point $\alpha$. More precisely, there is a quasiconformal map $\varphi$ from a neighborhood of $J(\widehat{f})$ onto a neighborhood of $J_{\alpha}$ such that $\varphi\circ\widehat{f}=f\circ\varphi$ (see the construction of $f$ in Section \ref{SecConstruction}).
 	\item The rational map $\widehat{f}$ may also be seen as encoded by a weighted dynamical tree. Consider the quadratic polynomial $R:z\mapsto z^{2}+\widehat{c}$ where $\widehat{c}\in\C$ is the parameter with the largest imaginary part such that the critical point 0 is periodic of period 3, that is $\widehat{c}\approx-0.123+0.745i$ ($J(R)$ is known as the Douady's rabbit). The Hubbard tree $\HH_{R}$ of $R$ is described by a set of four vertices $\{\widehat{c_{0}},\widehat{c_{1}},\widehat{c_{2}},\widehat{\alpha}\}$ where $\widehat{c_{k}}=R^{k}(0)$ and $\widehat{\alpha}$ is a fixed point of $R$, and three edges of the form $\widehat{e_{k}}=[\widehat{\alpha},\widehat{c_{k}}]_{\HH_{R}}$ for every $k\in\{0,1,2\}$. Consider the weight function $w$ defined by $w(\widehat{e_{k}})=d_{k}$ for every $k\in\{0,1,2\}$. Then the weighted dynamical tree $(\HH_{R},w)$ encodes the action of $\widehat{f}$ in the same setting as in Theorem \ref{ThmMcMullen} and Theorem \ref{ThmMain1}. Notice that the intersection set between $\HH_{R}$ and $J(R)$ is reduced to $\JJ(\HH_{R})=\{\widehat{\alpha}\}$, that corresponds to the unique Julia component in $\JJ(\widehat{f})=\JJcrit(\widehat{f})=\{J(\widehat{f})\}$. Finally, remark that the weighted dynamical tree $(\HH_{R},w)$ is unobstructed as soon as assumption (\ref{H1}) holds (actually $\lambda(\HH_{R},w)=\frac{1}{d_{0}d_{1}d_{2}}$).
	\item The rational map $\widehat{f}$ is unique up to conjugation by a Möbius map or equivalently it is unique as soon as its critical orbit $\{\widehat{z_{0}},\widehat{z_{1}},\widehat{z_{2}}\}$ is fixed in $\CC$ (see Lemma \ref{LemBranchingMap}). However, the rational map $f$ is not unique since the critical points which do not belong to the super-attracting periodic cycle $\{z_{0},z_{1},z_{2},z_{3}\}$ (but whose orbits accumulate it) may be perturbed in some neighborhoods without changing the exchanging dynamics and the topology of Julia components.
	\item The rational map $f$ is not postcritically finite since $J(f)$ is disconnected (but it is hyperbolic from point \textbf{(i)} in Theorem \ref{ThmMain1}). In particular Thurston characterization of postcritically finite rational maps (see \cite{ThurstonProof}) can not be used to prove the existence of $f$. However one could use the works of Tan Lei and Cui Guizhen about sub- hyperbolic semi-rational maps in \cite{ThurstonSubHyperbolic} but this paper presents a more explicit and more constructive method by quasiconformal surgery (see Section \ref{SecConstruction}).
	\item The assumption (\ref{H1}) is necessary. Indeed it is the smallest requirement such that there exists a topological model for $\widehat{f}$, that is a branched covering combinatorially equivalent to $\widehat{f}$ (see Lemma \ref{LemHurwitz} and proof of Lemma \ref{LemBranchingMap}).
	\item The assumption (\ref{H2}) is necessary. Otherwise we can find a Thurston obstruction, that is to say a multicurve $\Gamma$ whose transition matrix is equal to $M$ with leading eigenvalue $\lambda(\Gamma)=\lambda(\HH_{P},w)\geqslant 1$. According to a result of Curtis T. McMullen in \cite{McMullenBook} it follows that $\lambda(\Gamma)=1$ and at least one curve in $\Gamma$ is contained in an union of Fatou domains where $f$ is biholomorphically conjugated to a rotation. That is a contradiction since every critical orbit of $f$ accumulates a super-attracting periodic cycle.
\end{enumerate}


\section{Construction}\label{SecConstruction}

The aim of this section is to construct by quasiconformal surgery (we refer readers to \cite{QuasiconformalSurgeryBook} for a comprehensive treatment on this powerful method) a rational map $f$ which sastifies Theorem \ref{ThmMain1} and Theorem \ref{ThmMain2}. The strategy is to start from a rational map $\widehat{f}$ whose Julia set corresponds to the branching point $\alpha$ in $\HH_{P}$ (see Theorem \ref{ThmMain2}) and then to modify this map in order to create a folding corresponding to the critical point $c_{0}$.

\subsection{The branching map $\widehat{f}$}

The first step of the construction is to prove the existence of the rational map $\widehat{f}$ which appears in Theorem \ref{ThmMain2}. This is done by Lemma \ref{LemBranchingMap} below.

\begin{lem}\label{LemBranchingMap}
If assumption (\ref{H1}) holds then there exists a rational map $\widehat{f}:\CC\rightarrow\CC$ of degree $\widehat{d}$ such that:
\begin{description}
	\item[(i)] $\widehat{f}$ has only one critical orbit which is a super-attracting cycle $\{\widehat{z_{0}},\widehat{z_{1}},\widehat{z_{2}}\}$ of period 3 such that the local degree of $\widehat{f}$ at $\widehat{z_{k}}$ is $d_{k}$ for every $k\in\{0,1,2\}$;
	\item[(ii)] $J(\widehat{f})$ is connected and the Fatou set $\CC-J(\widehat{f})$ has infinitely many connected components which are simply connected.
\end{description}
Moreover $\widehat{f}$ is unique up to conjugation by a Möbius map.
\end{lem}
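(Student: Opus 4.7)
My plan is to construct $\widehat{f}$ as the unique rational realization of a postcritically finite topological branched cover, via Thurston's theorem. I would organize the argument in three steps.

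Step 1 (topological model). I would first fix three distinct points $\widehat{z_0}, \widehat{z_1}, \widehat{z_2}$ on $\SS$. Condition (\ref{H1}) is exactly the Riemann--Hurwitz compatibility $\sum_k (d_k - 1) = 2\widehat{d} - 2$ together with the local-to-global bound $d_k \leqslant \widehat{d}$. The Hurwitz existence theorem (Lemma \ref{LemHurwitz}) then produces an orientation-preserving branched covering $F : \SS \to \SS$ of degree $\widehat{d}$ whose critical points are the $\widehat{z_k}$, with local degrees $d_k$, and satisfying $F(\widehat{z_k}) = \widehat{z_{k+1 \bmod 3}}$. In particular $F$ is postcritically finite with postcritical set $P_F = \{\widehat{z_0}, \widehat{z_1}, \widehat{z_2}\}$.

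Step 2 (Thurston realization). Since $|P_F| = 3$, the thrice-punctured sphere $\SS \setminus P_F$ contains no essential non-peripheral simple closed curves, so the only multicurve is empty and $F$ has no Thurston obstruction. Moreover $P_F$ is purely periodic with nontrivial ramification along the cycle, so every orbifold weight $\nu(\widehat{z_k})$ is infinite; in particular the orbifold Euler characteristic equals $-1 < 0$, i.e., the orbifold is hyperbolic. Thurston's theorem (\cite{ThurstonProof}) together with its rigidity part then yields a rational map $\widehat{f} : \CC \to \CC$ of degree $\widehat{d}$ combinatorially equivalent to $F$, unique up to Möbius conjugacy. This is where uniqueness comes from.

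Step 3 (Julia set properties). The combinatorial equivalence transfers the critical data from $F$ to $\widehat{f}$, giving \textbf{(i)}: the only critical points are the $\widehat{z_k}$, with local degrees $d_k$, forming a super-attracting cycle of period $3$. In particular $\widehat{f}$ is hyperbolic and every critical point lies in an immediate basin $U_k$ of the cycle. Each $U_k$ is simply connected by Böttcher's theorem applied to the return map $\widehat{f}^3|_{U_k}$; since no critical point lies outside the cycle, a Riemann--Hurwitz induction on preimage depth shows that every Fatou component of $\widehat{f}$ is simply connected. By a classical criterion (\cite{MilnorBook}), $J(\widehat{f})$ is therefore connected. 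Finally, assumption (\ref{H1}) rules out the case $d_0 = d_1 = d_2 = \widehat{d}$, which would force $\widehat{d} = 1$; so some $d_k < \widehat{d}$, and the full preimage $\widehat{f}^{-1}(U_{k+1})$ strictly contains $U_k$, producing Fatou components outside the $3$-cycle. Iterating $\widehat{f}^{-1}$ on these new components yields infinitely many simply connected Fatou components, completing \textbf{(ii)}.

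The main obstacle. The proof essentially reduces to Thurston's theorem, but both hypotheses (absence of Thurston obstruction and hyperbolicity of the orbifold) are automatic here because $|P_F| = 3$, so there is no hard combinatorial verification. The only real bookkeeping is the Riemann--Hurwitz induction in Step 3 ensuring that every Fatou component is simply connected.
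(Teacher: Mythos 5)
Your proposal is correct in spirit but takes a genuinely different route from the paper, and it has one gap worth flagging.

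\emph{Comparison of routes.} You invoke Thurston's theorem for both existence and uniqueness. The paper deliberately avoids Thurston's machinery and instead exploits the fact that $|P_F| = 3$: a sphere marked at three points has trivial moduli, so once Lemma~\ref{LemHurwitz} gives a topological branched covering $H$, the complex structure pulled back by $\varphi\circ H$ can be uniformized by a homeomorphism $\widetilde{\varphi}$, and one can postcompose $\widetilde{\varphi}$ by a M\"obius map to pin the three critical points to the chosen $\widehat{z_k}$. This makes $\widehat{f}=\varphi\circ H\circ\widetilde{\varphi}^{-1}$ rational with the required local data, with no appeal to the obstruction theory at all. For uniqueness, rather than Thurston rigidity, the paper observes that two candidate maps $\widehat{f}$ and $\widehat{g}$ with the same cycle and local degrees yield a rational function $\widehat{f}-\widehat{g}$ of degree at most $2\widehat{d}$ which vanishes at each $\widehat{z_k}$ to order $d_k$, hence at $d_0+d_1+d_2 = 2\widehat{d}+1$ points counted with multiplicity, forcing $\widehat{f}-\widehat{g}\equiv 0$. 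Your route via Thurston is perfectly valid (your observations that no multicurve exists and that the orbifold is hyperbolic are both correct and automatic here), but it is heavier machinery than the statement requires; the paper's elementary arguments also produce a slightly sharper uniqueness statement (equality as maps once the cycle is fixed, not only up to M\"obius conjugacy). Your handling of part (ii) is a more detailed version of the paper's one-line remark that non-periodic Fatou components map homeomorphically onto the basins, and it is fine.

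\emph{The gap.} Lemma~\ref{LemHurwitz} is stated under condition~(\ref{HurwitzCondition}), which in the special case $n=3$, $d_{i,j}=1$ for $j\geqslant 2$ forces $d_{i,1}\geqslant 2$ for every $i$. Your Step 1 applies it with $d_{i,1}=d_{i-1}$, so it silently assumes $d_0,d_1,d_2\geqslant 2$. But assumption~(\ref{H1}) permits $d_k=1$ for one index $k$, in which case the other two weights equal $\widehat{d}$ --- and this is precisely the case $(d_0,d_1,d_2)=(1,2,2)$ used in the Persian carpet example of Section~\ref{SecFormula}. The paper treats this degenerate case separately at the start of the proof: when $d_0=1$ the map $\widehat{f}$ is explicitly a M\"obius conjugate of $z\mapsto z^{\widehat{d}}$, and Lemma~\ref{LemHurwitz} is not needed. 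You should add a parallel disclaimer, or note that when some $d_k=1$ the topological model is simply $z\mapsto z^{\widehat{d}}$ pre- and post-composed with homeomorphisms, after which your Steps 2 and 3 go through unchanged.
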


There are many ways to prove the existence of $\widehat{f}$ (for instance by ``blowing up'' the edges of some triangle invariant by a Möbius map, see \cite{CombiningRationalMaps}). Here we give a simple proof provided a particular solution of the Hurwitz problem (see Section \ref{SecAppendix}).

\begin{proof}
Up to conjugation by a Möbius map, we may fix three distinct points $\widehat{z_{0}}$, $\widehat{z_{1}}$, and $\widehat{z_{2}}$ in $\CC$. Remark that if at least one of integers $d_{0}$, $d_{1}$, and $d_{2}$ is equal to $1$, says $d_{0}=1$, then assumption (\ref{H1}) leads to $d_{1}=d_{2}=\widehat{d}$ and the rational map $\widehat{f}=\varphi\circ(z\mapsto z^{\widehat{d}})\circ\widetilde{\varphi}^{-1}$ where $\varphi$ and $\widetilde{\varphi}$ are two Möbius maps such that
$$\begin{array}{cccc}
	& \widetilde{\varphi}(1)=\widehat{z_{0}}, & \widetilde{\varphi}(0)=\widehat{z_{1}}, & \widetilde{\varphi}(\infty)=\widehat{z_{2}}, \\
	\text{and} & \varphi(1)=\widehat{z_{1}}, & \varphi(0)=\widehat{z_{2}}, & \varphi(\infty)=\widehat{z_{2}},
\end{array}$$
satisfies \textbf{(i)}. Consequently we may assume that $d_{0}$, $d_{1}$, and $d_{2}$ are $\geqslant 2$.

If follows that we may apply Lemma \ref{LemHurwitz} since assumption (\ref{H1}) easily implies condition (\ref{H1'}) for the abstract branch data coming from $d=\widehat{d}$, and $d_{i,1}=d_{i-1}$ for every $i\in\{1,2,3\}$. We get a degree $\widehat{d}$ branched covering $H:\SS\rightarrow\SS$ and three distinct points $x_{1,1}$, $x_{2,1}$, and $x_{3,1}$ in $\SS$ such that the local degree of $H$ at $x_{i,1}$ is $d_{i-1}$ for every $i\in\{1,2,3\}$ and $H$ has no more critical points than $x_{1,1}$, $x_{2,1}$, and $x_{3,1}$. Let $\varphi:\SS\rightarrow\CC$ be any homeomorphism such that $\varphi(H(x_{i,1}))=\widehat{z_{i}}$ for every $i\in\{1,2,3\}$. Remark that the map $\varphi\circ H:\SS\rightarrow\CC$ induces a complex structure on $\SS$. In other words, the uniformization theorem gives a homeomorphism $\widetilde{\varphi}:\SS\rightarrow\CC$ such that the map $\widehat{f}=\varphi\circ H\circ\widetilde{\varphi}^{-1}$ is holomorphic on $\CC$ and thus a rational map of degree $\widehat{d}$. Moreover, up to postcomposition with a Möbius map, we may assume that $\widetilde{\varphi}(x_{i,1})=\widehat{z_{i-1}}$ for every $i\in\{1,2,3\}$ so that $\widehat{f}$ satisfies \textbf{(i)}.

Now remark that for every $k\in\{0,1,2\}$, the connected component containing $\widehat{z_{k}}$ of the super-attracting basin of $\widehat{f}$ is simply connected since it contains at most one critical point. Moreover, any other Fatou component is eventually mapped by homeomorphisms onto one of these simply connected components. It follows that $\widehat{f}$ satisfies \textbf{(ii)}.

Finally let $\widehat{g}$ be another rational map of degree $\widehat{d}$ which satisfies \textbf{(i)} and \textbf{(ii)} for the same super-attracting periodic cycle $\{\widehat{z_{0}},\widehat{z_{1}},\widehat{z_{2}}\}$. Then $z\mapsto\widehat{f}(z)-\widehat{g}(z)$ is a rational map of degree at most $2\widehat{d}$ for which 0 has at least $d_{0}+d_{1}+d_{2}=2\widehat{d}+1$ preimages counted with multiplicity (every $\widehat{z_{k}}$ is a preimage of 0 with multiplicity $d_{k}$). Consequently this map is identically equal to 0, that is $\widehat{f}=\widehat{g}$.
\end{proof}

Notice that the previous proof strongly uses the fact that the postcritical set contains only three points. Indeed if the postcritical set contains more than three points, there is still an uniformization map $\widetilde{\varphi}$ for $\SS$ equipped with the complex structure coming from $\varphi\circ H$, but that may not be possible to postcompose $\widetilde{\varphi}$ with a Möbius map so that $\widehat{f}$ satisfies \textbf{(i)}. In fact we would also need to check that the branched covering $H$ has no Thurston obstructions (see \cite{ThurstonProof}).

\subsection{Cutting along a system of equipotentials}

Starting with the map $\widehat{f}$ coming from Lemma \ref{LemBranchingMap}, we need to divide $\CC$ into several pieces on which the map $f$ (or more precisely a quasiregular map $F$) will be piecewisely defined. This partition comes from a certain system of equipotentials of $\widehat{f}$ defined in Lemma \ref{LemEquip} below.

For every $k\in\{0,1,2\}$, denote by $B(\widehat{z_{k}})$ the connected component containing $\widehat{z_{k}}$ of the super-attracting basin of $\widehat{f}$. Recall that each $B(\widehat{z_{k}})$ is a marked hyperbolic disk. More precisely, Böttcher's Theorem provides Riemann mappings $\phi_{k}:\D\rightarrow B(\widehat{z_{k}})$ (namely biholomorphic maps from the open unit disk $\D$ onto $B(\widehat{z_{k}}$) such that $\phi_{k}(0)=\widehat{z_{k}}$ and the following diagram commutes.
$$\xymatrix{
	B(\widehat{z_{0}}) \ar[d]_{\textstyle \widehat{f}} && \D \ar[ll]_{\textstyle \phi_{0}} \ar[d]^{\textstyle z\mapsto z^{d_{0}}} \\
	B(\widehat{z_{1}}) \ar[d]_{\textstyle \widehat{f}} && \D \ar[ll]_{\textstyle \phi_{1}} \ar[d]^{\textstyle z\mapsto z^{d_{1}}} \\
	B(\widehat{z_{2}}) \ar[d]_{\textstyle \widehat{f}} && \D \ar[ll]_{\textstyle \phi_{2}} \ar[d]^{\textstyle z\mapsto z^{d_{2}}} \\
	B(\widehat{z_{0}}) && \D \ar[ll]_{\textstyle \phi_{0}}
}$$

Recall that an equipotential $\beta$ in any $B(\widehat{z_{k}})$ is the image by $\phi_{k}$ of an euclidean circle in $\D$ centered at $0$. The radius of this circle is called the level of $\beta$ and is denoted by $L_{k}(\beta)\in]0,1[$, in order that
$\beta=\{z\in B(\widehat{z_{k}})\,/\,|\phi_{k}^{-1}(z)|=L_{k}(\beta)\}$.

Recall that any pair of disjoint continua $\beta,\beta'$ in $\CC$ uniquely defines an open annulus in $\CC$ denoted by $A(\beta,\beta')$. If $\beta,\beta'$ contain at least two points each, $A(\beta,\beta')$ is biholomorphic to a round annulus of the form $A_{r}=\{z\in\C\,/\,r<|z|<1\}$ where $r\in]0,1[$ only depends on $A(\beta,\beta')$. The modulus of $A(\beta,\beta')$ is defined to be $\modulus(A(\beta,\beta'))=\frac{1}{2\pi}\log(\frac{1}{r})$. In particular if $\beta,\beta'$ are two equipotentials in the same domain $B(\widehat{z_{k}})$ of levels $L_{k}(\beta)>L_{k}(\beta')$ then
$$\modulus(A(\beta,\beta'))=\frac{1}{2\pi}\log\left(\frac{L_{k}(\beta)}{L_{k}(\beta')}\right).$$

Finally for every $k\in\{0,1,2\}$, denote by $\alpha_{k}$ the compact connected subset of $J(\widehat{f})$ which corresponds to the boundary of $B(\widehat{z_{k}})$.

\begin{lem}\label{LemEquip}
If assumption (\ref{H2}) holds then there exist three equipotentials $\beta_{0}$ in $B(\widehat{z_{0}})$, $\beta_{1}$ in $B(\widehat{z_{1}})$, and $\beta_{2}$ in $B(\widehat{z_{2}})$, together with two equipotentials $\beta_{3}^{+}$ and $\beta_{3}^{-}$ in $B(\widehat{z_{0}})$ such that 
$$L_{0}(\beta_{0})>L_{0}(\beta_{3}^{+})>L_{0}(\beta_{3}^{-})$$
and the following linear system of inequalities holds.
\begin{equation}\label{EqSysIneq}
	\left\{ \begin{array}{ccl}
		\dfrac{1}{d_{0}}\modulus(A(\alpha_{1},\beta_{1})) & < & \modulus(A(\alpha_{0},\beta_{0})) \\
		\dfrac{1}{d_{1}}\modulus(A(\alpha_{2},\beta_{2})) & < & \modulus(A(\alpha_{1},\beta_{1})) \\
		\dfrac{1}{d_{2}}\modulus(A(\alpha_{0},\beta_{0})) + \dfrac{1}{d_{2}}\modulus(A(\beta_{3}^{+},\beta_{3}^{-})) & < & \modulus(A(\alpha_{2},\beta_{2})) \\
		\dfrac{1}{d_{3}}\modulus(A(\beta_{1},\beta_{0})) & < & \modulus(A(\beta_{3}^{+},\beta_{3}^{-})) \\
		&& \\
		\multicolumn{3}{c}{\text{and}\quad\modulus(A(\beta_{0},\beta_{3}^{+}))>1}\\
	\end{array}\right.
\end{equation}
\end{lem}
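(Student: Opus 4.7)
The plan is to exploit the strict Perron-Frobenius slack provided by assumption (H2). I would first invoke Remark 2 after Definition \ref{DefTransitionMatrix} to obtain a strictly positive vector $V=(v_0,v_1,v_2,v_3)\in\R^4$ with $MV<V$ componentwise, i.e.,
\begin{equation*}
\frac{v_1}{d_0}<v_0,\quad \frac{v_2}{d_1}<v_1,\quad \frac{v_0+v_3}{d_2}<v_2,\quad \frac{v_0+v_1}{d_3}<v_3.
\end{equation*}
Then, for a scaling parameter $t>0$ to be fixed at the end, I would set $m_k=tv_k$ and choose $\beta_k\subset B(\widehat{z_k})$ to be the equipotential of level $L_k(\beta_k)=e^{-2\pi tv_k}$ for $k\in\{0,1,2\}$, so that $\modulus(A(\alpha_k,\beta_k))=tv_k$. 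Inside $B(\widehat{z_0})$, I would pick $\beta_3^+$ at level $L_0(\beta_0)\,e^{-4\pi}$ (which forces $\modulus(A(\beta_0,\beta_3^+))=2>1$ and $L_0(\beta_0)>L_0(\beta_3^+)$), and $\beta_3^-$ at level $L_0(\beta_3^+)\,e^{-2\pi tv_3}$ (so that $\modulus(A(\beta_3^+,\beta_3^-))=tv_3$ and $L_0(\beta_3^+)>L_0(\beta_3^-)>0$).

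By design, the first three inequalities of \eqref{EqSysIneq} are exactly $t$ times the first three Perron-Frobenius inequalities, and so hold strictly for every $t>0$; the separation condition $\modulus(A(\beta_0,\beta_3^+))>1$ is built into the construction. The only remaining constraint is the fourth inequality
$$\frac{1}{d_3}\modulus(A(\beta_1,\beta_0))<tv_3,$$
and this is where I expect the main difficulty to lie, since its left-hand side involves a global modulus between two curves lying in distinct Fatou components and so is not directly controlled by the Böttcher levels.

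The key technical estimate I would establish is an inequality of the form $\modulus(A(\beta_1,\beta_0))\leqslant tv_0+tv_1+K$ for some constant $K>0$ depending only on $\widehat{f}$ (in particular independent of $t$). The idea is that each $\phi_k$ is conformal at $0$ with $\phi_k(0)=\widehat{z_k}$, so for large $t$ the equipotential $\beta_k$ (at level $e^{-2\pi tv_k}$) is squeezed between two Euclidean circles of radii comparable to $e^{-2\pi tv_k}$ centered at $\widehat{z_k}$ in any fixed chart, the comparison being made quantitative by the Koebe distortion theorem applied to $\phi_k$. Consequently $A(\beta_1,\beta_0)$ is conformally contained in the complement on $\CC$ of two disjoint round disks around $\widehat{z_0},\widehat{z_1}$ of radii $\asymp e^{-2\pi tv_k}$, whose modulus one computes explicitly via Möbius normalization to a round annulus to be $tv_0+tv_1+O(1)$. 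Substituting this bound into the fourth inequality reduces it to
$$\frac{t(v_0+v_1)+K}{d_3}<tv_3,$$
which, thanks to the strict Perron-Frobenius slack $v_3-(v_0+v_1)/d_3>0$, holds for every $t$ above some explicit threshold $t_0$. Fixing any $t\geqslant t_0$ then yields equipotentials satisfying the full system \eqref{EqSysIneq}.
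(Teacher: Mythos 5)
Your proposal is correct and follows essentially the same strategy as the paper: exploit the Perron--Frobenius slack $MV<V$ from assumption (H2), scale by a large parameter, and absorb a bounded error term into the strict slack in the fourth row. The only notable difference is at the key technical step. The estimate you propose to derive via Koebe distortion and Möbius normalization, namely $\modulus(A(\beta_1,\beta_0))\leqslant \modulus(A(\alpha_0,\beta_0))+\modulus(A(\alpha_1,\beta_1))+K$ with $K$ independent of the equipotential levels, is precisely the right-hand side of the inverse Grötzsch's inequality stated as Lemma~\ref{LemGrotzch} in the appendix (attributed to Cui and Tan); the paper simply cites it, while you sketch a self-contained re-derivation, which is indeed how that lemma is proved. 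Everything else lines up: the paper solves the perturbed system $MX+(0,0,0,C/d_3)^{\mathsf T}<X$ by taking $X=\mu V$ with $\mu$ large, which is exactly your scaling $X=tV$; your choice of fixing $\modulus(A(\beta_0,\beta_3^+))=2$ instead of merely requiring it to exceed~$1$ is an inessential normalization; and your placement of $\beta_3^-$ at modulus $tv_3$ below $\beta_3^+$ matches the paper's definition of $\beta_3^-$ via $x_3$. So the route is the same; you have just inlined the proof of the one ingredient the paper outsources to its appendix.
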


Recall that the modulus is a conformal invariant, or more precisely if there is a holomorphic covering of degree $d$ from an open annulus $A$ onto another one $A'$ then $\modulus(A)=\frac{1}{d}\modulus(A')$. Hence the first three inequalities in linear system (\ref{EqSysIneq}) implies that the preimages under $\widehat{f}$ of these equipotentials are arranged as shown in Figure \ref{FigEquipotentials}. The fourth inequality will allow to realize the preimage of the branching point $\alpha$ in $\HH_{P}$ (see Lemma \ref{LemPreimage}) while the last inequality ensures sufficient space to realize the folding corresponding to the critical point $c_{0}$ (see Lemma \ref{LemFolding}).

\begin{figure}[!htb]
	\begin{center}
	\includegraphics[width=\textwidth]{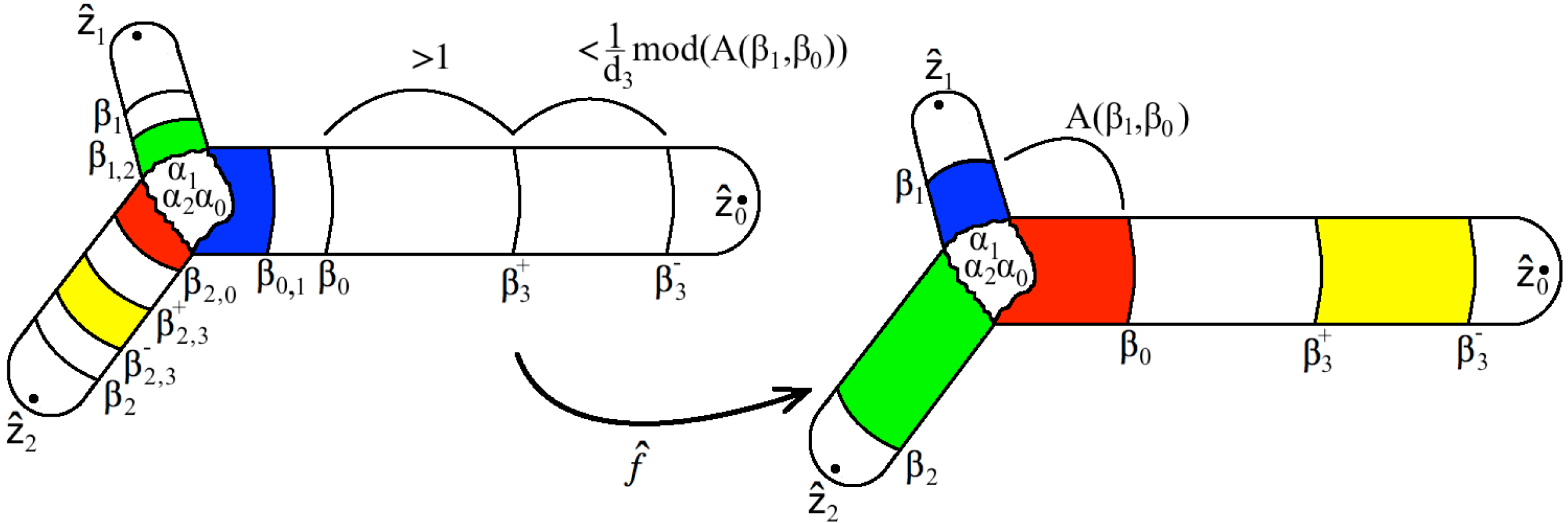}
	\caption{The pattern of the equipotentials (and their preimages) coming from Lemma \ref{LemEquip} displayed on the Riemann sphere which is topologically distorted to emphasize the three domains $B(\widehat{z_{0}})$, $B(\widehat{z_{1}})$, and $B(\widehat{z_{2}})$ (compare with Figure \ref{FigPersianCarpetTree}.c).}
	\label{FigEquipotentials}
	\end{center}
\end{figure}

The key point of the proof needs an inverse Grötzch's inequality due to Cui Guizhen and Tan Lei (see Section \ref{SecAppendix}).

\begin{proof}
Let $C>0$ be the constant coming from Lemma \ref{LemGrotzch} for the marked hyperbolic disks $B(\widehat{z_{0}}),B(\widehat{z_{1}})$. Thus, for every pair of equipotentials $\beta_{0}$ in $B(\widehat{z_{0}})$ and $\beta_{1}$ in $B(\widehat{z_{1}})$, we have
$$\frac{1}{d_{3}}\modulus(A(\beta_{1},\beta_{0}))\leqslant\frac{1}{d_{3}}(\modulus(A(\alpha_{0},\beta_{0}))+\modulus(A(\alpha_{1},\beta_{1}))+C).$$

Now consider the following linear system of inequations with real unknowns $x_{0},x_{1},x_{2},x_{3}$.
\begin{equation}\label{EqSysIneq2}
	\left\{ \begin{array}{ccl}
		\dfrac{1}{d_{0}}x_{1} & < & x_{0} \\
		\dfrac{1}{d_{1}}x_{2} & < & x_{1} \\
		\dfrac{1}{d_{2}}x_{0} + \dfrac{1}{d_{2}}x_{3} & < & x_{2} \\
		\dfrac{1}{d_{3}}(x_{0}+x_{1}+C) & < & x_{3} \\
	\end{array}\right.
\end{equation}
Using the transition matrix $M$ coming from Definition \ref{DefTransitionMatrix}, this system is equivalent to
$$MX+\left(\begin{array}{c}0\\0\\0\\\frac{C}{d_{3}}\end{array}\right)<X\quad\text{where}\ X=\left(\begin{array}{c}x_{0}\\x_{1}\\x_{2}\\x_{3}\end{array}\right).$$
Recall that assumption (\ref{H2}) states that the leading eigenvalue $\lambda(\HH_{P},w)$ of $M$ is less than $1$. It follows from Perron-Frobenius Theorem and continuity of spectral radius the existence of a vector $V\in\R^{4}$ with positive entries such that $MV<V$. Now taking $\mu>0$ large enough (for instance $\mu=(\frac{C}{d_{3}}+1)(v_{3}-\frac{1}{d_{3}}v_{0}-\frac{1}{d_{3}}v_{1})^{-1}$), the vector $X=\mu V$ with positive entries solves the linear system of inequations (\ref{EqSysIneq2}).

The equipotentials $\beta_{0},\beta_{1},\beta_{2}$ are uniquely defined by
$$\frac{1}{2\pi}\log\left(\frac{1}{L_{k}(\beta_{k})}\right)=\modulus(A(\alpha_{k},\beta_{k}))=x_{k}\quad\text{for every}\ k\in\{0,1,2\}.$$

For $\beta_{3}^{+}$, choose an arbitrary equipotential in $B(\widehat{z_{0}})$ such that
$$L_{0}(\beta_{0})>L_{0}(\beta_{3}^{+})\ \text{and}\ \frac{1}{2\pi}\log\left(\frac{L_{0}(\beta_{0})}{L_{0}(\beta_{3}^{+})}\right)=\modulus(A(\beta_{0},\beta_{3}^{+}))>1.$$

 Then $\beta_{3}^{-}$ is uniquely defined by
$$L_{0}(\beta_{3}^{+})>L_{0}(\beta_{3}^{-})\ \text{and}\ \frac{1}{2\pi}\log\left(\frac{L_{0}(\beta_{3}^{+})}{L_{0}(\beta_{3}^{-})}\right)=\modulus(A(\beta_{3}^{+},\beta_{3}^{-}))=x_{3}.$$

It follows from construction that $\beta_{0}$, $\beta_{1}$, $\beta_{2}$, $\beta_{3}^{+}$, and $\beta_{3}^{-}$ satisfy all the requirements of Lemma \ref{LemEquip}, the fourth inequality in linear system (\ref{EqSysIneq}) coming from the last inequality in linear system (\ref{EqSysIneq2}) and Lemma \ref{LemGrotzch}.
\end{proof}

It turns out in the proof above that the lower bound of the last inequality in linear system (\ref{EqSysIneq}) may be changed for any positive constant (which depends only on the integers $d_{0}$, $d_{1}$, $d_{2}$, and $d_{3}$). As we will see later in Lemma \ref{LemFolding}, the lower bound $1$ ensures sufficient space to make the surgery in $A(\beta_{0},\beta_{3}^{+})$. However, the author guesses that the last inequality in linear system (\ref{EqSysIneq}) is not necessary (see discussion after the proof of Lemma \ref{LemAnnulusDisk}).

The system of equipotentials coming from Lemma \ref{LemEquip} will be used to divide $\CC$ into several pieces on which a quasiregular map $F$ will be piecewisely defined. This map $F$ should be carefully defined in such a way that its dynamics is encoded by the weighted dynamical tree $(\HH_{P},w)$ (see Theorem \ref{ThmMain1}).

For instance, the first step of the construction which corresponds to the dynamics on $e_{1}\cup e_{2}$ for $\HH_{P}$ is the following. Denote by $\beta_{0,1}$ the preimage of $\beta_{1}$ in $B(\widehat{z_{0}})$ (see Figure \ref{FigEquipotentials}). From the first inequality in linear system (\ref{EqSysIneq}), $\beta_{0,1}$ is an equipotential of level $L_{0}(\beta_{0,1})>L_{0}(\beta_{0})$. Denote by $D(\beta_{0,1})$ the open disk bounded by $\beta_{0,1}$ and containing $\{\widehat{z_{1}},\widehat{z_{2}}\}$ (and hence $J(\widehat{f})\cup B(\widehat{z_{1}})\cup B(\widehat{z_{2}})$ as well). Then $F$ is defined to be the rational map $\widehat{f}$ on $D(\beta_{0,1})$. Remark that $F|_{D(\beta_{0,1})}$ continuously extends to $\beta_{0,1}$ by a degree $d_{0}$ covering denoted by $F|_{\beta_{0,1}}:\beta_{0,1}\rightarrow\beta_{1}$.

\subsection{Folding with an annulus-disk surgery}

The aim of this part of the construction is to realize the folding corresponding to the critical point $c_{0}$ in $\HH_{P}$. More precisely $F$ should holomorphically maps a small annulus (corresponding to a neighborhood of $c_{0}$ in $\HH_{P}$) onto a disk (corresponding to a neighborhood of $c_{1}$ in $\HH_{P}$) with respect to the degrees $d_{0},d_{3}$.

Let $\gamma_{1}$ be an arbitrary equipotential in $B(\widehat{z_{1}})$ such that $L_{1}(\gamma_{1})<L_{1}(\beta_{1})$. Denote by $D(\gamma_{1})$ the open disk bounded by $\gamma_{1}$ and containing $\widehat{z_{1}}$. In order to follow more easily the construction, we will slightly improve the notation. So let $\gamma_{0,1}$ be the equipotential $\beta_{0}$, keeping in mind that $\gamma_{0,1}$ will be mapped onto $\gamma_{1}$ by a degree $d_{0}$ covering. Notice that the first inequality in linear system (\ref{EqSysIneq}) of Lemma \ref{LemEquip} implies $L_{0}(\beta_{0,1})>L_{0}(\gamma_{0,1})$. Similarly let $\beta_{3,1}$ be the equipotential $\beta_{3}^{+}$, keeping in mind that $\beta_{3,1}$ will be mapped onto $\beta_{1}$ by a degree $d_{3}$ covering.

\begin{lem}\label{LemFolding}
There exist an equipotential $\gamma_{3,1}$ in $B(\widehat{z_{0}})$ and a holomorphic branched covering $F|_{A(\gamma_{0,1},\gamma_{3,1})}:A(\gamma_{0,1},\gamma_{3,1})\rightarrow D(\gamma_{1})$ such that:
\begin{description}
	\item[(i)] $L_{0}(\beta_{0,1})>L_{0}(\gamma_{0,1})>L_{0}(\gamma_{3,1})>L_{0}(\beta_{3,1})$;
	\item[(ii)] $F|_{A(\gamma_{0,1},\gamma_{3,1})}$ has degree $d_{0}+d_{3}$ and has $d_{0}+d_{3}$ critical points counted with multiplicity, which one of them, denoted by $c$, satisfies $F|_{A(\gamma_{0,1},\gamma_{3,1})}(c)=\widehat{z_{1}}$;
	\item[(iii)] $F|_{A(\gamma_{0,1},\gamma_{3,1})}$ continuously extends to $\gamma_{0,1}\cup\gamma_{3,1}$ by a degree $d_{0}$ covering $F|_{\gamma_{0,1}}:\gamma_{0,1}\rightarrow\gamma_{1}$ and a degree $d_{3}$ covering $F|_{\gamma_{3,1}}:\gamma_{3,1}\rightarrow\gamma_{1}$.
\end{description}
\end{lem}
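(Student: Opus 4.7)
The plan is to construct the map $F|_{A(\gamma_{0,1},\gamma_{3,1})}$ via an explicit rational function on $\CC$ whose preimage of a round disk is a topological annulus with exactly the required boundary degrees, then to uniformize that annulus inside the Böttcher chart $\phi_{0}$ of $B(\widehat{z_{0}})$ so that its outer boundary becomes $\gamma_{0,1}$. The inner boundary of the resulting round annulus will then provide the equipotential $\gamma_{3,1}$, and the modulus inequality $\modulus(A(\beta_{0},\beta_{3}^{+}))>1$ from Lemma \ref{LemEquip} is what guarantees $\gamma_{3,1}$ can be placed strictly between $\gamma_{0,1}$ and $\beta_{3,1}$.

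As the rational model, consider
\[
G(\zeta) \;=\; a\,\frac{(\zeta-c_{0})^{d_{0}+d_{3}}}{(\zeta-p_{0})^{d_{0}}(\zeta-p_{3})^{d_{3}}}\qquad\text{on }\CC,
\]
with distinct $p_{0},p_{3}\in\C$, nonzero $a\in\C$, and $c_{0}:=(d_{0}p_{0}+d_{3}p_{3})/(d_{0}+d_{3})$. This choice of $c_{0}$ ensures that the entire multiplicity of the zero of $G$ at $c_{0}$ is carried by a single critical point of local degree $d_{0}+d_{3}$. A direct computation identifies all critical points of $G$ on $\CC$: the point $c_{0}$ (local degree $d_{0}+d_{3}$, critical value $0$), the poles $p_{0}$ and $p_{3}$ (local degrees $d_{0}$, $d_{3}$, both with critical value $\infty$), and $\infty$ (local degree $2$, critical value $a$), exhausting the Riemann--Hurwitz count $2(d_{0}+d_{3})-2$. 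Fix $R>|a|$ so that $D_{R}:=\{|w|<R\}$ contains the two finite critical values $0$ and $a$. Writing $A_{G}:=G^{-1}(D_{R})$ and applying Riemann--Hurwitz to the proper branched cover $G|_{A_{G}}:A_{G}\to D_{R}$ of degree $d_{0}+d_{3}$, one obtains $\chi(A_{G})=(d_{0}+d_{3})-[(d_{0}+d_{3}-1)+(2-1)]=0$, so $A_{G}$ is a topological annulus whose two boundary components are Jordan curves surrounding $p_{0}$ and $p_{3}$ and covering $\partial D_{R}$ with respective degrees $d_{0}$ and $d_{3}$.

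Next, verify that $\modulus(A_{G})$ varies continuously with $R\in(|a|,+\infty)$ and achieves arbitrarily small values: as $R\to|a|^{+}$, the critical value $a$ enters $D_{R}$ by crossing $\partial D_{R}$ at the critical point $\infty$, so $A_{G}$ pinches along $\infty$ and $\modulus(A_{G})\to 0$; as $R\to+\infty$, the two boundary curves collapse onto $p_{0}$ and $p_{3}$ and $\modulus(A_{G})\to+\infty$. By the intermediate value theorem, choose $R$ with $0<\modulus(A_{G})<\modulus(A(\beta_{0},\beta_{3}^{+}))$, possible precisely because the right-hand side is $>1$ by Lemma \ref{LemEquip}. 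By the uniformization of annuli, let $\psi:A_{G}\to\{r_{2}^{*}<|\zeta|<r_{1}\}$ be a biholomorphism with $r_{1}:=L_{0}(\gamma_{0,1})$ and $r_{2}^{*}:=r_{1}\exp(-2\pi\modulus(A_{G}))$, normalized so that the boundary component of $A_{G}$ surrounding $p_{0}$ is sent to the outer circle $|\zeta|=r_{1}$. The modulus inequality then forces $r_{2}^{*}>L_{0}(\beta_{3,1})$.

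Define $\gamma_{3,1}$ to be the equipotential of $\widehat{f}$ in $B(\widehat{z_{0}})$ at level $r_{2}^{*}$ and set
\[
F|_{A(\gamma_{0,1},\gamma_{3,1})}\;:=\;\phi_{1}\circ G\circ\psi^{-1}\circ\phi_{0}^{-1},
\]
holomorphic by construction. Property \textbf{(i)} then follows from the first inequality of (\ref{EqSysIneq}) (for $L_{0}(\beta_{0,1})>L_{0}(\gamma_{0,1})$), from the uniformization (for $L_{0}(\gamma_{0,1})>r_{2}^{*}$), and from the modulus control (for $r_{2}^{*}>L_{0}(\beta_{3,1})$). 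Property \textbf{(ii)} follows by transferring the critical structure of $G|_{A_{G}}$ through the biholomorphisms: the point $c:=\phi_{0}(\psi^{-1}(c_{0}))$ is a critical point of $F$ with $F(c)=\phi_{1}(0)=\widehat{z_{1}}$, and together with $\phi_{0}(\psi^{-1}(\infty))$ (local degree $2$) the total critical multiplicity is $(d_{0}+d_{3}-1)+1=d_{0}+d_{3}$. Property \textbf{(iii)} is immediate from the boundary degrees of $G$ on the components of $\partial A_{G}$ carried over by $\psi$, $\phi_{0}$, $\phi_{1}$. The main obstacle I anticipate is rigorously establishing the continuity of $\modulus(A_{G})$ in $R$ and the pinching asymptotic $\modulus(A_{G})\to 0$ as $R\to|a|^{+}$; this requires an extremal-length analysis of the degenerating annulus near the saddle critical point $\infty$, and is the step that actually uses the lower bound $1$ in Lemma \ref{LemEquip}.
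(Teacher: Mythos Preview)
Your approach is essentially correct and genuinely different from the paper's. The paper does not build its own rational model here; it quotes Lemma~\ref{LemAnnulusDisk} from the appendix, which uses the McMullen map $z\mapsto z^{d_{0}}+\lambda/z^{d_{3}}$ as the annulus--disk model and proves the explicit bound $\modulus(A(\gamma,\gamma'))\leqslant 1$ for a fixed value of $\lambda$. The paper then sets $\gamma_{3,1}$ so that $\modulus(A(\gamma_{0,1},\gamma_{3,1}))=\modulus(A(\gamma,\gamma'))$, transports the model by a biholomorphism $\psi$ between annuli of equal modulus, and composes with a Riemann map $\phi:\D\to D(\gamma_{1})$ chosen so that one critical value lands on $\widehat{z_{1}}$. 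The inequality $\modulus(A(\beta_{0},\beta_{3}^{+}))>1\geqslant\modulus(A(\gamma,\gamma'))$ is then used verbatim to get \textbf{(i)}.

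Your model $G(\zeta)=a(\zeta-c_{0})^{d_{0}+d_{3}}/\big[(\zeta-p_{0})^{d_{0}}(\zeta-p_{3})^{d_{3}}\big]$ has a different critical portrait (one critical point of multiplicity $d_{0}+d_{3}-1$ plus one simple critical point at $\infty$, rather than $d_{0}+d_{3}$ simple critical points on a circle). This costs you the pinching analysis you flag, but buys you two things: a distinguished critical point already mapping to $0$ (so the choice of Riemann map toward $\widehat{z_{1}}$ is canonical rather than ad hoc), and---if the pinching is carried out---the conclusion that the lower bound $1$ in Lemma~\ref{LemEquip} could be replaced by any positive constant, a fact the paper only conjectures in the remark following Lemma~\ref{LemAnnulusDisk}. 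Two small repairs: your composition $\phi_{1}\circ G\circ\psi^{-1}\circ\phi_{0}^{-1}$ needs an intermediate rescaling $D_{R}\to\{|w|<L_{1}(\gamma_{1})\}$ before applying $\phi_{1}$ (otherwise the target is $\phi_{1}(D_{R})$ rather than $D(\gamma_{1})$); and the boundary extension of $\psi$ should be justified by noting that $\partial A_{G}=\{|G|=R\}$ consists of smooth Jordan curves since $R$ is a regular value. The pinching limit $\modulus(A_{G})\to 0$ as $R\to|a|^{+}$ is true and follows from a local length--area estimate near the saddle at $\infty$ (in the chart $u=1/\zeta$ one has $G\sim a+bu^{2}$, so the two boundary arcs come within distance $\sim\sqrt{R-|a|}$ of each other), but the paper sidesteps this entirely by the hard estimate in Lemma~\ref{LemAnnulusDisk}.
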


\begin{proof}
Let $G:A(\gamma,\gamma')\rightarrow\D$ be a holomorphic branched covering coming from Lemma \ref{LemAnnulusDisk} for the integers $n=d_{0}$ and $n'=d_{3}$. Define the equipotential $\gamma_{3,1}$ by
$$L_{0}(\gamma_{0,1})>L_{0}(\gamma_{3,1})\ \text{and}\ \frac{1}{2\pi}\log\left(\frac{L_{0}(\gamma_{0,1})}{L_{0}(\gamma_{3,1})}\right)=\modulus(A(\gamma_{0,1},\gamma_{3,1}))=\modulus(A(\gamma,\gamma')).$$

Since $\modulus(A(\gamma_{0,1},\beta_{3,1}))=\modulus(A(\beta_{0},\beta_{3}^{+}))>1$ (from the last inequality in linear system (\ref{EqSysIneq}) of Lemma \ref{LemEquip}) and $\modulus(A(\gamma_{0,1},\gamma_{3,1}))=\modulus(A(\gamma,\gamma'))\leqslant 1$ (from the point \textbf{(iii)} in Lemma \ref{LemAnnulusDisk}), it follows that $L_{0}(\gamma_{3,1})>L_{0}(\beta_{3,1})$ and the point \textbf{(i)} holds.

Now let $\psi$ be any biholomorphic map from $A(\gamma_{0,1},\gamma_{3,1})$ onto $A(\gamma,\gamma')$. The existence of such a biholomorphic map is ensured by the fact that these two open annuli have same modulus. Since $A(\gamma_{0,1},\gamma_{3,1})$ and $A(\gamma,\gamma')$ are bounded by quasicircles, $\psi$ may be continuously extended to $\gamma_{0,1}\cup\gamma_{3,1}$ by two homeomorphisms.

Let $c$ be the preimage under $\psi$ of any critical point of $G$ and let $\phi:\D\rightarrow D(\gamma_{1})$ be any Riemann mapping of $D(\gamma_{1})$ such that $\phi(G(\psi(c)))=\widehat{z_{1}}$. Since $D(\gamma_{1})$ is bounded by an equipotential, $\phi$ may be continuously extended to $\partial\D$ by a homeomorphism.

Then $F|_{A(\gamma_{0,1},\gamma_{3,1})}=\phi\circ G\circ\psi$ is holomorphic on $A(\gamma_{0,1},\gamma_{3,1})$ and satisfies \textbf{(ii)}, and \textbf{(iii)} by construction.
\end{proof}

Figure \ref{FigFolding} depicts the map $F|_{A(\gamma_{0,1},\gamma_{3,1})}$ coming from Lemma \ref{LemFolding}.

\begin{figure}[!htb]
	\begin{center}
	\includegraphics[width=\textwidth]{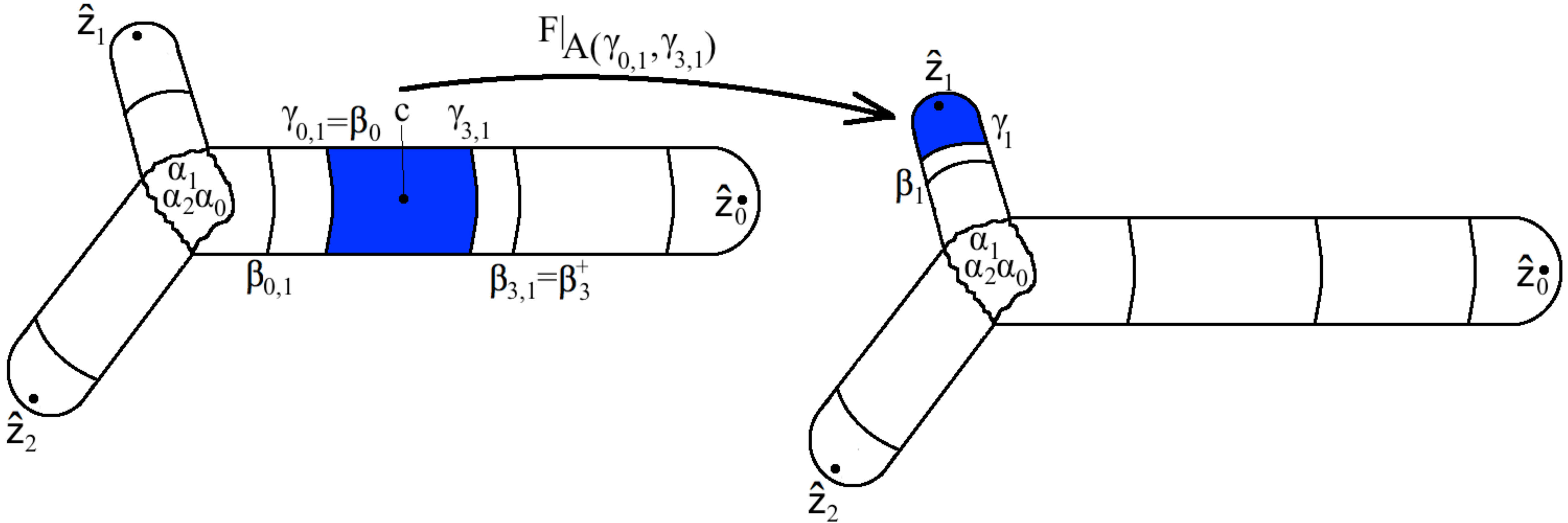}
	\caption{The map $F|_{A(\gamma_{0,1},\gamma_{3,1})}$ coming from Lemma \ref{LemFolding} displayed on the Riemann sphere which is topologically distorted to emphasize the three domains $B(\widehat{z_{0}})$, $B(\widehat{z_{1}})$, and $B(\widehat{z_{2}})$ (compare with Figure \ref{FigPersianCarpetTree}.c).}
	\label{FigFolding}
	\end{center}
\end{figure}

\subsection{Preimage of the branching part}

According to the last two sections, the map $F$ is defined up to there on the union of the open disk $D(\beta_{0,1})$ containing $\{\widehat{z_{1}},\widehat{z_{2}}\}$ with the open annulus $A(\gamma_{0,1},\gamma_{3,1})$ containing $c$. Moreover $F$ maps $c$ to $\widehat{z_{1}}$, $\widehat{z_{1}}$ to $\widehat{z_{2}}$ and $\widehat{z_{2}}$ to $\widehat{z_{0}}$. Now we need to define $F$ near $\widehat{z_{0}}$ by sending $\widehat{z_{0}}$ to $c$ in order to realize a cycle of period $4$ as required in Theorem \ref{ThmMain1}. This should be done carefully so that the quasiconformal surgery may be concluded.

The first problem is that some preimage of $J(\widehat{f})$ (or more precisely of the open annulus $A(\beta_{1},\beta_{0})$ containing $J(\widehat{f})$) must appear in $B(\widehat{z_{0}})$ (compare with Figure \ref{FigPersianCarpetTree}.c where the edge $e_{3}=[c_{0},c_{3}]_{\HH_{P}}$ contains a preimage of the branching point $\alpha$). This is done in Lemma \ref{LemPreimage} below which essentially uses the fourth inequality in linear system (\ref{EqSysIneq}) of Lemma \ref{LemEquip}.

\begin{lem}\label{LemPreimage}
There exist an equipotential $\beta_{3,0}$ in $B(\widehat{z_{0}})$ and a holomorphic covering $F|_{A(\beta_{3,1},\beta_{3,0})}:A(\beta_{3,1},\beta_{3,0})\rightarrow A(\beta_{1},\beta_{0})$ such that:
\begin{description}
	\item[(i)] $L_{0}(\beta_{3,1})>L_{0}(\beta_{3,0})>L_{0}(\beta_{3}^{-})$;
	\item[(ii)] $F|_{A(\beta_{3,1},\beta_{3,0})}$ has degree $d_{3}$ and has no critical point;
	\item[(iii)] $F|_{A(\beta_{3,1},\beta_{3,0})}$ continuously extends to $\beta_{3,1}\cup\beta_{3,0}$ by two degree $d_{3}$ coverings $F|_{\beta_{3,1}}:\beta_{3,1}\rightarrow\beta_{1}$ and $F|_{\beta_{3,0}}:\beta_{3,0}\rightarrow\beta_{0}$.
\end{description}
\end{lem}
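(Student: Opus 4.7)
The plan is to define the new piece $F|_{A(\beta_{3,1},\beta_{3,0})}$ by invoking the standard classification of holomorphic coverings between annuli: a degree-$d$ holomorphic covering from one open annulus $A$ onto another $A'$ exists if and only if $\modulus(A)=\frac{1}{d}\modulus(A')$, and any such covering can be realized, up to post-composition with conformal automorphisms, by uniformizing both sides to round annuli and then applying the map $z\mapsto z^{d}$. Since no previously constructed piece of $F$ overlaps the region to be defined, the only compatibility constraints are the boundary coverings demanded in \textbf{(iii)}.

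First I would choose the equipotential $\beta_{3,0}$. As $L_{0}(\beta)$ ranges over $(L_{0}(\beta_{3}^{-}),L_{0}(\beta_{3,1}))$, the quantity $\modulus(A(\beta_{3,1},\beta))$ sweeps continuously through the interval $(0,\modulus(A(\beta_{3,1},\beta_{3}^{-})))$. Since $\beta_{3,1}=\beta_{3}^{+}$, the fourth inequality of the linear system (\ref{EqSysIneq}) in Lemma \ref{LemEquip} reads
$$\frac{1}{d_{3}}\modulus(A(\beta_{1},\beta_{0}))<\modulus(A(\beta_{3}^{+},\beta_{3}^{-}))=\modulus(A(\beta_{3,1},\beta_{3}^{-})),$$
so there is a unique equipotential $\beta_{3,0}$ in $B(\widehat{z_{0}})$ with $L_{0}(\beta_{3,1})>L_{0}(\beta_{3,0})>L_{0}(\beta_{3}^{-})$ satisfying $\modulus(A(\beta_{3,1},\beta_{3,0}))=\frac{1}{d_{3}}\modulus(A(\beta_{1},\beta_{0}))$. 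This proves \textbf{(i)}.

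Next I would construct the covering. Pick $r\in(0,1)$ with $\frac{1}{2\pi}\log(\frac{1}{r})=\modulus(A(\beta_{3,1},\beta_{3,0}))$, so that $\frac{1}{2\pi}\log(\frac{1}{r^{d_{3}}})=\modulus(A(\beta_{1},\beta_{0}))$. By uniformization of doubly connected domains, choose biholomorphisms $\psi:A(\beta_{3,1},\beta_{3,0})\to A_{r}$ and $\phi:A_{r^{d_{3}}}\to A(\beta_{1},\beta_{0})$, with $\psi$ sending $\beta_{3,1}$ to $\{|z|=1\}$ and $\beta_{3,0}$ to $\{|z|=r\}$, and $\phi$ sending $\{|z|=1\}$ to $\beta_{1}$ and $\{|z|=r^{d_{3}}\}$ to $\beta_{0}$ (which can be arranged by composing with a conjugation if needed). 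Define
$$F|_{A(\beta_{3,1},\beta_{3,0})}=\phi\circ(z\mapsto z^{d_{3}})\circ\psi.$$
This is a holomorphic covering of degree $d_{3}$ with no critical points, since $z\mapsto z^{d_{3}}$ is locally injective on the punctured plane; this is \textbf{(ii)}. Both annuli are bounded by real-analytic Jordan curves (equipotentials of Böttcher coordinates), so $\psi$ and $\phi$ extend by Schwarz reflection to homeomorphisms on the closures, and the restriction of $z\mapsto z^{d_{3}}$ to $\{|z|=1\}$ and to $\{|z|=r\}$ are degree-$d_{3}$ coverings of $\{|z|=1\}$ and $\{|z|=r^{d_{3}}\}$ respectively; this gives the two boundary coverings in \textbf{(iii)}.

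The main obstacle is showing that the modulus equality forced by the covering hypothesis is compatible with the level constraint $L_{0}(\beta_{3,0})>L_{0}(\beta_{3}^{-})$, i.e., verifying \textbf{(i)}. This is exactly why the fourth inequality of (\ref{EqSysIneq}) was built into Lemma \ref{LemEquip}; once it is applied, the remaining construction is a routine application of uniformization and the monomial model $z\mapsto z^{d_{3}}$.
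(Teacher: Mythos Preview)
Your proof is correct and follows essentially the same approach as the paper: define $\beta_{3,0}$ so that $\modulus(A(\beta_{3,1},\beta_{3,0}))=\frac{1}{d_{3}}\modulus(A(\beta_{1},\beta_{0}))$, invoke the fourth inequality of (\ref{EqSysIneq}) to secure \textbf{(i)}, then uniformize both annuli to round models and transport $z\mapsto z^{d_{3}}$. The only cosmetic differences are that the paper names the target uniformizer $\Psi$ rather than $\phi$ and justifies the boundary extension simply by noting the boundaries are equipotentials, whereas you invoke Schwarz reflection; both are fine.
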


\begin{proof}
Define the equipotential $\beta_{3,0}$ by
$$L_{0}(\beta_{3,1})>L_{0}(\beta_{3,0})\ \text{and}\ \frac{1}{2\pi}\log\left(\frac{L_{0}(\beta_{3,1})}{L_{0}(\beta_{3,0})}\right)=\modulus(A(\beta_{3,1},\beta_{3,0}))=\frac{1}{d_{3}}\modulus(A(\beta_{1},\beta_{0})).$$

Since $\modulus(A(\beta_{3,1},\beta_{3,0}))=\frac{1}{d_{3}}\modulus(A(\beta_{1},\beta_{0}))<\modulus(A(\beta_{3}^{+},\beta_{3}^{-}))=\modulus(A(\beta_{3,1},\beta_{3}^{-}))$ (from the fourth inequality in linear system (\ref{EqSysIneq}) of Lemma \ref{LemEquip}), it follows that $L_{0}(\beta_{3,0})>L_{0}(\beta_{3}^{-})$ and the point \textbf{(i)} holds.

Now let $\psi$ be any biholomorphic map from $A(\beta_{3,1},\beta_{3,0})$ onto a round annulus of the form $A_{r}=\{z\in\C\,/\,r<|z|<1\}$ where $r$ is defined by
$$\frac{1}{2\pi}\log\left(\frac{1}{r}\right)=\modulus(A_{r})=\modulus(A(\beta_{3,1},\beta_{3,0})).$$
Since $A(\beta_{3,1},\beta_{3,0})$ is bounded by equipotentials, $\psi$ may be continuously extended to $\beta_{3,1}\cup\beta_{3,0}$ by two homeomorphisms which send $\beta_{3,1}$ onto $\{z\in\C\,/\,|z|=1\}$ and $\beta_{3,0}$ onto $\{z\in\C\,/\,|z|=r\}$.

Similarly, let $\Psi$ be any biholomorphic map from the round annulus $A_{r^{d_{3}}}$ onto $A(\beta_{1},\beta_{0})$. The existence of such a biholomorphic map is ensured by the fact that
$$\modulus(A_{r^{d_{3}}})=\frac{1}{2\pi}\log\left(\frac{1}{r^{d_{3}}}\right)=\frac{d_{3}}{2\pi}\log\left(\frac{1}{r}\right)=d_{3}\modulus(A(\beta_{3,1},\beta_{3,0}))=\modulus(A(\beta_{1},\beta_{0})).$$
Since $A(\beta_{1},\beta_{0})$ is bounded by equipotentials, $\Psi$ may be continuously extended to $\partial A_{r^{d_{3}}}$ by two homeomorphisms which send $\{z\in\C\,/\,|z|=1\}$ onto $\beta_{1}$ and $\{z\in\C\,/\,|z|=r^{d_{3}}\}$ onto $\beta_{0}$.

Then $F|_{A(\beta_{3,1},\beta_{3,0})}=\Psi\circ(z\mapsto z^{d_{3}})\circ\psi$ is holomorphic on $A(\beta_{3,1},\beta_{3,0})$ and satisfies \textbf{(ii)}, and \textbf{(iii)} by construction.
\end{proof}

Figure \ref{FigPreimage} depicts the map $F|_{A(\beta_{3,1},\beta_{3,0})}$ coming from Lemma \ref{LemPreimage}.

\begin{figure}[!htb]
	\begin{center}
	\includegraphics[width=\textwidth]{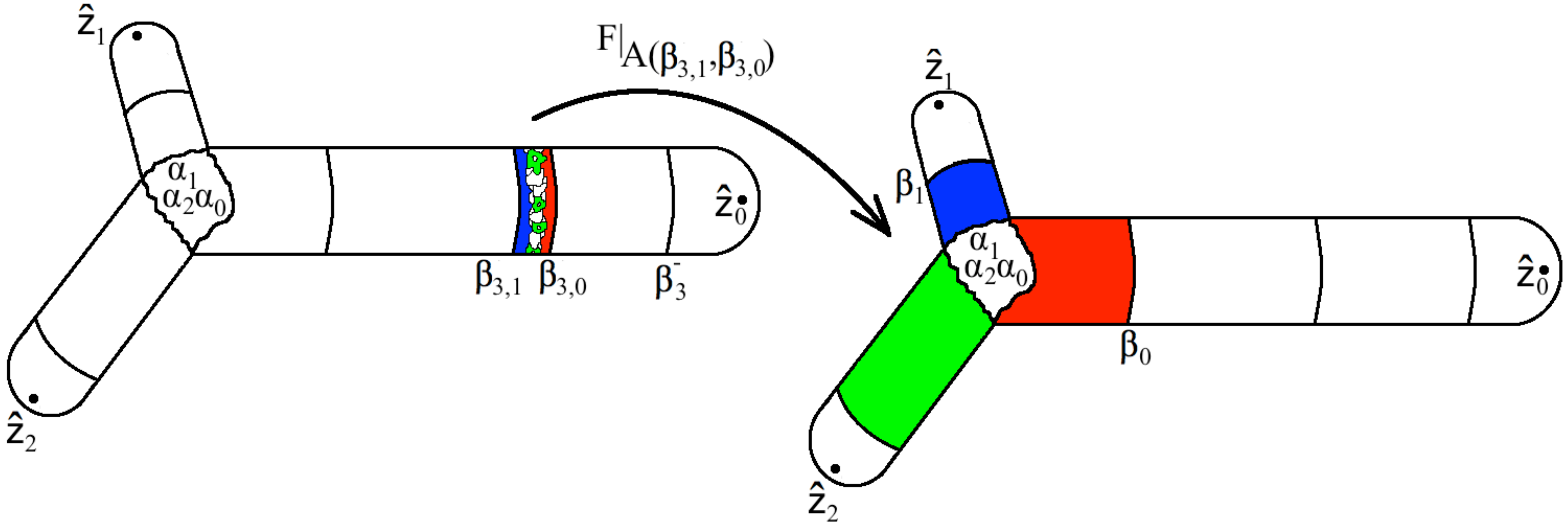}
	\caption{The map $F|_{A(\beta_{3,1},\beta_{3,0})}$ coming from Lemma \ref{LemPreimage} displayed on the Riemann sphere which is topologically distorted to emphasize the three domains $B(\widehat{z_{0}})$, $B(\widehat{z_{1}})$, and $B(\widehat{z_{2}})$ (compare with Figure \ref{FigPersianCarpetTree}.c).}
	\label{FigPreimage}
	\end{center}
\end{figure}

\subsection{Achievement of the super-attracting cycle of period $4$}

Now we achieve the definition of $F$ near $\widehat{z_{0}}$. This is done in two parts. Firstly Lemma \ref{LemLastStep1} realizes a preimage of a neighborhood of $\widehat{z_{0}}$ in $B(\widehat{z_{0}})$. Then Lemma \ref{LemLastStep2} defines $F$ near $\widehat{z_{0}}$ by sending a neighborhood of $\widehat{z_{0}}$ onto a neighborhood of $c$ (mapping $\widehat{z_{0}}$ to $c$).

Let $\gamma_{0}$ be an arbitrary equipotential in $B(\widehat{z_{0}})$ such that $L_{0}(\beta_{0})=L_{0}(\gamma_{0,1})>L_{0}(\gamma_{0})>L_{0}(\gamma_{3,1})$ and $A(\gamma_{0},\gamma_{3,1})$ contains the critical point $c$.

\begin{lem}\label{LemLastStep1}
There exist two equipotentials $\gamma_{3,0}$ and $\delta_{3,c}^{+}$ in $B(\widehat{z_{0}})$, a quasicircle $\delta_{c}^{+}$ in $A(\gamma_{0},\gamma_{3,1})$ which separates $c$ from $\gamma_{0}\cup\gamma_{3,1}$, and a holomorphic covering $F|_{A(\gamma_{3,0},\delta_{3,c}^{+})}:A(\gamma_{3,0},\delta_{3,c}^{+})\rightarrow A(\gamma_{0},\delta_{c}^{+})$ such that:
\begin{description}
	\item[(i)] $L_{0}(\beta_{3,0})>L_{0}(\gamma_{3,0})>L_{0}(\delta_{3,c}^{+})>L_{0}(\beta_{3}^{-})$;
	\item[(ii)] $F|_{A(\gamma_{3,0},\delta_{3,c}^{+})}$ has degree $d_{3}$ and has no critical point;
	\item[(iii)] $F|_{A(\gamma_{3,0},\delta_{3,c}^{+})}$ continuously extends to $\gamma_{3,0}\cup\delta_{3,c}^{+}$ by two degree $d_{3}$ coverings $F|_{\gamma_{3,0}}:\gamma_{3,0}\rightarrow\gamma_{0}$ and $F|_{\delta_{3,c}^{+}}:\delta_{3,c}^{+}\rightarrow\delta_{c}^{+}$.
\end{description}
\end{lem}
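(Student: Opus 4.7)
The plan is to mimic the strategy of Lemma \ref{LemPreimage}: construct two equipotentials so that the source and target annuli have moduli in ratio $1:d_3$, and realize $F|_{A(\gamma_{3,0},\delta_{3,c}^+)}$ as a composition of two Riemann mappings with $z\mapsto z^{d_3}$ in between. The genuinely new ingredient is that the inner target boundary $\delta_c^+$ is allowed to be an arbitrary quasicircle around $c$: no equipotential of $B(\widehat{z_0})$ could separate $c$ from both $\gamma_0$ and $\gamma_{3,1}$ simultaneously, since equipotentials there are concentric around $\widehat{z_0}$. The hard part will be to choose $\delta_c^+$ so that $\modulus(A(\gamma_0,\delta_c^+))$ is small enough for $\delta_{3,c}^+$ to fit strictly between $\beta_{3,0}$ and $\beta_3^-$.

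Quantitatively, the nesting $L_0(\beta_{3,0})>L_0(\gamma_{3,0})>L_0(\delta_{3,c}^+)>L_0(\beta_3^-)$ required by \textbf{(i)} is, through additivity of modulus for nested concentric equipotentials, equivalent to
$$\modulus(A(\beta_{3,0},\gamma_{3,0})) + \tfrac{1}{d_3}\modulus(A(\gamma_0,\delta_c^+)) < \modulus(A(\beta_{3,0},\beta_3^-)).$$
Using $\beta_{3,1}=\beta_3^+$ and the formula $\modulus(A(\beta_{3,1},\beta_{3,0}))=\frac{1}{d_3}\modulus(A(\beta_1,\beta_0))$ from Lemma \ref{LemPreimage}, the right-hand side rewrites as $\modulus(A(\beta_3^+,\beta_3^-))-\frac{1}{d_3}\modulus(A(\beta_1,\beta_0))$, which is strictly positive by the fourth inequality of (\ref{EqSysIneq}) in Lemma \ref{LemEquip}.

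I would carry out the construction in three steps. First, build $\delta_c^+$ as the smoothed boundary of a thin $\eta$-neighborhood of a Jordan arc $\sigma$ in $A(\gamma_0,\gamma_{3,1})$ joining $c$ to a point at distance $2\eta$ from $\gamma_0$. This gives a quasicircle lying in $A(\gamma_0,\gamma_{3,1})$ and enclosing $c$; as $\eta\to 0$ the two boundary components $\gamma_0$ and $\delta_c^+$ get pinched together near the endpoint of $\sigma$, forcing $\modulus(A(\gamma_0,\delta_c^+))\to 0$, so I may prescribe this modulus to be strictly less than $d_3\modulus(A(\beta_{3,0},\beta_3^-))$. Second, pick $\gamma_{3,0}$ to be an equipotential with $L_0(\beta_{3,0})>L_0(\gamma_{3,0})$ and $\modulus(A(\beta_{3,0},\gamma_{3,0}))$ small enough to satisfy the displayed strict inequality. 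Third, define $\delta_{3,c}^+$ to be the unique equipotential inside $\gamma_{3,0}$ with $\modulus(A(\gamma_{3,0},\delta_{3,c}^+))=\frac{1}{d_3}\modulus(A(\gamma_0,\delta_c^+))$; the displayed inequality then forces $L_0(\delta_{3,c}^+)>L_0(\beta_3^-)$, establishing \textbf{(i)}.

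For \textbf{(ii)} and \textbf{(iii)} I would copy almost verbatim the Riemann-mapping construction in Lemma \ref{LemPreimage}. Let $r\in(0,1)$ be defined by $\frac{1}{2\pi}\log(1/r)=\modulus(A(\gamma_{3,0},\delta_{3,c}^+))$, pick biholomorphic maps $\psi:A(\gamma_{3,0},\delta_{3,c}^+)\to A_r$ and $\Psi:A_{r^{d_3}}\to A(\gamma_0,\delta_c^+)$ (both exist because the corresponding moduli match, and they extend continuously to the boundaries by Carathéodory's theorem since equipotentials are analytic Jordan curves and $\delta_c^+$ is a quasicircle hence a Jordan curve), and after post-composing $\Psi$ with a rotation so that the outer boundaries correspond, set $F|_{A(\gamma_{3,0},\delta_{3,c}^+)}=\Psi\circ(z\mapsto z^{d_3})\circ\psi$. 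This map is holomorphic without critical points (the derivative of $z\mapsto z^{d_3}$ being nonvanishing on $A_r$), and its continuous boundary extensions are the prescribed degree $d_3$ coverings $\gamma_{3,0}\to\gamma_0$ and $\delta_{3,c}^+\to\delta_c^+$.
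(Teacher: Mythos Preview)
Your argument is correct and follows the paper's proof almost exactly: choose $\delta_{c}^{+}$ so that $\frac{1}{d_{3}}\modulus(A(\gamma_{0},\delta_{c}^{+}))<\modulus(A(\beta_{3,0},\beta_{3}^{-}))$, then place $\gamma_{3,0}$ and $\delta_{3,c}^{+}$ as concentric equipotentials realizing the required modulus, and finish with the $\Psi\circ(z\mapsto z^{d_{3}})\circ\psi$ construction from Lemma~\ref{LemPreimage}.

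The only notable difference is in how the separating quasicircle $\delta_{c}^{+}$ with small $\modulus(A(\gamma_{0},\delta_{c}^{+}))$ is produced. The paper isolates this as a standalone technical result, Lemma~\ref{LemConfGeomEx} in the Appendix: it passes to the model round annulus by a conformal map and takes $\delta_{c}^{+}$ to be a small Euclidean circle nearly tangent to $\gamma_{0}$, then bounds the modulus via an explicit extremal-length computation. Your ad hoc construction --- the boundary of a thin $\eta$-tube around an arc from $c$ toward $\gamma_{0}$ --- achieves the same pinching and is perfectly valid; the intuitive claim that the modulus tends to $0$ as $\eta\to 0$ is immediate from extremal length (the family of curves joining the two boundary components contains arbitrarily short curves). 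The paper's route has the advantage of being reusable and coming with an explicit estimate, while yours is more direct and avoids a detour through the Appendix.
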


\begin{proof}
Applying Lemma \ref{LemConfGeomEx}, we get a quasicircle $\delta_{c}^{+}$ in $A(\gamma_{0},\gamma_{3,1})$ which separates $c$ from $\gamma_{0}\cup\gamma_{3,1}$ and such that
$$\frac{1}{d_{3}}\modulus(A(\gamma_{0},\delta_{c}^{+}))<\modulus(A(\beta_{3,0},\beta_{3}^{-})).$$
Therefore we can find two equipotentials $\gamma_{3,0}$ and $\delta_{3,c}^{+}$ in $B(\widehat{z_{0}})$ so that
\begin{eqnarray*}
	&& L_{0}(\beta_{3,0})>L_{0}(\gamma_{3,0})>L_{0}(\delta_{3,c}^{+})>L_{0}(\beta_{3}^{-}) \\
	& \text{and}\ & \frac{1}{2\pi}\log\left(\frac{L_{0}(\gamma_{3,0})}{L_{0}(\delta_{3,c}^{+})}\right)=\modulus(A(\gamma_{3,0},\delta_{3,c}^{+}))=\frac{1}{d_{3}}\modulus(A(\gamma_{0},\delta_{c}^{+})).
\end{eqnarray*}
The point \textbf{(i)} holds by definition. For the two other points, the proof may be achieved as that one of Lemma \ref{LemPreimage}.
\end{proof}

Figure \ref{FigLastStep} depicts the equipotentials involved in Lemma \ref{LemLastStep1} and the map $F|_{A(\gamma_{3,0},\delta_{3,c}^{+})}$.

\begin{figure}[!htb]
	\begin{center}
	\includegraphics[width=\textwidth]{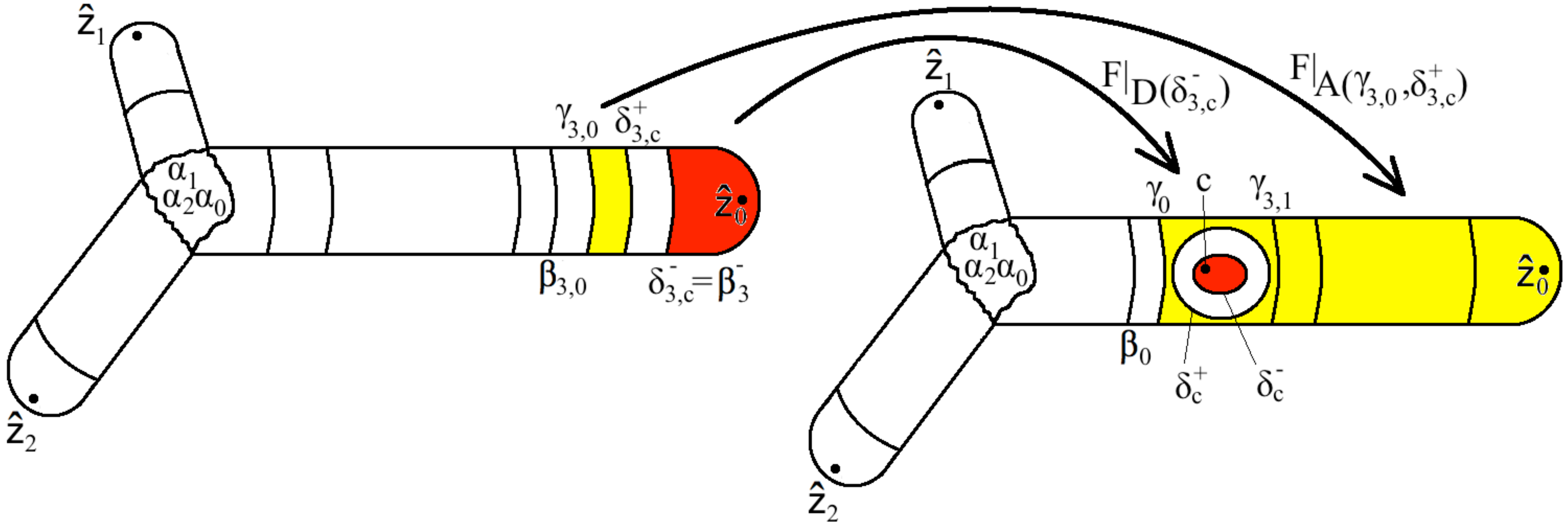}
	\caption{The maps $F|_{A(\gamma_{3,0},\delta_{3,c}^{+})}$ and $F|_{D(\delta_{3,c}^{-})}$ coming from Lemma \ref{LemLastStep1} and Lemma \ref{LemLastStep2} displayed on the Riemann sphere which is topologically distorted to emphasize the three domains $B(\widehat{z_{0}})$, $B(\widehat{z_{1}})$, and $B(\widehat{z_{2}})$ (compare with Figure \ref{FigPersianCarpetTree}.c).} \label{FigLastStep}
	\end{center}
\end{figure}

It remains to define $F$ near $\widehat{z_{0}}$. Let $\delta_{c}^{-}$ be an arbitrary quasicircle which separates $c$ from $\delta_{c}^{+}$. We slightly improve the notation by denoting $\delta_{3,c}^{-}$ the equipotential $\beta_{3}^{-}$ keeping in mind that $\delta_{3,c}^{-}$ will be mapped onto $\delta_{c}^{-}$ by a degree $d_{3}$ covering. Finally, denote by $D(\delta_{3,c}^{-})$ the open disk bounded by $\delta_{3,c}^{-}$ and containing $\widehat{z_{0}}$, and by $D(\delta_{c}^{-})$ the open disk bounded by $\delta_{c}^{-}$ and containing $c$.

\begin{lem}\label{LemLastStep2}
There exists a holomorphic branched covering $F|_{D(\delta_{3,c}^{-})}:D(\delta_{3,c}^{-})\rightarrow D(\delta_{c}^{-})$ such that:
\begin{description}
	\item[(i)] $F|_{D(\delta_{3,c}^{-})}$ has degree $d_{3}$ and has only one critical point which is $\widehat{z_{0}}$ with $F|_{D(\delta_{3,c}^{-})}(\widehat{z_{0}})=c$;
	\item[(ii)] $F|_{D(\delta_{3,c}^{-})}$ continuously extends to $\delta_{3,c}^{-}$ by a degree $d_{3}$ covering $F|_{\delta_{3,c}^{-}}:\delta_{3,c}^{-}\rightarrow\delta_{c}^{-}$.
\end{description}
\end{lem}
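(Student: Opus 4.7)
The proof of Lemma \ref{LemLastStep2} should be the most routine of all the surgery lemmas, since it involves only two topological disks and no constraint on moduli. The plan is to build $F|_{D(\delta_{3,c}^{-})}$ as a pullback of the power map $z\mapsto z^{d_{3}}$ by Riemann mappings.

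More precisely, let $\phi:\D\rightarrow D(\delta_{3,c}^{-})$ be a Riemann mapping normalized so that $\phi(0)=\widehat{z_{0}}$, and let $\Phi:\D\rightarrow D(\delta_{c}^{-})$ be a Riemann mapping normalized so that $\Phi(0)=c$. Both domains are Jordan domains bounded by quasicircles ($\delta_{3,c}^{-}$ is the equipotential $\beta_{3}^{-}$, hence analytic, and $\delta_{c}^{-}$ is a quasicircle by choice); therefore Carathéodory's theorem, together with quasiconformal reflection across the boundaries, guarantees that $\phi$ and $\Phi$ extend to homeomorphisms from $\overline{\D}$ onto $\overline{D(\delta_{3,c}^{-})}$ and $\overline{D(\delta_{c}^{-})}$ respectively. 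Define
$$F|_{D(\delta_{3,c}^{-})}=\Phi\circ(z\mapsto z^{d_{3}})\circ\phi^{-1}.$$

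This map is evidently holomorphic on $D(\delta_{3,c}^{-})$ and proper of degree $d_{3}$. Its only critical point is $\phi(0)=\widehat{z_{0}}$, at which the local degree equals $d_{3}$, and $F|_{D(\delta_{3,c}^{-})}(\widehat{z_{0}})=\Phi(0)=c$, which gives point \textbf{(i)}. For point \textbf{(ii)}, the boundary extension is obtained by composing the homeomorphic extensions of $\phi^{-1}$ and $\Phi$ with the unramified $d_{3}$-to-$1$ covering $\partial\D\rightarrow\partial\D$ induced by $z\mapsto z^{d_{3}}$; this yields a continuous degree $d_{3}$ covering $F|_{\delta_{3,c}^{-}}:\delta_{3,c}^{-}\rightarrow\delta_{c}^{-}$.

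There is no genuine obstacle here: unlike the previous lemmas, no modulus inequality has to be arranged, because the target disk $D(\delta_{c}^{-})$ was chosen freely (bounded by an \emph{arbitrary} quasicircle $\delta_{c}^{-}$ separating $c$ from $\delta_{c}^{+}$) and any two topological disks are conformally equivalent. The only delicate point is the regularity of the boundary extensions, which is handled by the standard fact that Riemann mappings onto quasidisks extend to global quasiconformal homeomorphisms of $\CC$.
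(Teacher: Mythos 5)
Your proposal is correct and follows essentially the same route as the paper: both define $F|_{D(\delta_{3,c}^{-})}=\Phi\circ(z\mapsto z^{d_{3}})\circ\phi^{-1}$ using Riemann mappings $\phi,\Phi$ normalized to send $0$ to $\widehat{z_{0}}$ and $c$ respectively, and both invoke the quasicircle boundaries to extend these to homeomorphisms of the closed disks. The additional remarks on Carathéodory and quasiconformal reflection make the boundary extension step more explicit but do not change the argument.
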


\begin{proof}
Let $\phi:\D\rightarrow D(\delta_{3,c}^{-})$ be any Riemann mapping of $D(\delta_{3,c}^{-})$ such that $\phi(0)=\widehat{z_{0}}$, and let $\Phi:\D\rightarrow D(\delta_{c}^{-})$ be any Riemann mapping of $D(\delta_{c}^{-})$ such that $\Phi(0)=c$. Since $D(\delta_{3,c}^{-})$ and $D(\delta_{c}^{-})$ are bounded by quasicircles, $\phi$ and $\Phi$ may be continuously extended to $\partial\D$ by homeomorphisms.

Then $F|_{D(\delta_{3,c}^{-})}=\Phi\circ(z\mapsto z^{d_{3}})\circ\phi^{-1}$ gives the result.
\end{proof}

Figure \ref{FigLastStep} depicts the map $F|_{D(\delta_{3,c}^{-})}$ coming from Lemma \ref{LemLastStep2}.

\subsection{Uniformization}

At first we sum up in the following table the definition of $F$ up to there.

\begin{center}
\begin{tabular}{|c|c|c|c|c|}
	\hline
	domains & images & {\begin{tabular}{c} cont. extensions \\ on boundaries \end{tabular}} & {\begin{tabular}{c} critical points \\ with multiplicity \end{tabular}} & critical values \\
	\hline
	\hline
	\rule[-10pt]{0pt}{30pt} $D(\beta_{0,1})$ & $\CC$ & $\beta_{0,1}\longlong{d_{0}:1}\beta_{1}$ & {$\begin{array}{c} \widehat{z_{1}}\ \text{with mult.}\ d_{1}-1 \\ \widehat{z_{2}}\ \text{with mult.}\ d_{2}-1 \end{array}$} & {$\begin{array}{c} F(\widehat{z_{1}})=\widehat{z_{2}} \\ F(\widehat{z_{2}})=\widehat{z_{0}} \end{array}$} \\
	\hline
	\rule[-10pt]{0pt}{30pt} $A(\gamma_{0,1},\gamma_{3,1})$ & $D(\gamma_{1})$ & {$\begin{array}{c} \gamma_{0,1}\longlong{d_{0}:1}\gamma_{1} \\ \gamma_{3,1}\longlong{d_{3}:1}\gamma_{1} \end{array}$} & {$\begin{array}{c} c\in\{d_{0}+d_{3}\ \text{crit. pts} \\ \text{counted with mult.}\} \end{array}$} & {$\begin{array}{c} F(c)=\widehat{z_{1}} \\ \text{and others} \end{array}$} \\
	\hline
	\rule[-10pt]{0pt}{30pt} $A(\beta_{3,1},\beta_{3,0})$ & $A(\beta_{1},\beta_{0})$ & {$\begin{array}{c} \beta_{3,1}\longlong{d_{3}:1}\gamma_{1} \\ \beta_{3,0}\longlong{d_{3}:1}\beta_{0} \end{array}$} & $\emptyset$ & $\emptyset$ \\
	\hline
	\rule[-10pt]{0pt}{30pt} $A(\gamma_{3,0},\delta_{3,c}^{+})$ & $A(\gamma_{0},\delta_{c}^{+})$ & {$\begin{array}{c} \gamma_{3,0}\longlong{d_{3}:1}\gamma_{0} \\ \delta_{3,c}^{+}\longlong{d_{3}:1}\delta_{c}^{+} \end{array}$} & $\emptyset$ & $\emptyset$ \\
	\hline
	\rule[-10pt]{0pt}{30pt} $D(\delta_{3,c}^{-})$ & $D(\delta_{c}^{-})$ & $\delta_{3,c}^{-}\longlong{d_{3}:1}\delta_{c}^{-}$ & $\widehat{z_{0}}$ with mult. $d_{3}-1$ & $F(\widehat{z_{0}})=c$ \\
	\hline
\end{tabular}
\end{center}

So $F$ is holomorphically defined on $H=D(\beta_{0,1})\cup A(\gamma_{0,1},\gamma_{3,1})\cup A(\beta_{3,1},\beta_{3,0})\cup A(\gamma_{3,0},\delta_{3,c}^{+})\cup D(\delta_{3,c}^{-})$ with continuous extension on the boundary. It remains to define $F$ on the complement $Q=\CC-\overline{H}=A(\beta_{0,1},\gamma_{0,1})\cup A(\gamma_{3,1},\beta_{3,1})\cup A(\beta_{3,0},\gamma_{3,0})\cup A(\delta_{3,c}^{+},\delta_{3,c}^{-})$. This is done in the following lemma.

\begin{lem}\label{LemQcExtension}
The map $F|_{\overline{H}}:\overline{H}\rightarrow\CC$ extends to a quasiregular map $F:\CC\rightarrow\CC$ by quasiconformal coverings defined on each connected component of $Q=\CC-\overline{H}$.

Moreover there exists an open subset $E\subset H$ such that $F(E)\subset E$ and $F^{2}(\overline{Q})\subset E$.
\end{lem}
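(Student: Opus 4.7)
Each of the four components of $Q$ is an open annulus whose boundary quasicircles already carry, from the definition of $F|_{\overline{H}}$, topological coverings onto the two boundary components of some open annulus in the image. An inspection of the summary table shows that for each component of $Q$ the two boundary coverings have the \emph{same} degree: $d_{0}$ on $A(\beta_{0,1},\gamma_{0,1})$, and $d_{3}$ on each of $A(\gamma_{3,1},\beta_{3,1})$, $A(\beta_{3,0},\gamma_{3,0})$, $A(\delta_{3,c}^{+},\delta_{3,c}^{-})$, with respective target annuli $A(\beta_{1},\gamma_{1})$, $A(\beta_{1},\gamma_{1})$, $A(\beta_{0},\gamma_{0})$, $A(\delta_{c}^{+},\delta_{c}^{-})$. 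Thus each component falls under the standard annular extension lemma of quasiconformal surgery: uniformize the source and target annuli to round annuli $\{r<|\zeta|<1\}$ by Riemann maps (which extend to the boundaries as homeomorphisms since all boundary curves are quasicircles), and interpolate the two degree-$d$ angular boundary lifts radially by a smooth homotopy, for instance $\rho e^{i\theta}\mapsto \rho^{\log r'/\log r}e^{ig(\rho,\theta)}$ with $g$ smooth of degree $d$ in $\theta$. Pulled back by the uniformizations and glued to $F|_{\overline{H}}$ along the common boundary quasicircles, this produces the required quasiregular extension $F:\CC\rightarrow\CC$ of uniformly bounded dilatation.

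\textbf{Step 2: location of $F^{2}(\overline{Q})$.}
I track the forward $F$-orbits of each component of $\overline{Q}$. The two annuli $\overline{A(\beta_{0,1},\gamma_{0,1})}$ and $\overline{A(\gamma_{3,1},\beta_{3,1})}$ are mapped by $F$ into the closed annulus with boundary equipotentials $\beta_{1},\gamma_{1}$, lying inside $\overline{B(\widehat{z_{1}})}\subset D(\beta_{0,1})$, and then by $F=\widehat{f}$ into a closed annulus in $B(\widehat{z_{2}})$ bounded by the equipotentials of levels $L_{1}(\beta_{1})^{d_{1}}$ and $L_{1}(\gamma_{1})^{d_{1}}$. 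The two annuli $\overline{A(\beta_{3,0},\gamma_{3,0})}$ and $\overline{A(\delta_{3,c}^{+},\delta_{3,c}^{-})}$ are mapped by $F$ into compact subsets of $A(\gamma_{0,1},\gamma_{3,1})\setminus\{c\}$, and then by the folding map into compact subsets of $D(\gamma_{1})\subset B(\widehat{z_{1}})$. Hence $F^{2}(\overline{Q})$ is compactly contained in $\overline{D(\gamma_{1})}\cup\overline{D(\widehat{f}(\beta_{1}))}$, strictly inside $H$.

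\textbf{Step 3: construction of $E$.}
I build $E$ as a union $U_{0}\cup U_{c}\cup U_{1}\cup U_{2}$ of quasi-disk neighborhoods around the super-attracting $4$-cycle $\widehat{z_{0}}\mapsto c\mapsto \widehat{z_{1}}\mapsto \widehat{z_{2}}\mapsto \widehat{z_{0}}$. Set $U_{1}:=D(\gamma_{1})$ and choose an equipotential $\beta_{2}^{*}$ in $B(\widehat{z_{2}})$ of level satisfying simultaneously $L_{1}(\beta_{1})^{d_{1}}\leqslant L_{2}(\beta_{2}^{*})$ (so that $U_{2}:=D(\beta_{2}^{*})$ contains $F(U_{1})=D(\widehat{f}(\gamma_{1}))$ together with the $F^{2}$-images of the first two $Q$-annuli) and $L_{2}(\beta_{2}^{*})^{d_{2}}<L_{0}(\delta_{3,c}^{-})$ (so that $U_{0}:=\widehat{f}(U_{2})=D(\widehat{f}(\beta_{2}^{*}))$ sits inside $D(\delta_{3,c}^{-})$, hence in the domain of $F$ provided by Lemma~\ref{LemLastStep2}). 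Both inequalities are feasible thanks to $d_{2}\geqslant 2$ and $L_{1}(\beta_{1})<1$. Finally let $U_{c}:=F(U_{0})\subset D(\delta_{c}^{-})$, and note that the folding map sends $U_{c}$ back into $U_{1}=D(\gamma_{1})$ once $\beta_{2}^{*}$ (hence $U_{0}$, hence $U_{c}$) is chosen sufficiently small near $c$. By construction the chain $F(U_{0})\subset U_{c}$, $F(U_{c})\subset U_{1}$, $F(U_{1})\subset U_{2}$, $F(U_{2})\subset U_{0}$ holds, giving $F(E)\subset E$; and Step~2 gives $F^{2}(\overline{Q})\subset U_{1}\cup U_{2}\subset E$.

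\textbf{Main difficulty.} Step~1 is standard quasiconformal surgery but requires some care to ensure that the radial interpolation has dilatation uniformly bounded across all four components and glues smoothly with $F|_{\overline{H}}$ across the quasicircular interfaces. Step~3 is essentially a bookkeeping of equipotential levels, made feasible by the double layer of super-attracting contraction supplied by the exponents $d_{1},d_{2},d_{3}\geqslant 2$ at the cycle points.
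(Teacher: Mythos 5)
Step 1 of your proof is the same interpolation argument as the paper's and is fine. The interesting divergence is in Steps 2–3, where you propose a different invariant set $E$, and this is where the argument breaks.

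\textbf{The main gap.} The compactness claim in Step 2 is wrong, and as a consequence your $E$ does not contain $F^{2}(\overline{Q})$. You assert that $\overline{A(\beta_{3,0},\gamma_{3,0})}$ (and $\overline{A(\delta_{3,c}^{+},\delta_{3,c}^{-})}$) are sent by $F$ into \emph{compact subsets} of the open annulus $A(\gamma_{0,1},\gamma_{3,1})$, and hence by the folding map into compact subsets of the open disk $D(\gamma_1)$. But $F(\beta_{3,0})=\beta_0=\gamma_{0,1}$, which is a boundary curve of $A(\gamma_{0,1},\gamma_{3,1})$; so $F(\overline{A(\beta_{3,0},\gamma_{3,0})})=\overline{A(\gamma_{0,1},\gamma_0)}$ touches $\gamma_{0,1}$, and therefore $F^{2}(\overline{A(\beta_{3,0},\gamma_{3,0})})$ touches $F(\gamma_{0,1})=\gamma_1$. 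Since the only piece of your $E$ inside $B(\widehat{z_1})$ is the \emph{open} disk $U_1=D(\gamma_1)$, which does not contain its boundary $\gamma_1$, the inclusion $F^{2}(\overline{Q})\subset E$ fails. The paper's choice sidesteps exactly this: it takes the piece of $E$ in $B(\widehat{z_1})$ to be $D(\beta_{1,2})$, which contains the whole closed disk $\overline{D(\beta_1)}\supset\gamma_1$ by the second inequality in Lemma~\ref{LemEquip}. More generally, the paper builds $E$ out of \emph{preimages} of the equipotentials $\beta_2,\beta_3^{-},\delta_c^{-}$ and, crucially, out of the full annulus $A(\gamma_{0,1},\gamma_{3,1})$ rather than a small disk $U_c$ around $c$; besides avoiding the boundary problem, this makes $E$ contain all the $d_0+d_3$ critical points of the folding map, which is used in Lemma~\ref{LemDegree} to conclude hyperbolicity.

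\textbf{A secondary issue.} The feasibility justification "both inequalities are feasible thanks to $d_2\geqslant 2$ and $L_1(\beta_1)<1$" is not correct. First, assumption~(\ref{H1}) does not force $d_2\geqslant 2$ (e.g.\ $(d_0,d_1,d_2)=(3,3,1)$ satisfies~(\ref{H1})). Second, even when $d_2\geqslant 2$, the existence of a level between $L_1(\beta_1)^{d_1}$ and $L_0(\beta_3^{-})^{1/d_2}$ is not a consequence of these two facts: it depends on the relative sizes of the equipotential levels produced in Lemma~\ref{LemEquip}, i.e.\ on the chained moduli inequalities of the linear system~(\ref{EqSysIneq}). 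If you want to go this route you must invoke those inequalities explicitly (and you would still need strict inequality $L_1(\beta_1)^{d_1}<L_2(\beta_2^{*})$, not $\leqslant$, to get compact containment of the $F^{2}$ image of the first two $Q$-annuli). The paper's preimage construction renders these inequalities automatic via Lemma~\ref{LemEquip}, which is why it is the cleaner choice.
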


In particular, notice that the quasiregular map $F:\CC\rightarrow\CC$ has no more critical points than those coming from the holomorphic restriction $F|_{H}:H\rightarrow\CC$.

\begin{proof}
Remark that every connected component of $Q$ is an open annulus whose boundary is the disjoint union of two quasicircles where $F$ realizes two coverings of same degree (and same orientation). By interpolation, $F$ may be continuously extended to each connected component of $Q$ by a covering of degree corresponding to that one on the boundary. Since all the connected components of the boundary of $Q$, together with their images by $F$, are quasicircles, each interpolation may be carefully done in such a way that the resulting map is actually quasiconformal on the Riemann sphere. In short, $F$ quasiregularly extends to $Q$ by
\begin{itemize}
	\item a degree $d_{0}$ quasiconformal covering $F|_{A(\beta_{0,1},\gamma_{0,1})}:A(\beta_{0,1},\gamma_{0,1})\rightarrow A(\beta_{1},\gamma_{1})$;
	\item a degree $d_{3}$ quasiconformal covering $F|_{A(\gamma_{3,1},\beta_{3,1})}:A(\gamma_{3,1},\beta_{3,1})\rightarrow A(\gamma_{1},\beta_{1})$;
	\item a degree $d_{3}$ quasiconformal covering $F|_{A(\beta_{3,0},\gamma_{3,0})}:A(\beta_{3,0},\gamma_{3,0})\rightarrow A(\beta_{0},\gamma_{0})$;
	\item a degree $d_{3}$ quasiconformal covering $F|_{A(\delta_{3,c}^{+},\delta_{3,c}^{-})}:A(\delta_{3,c}^{+},\delta_{3,c}^{-})\rightarrow A(\delta_{c}^{+},\delta_{c}^{-})$.
\end{itemize}
In particular, we have $F(Q)=A(\beta_{1},\gamma_{1})\cup A(\beta_{0},\gamma_{0})\cup A(\delta_{c}^{+},\delta_{c}^{-})$ (see figure \ref{FigFinalDynamics} to follow the continuation of the proof).

Now denote by $\beta_{1,2}$ the preimage of $\beta_{2}$ in $B(\widehat{z_{1}})$ under $F$ (thus under $\widehat{f}$) and similarly by $\beta_{2,3}^{-}$ the preimage of $\beta_{3}^{-}$ in $B(\widehat{z_{2}})$ (see Figure \ref{FigEquipotentials}). Moreover denote by $D(\beta_{1,2})$ the open disk bounded by $\beta_{1,2}$ and containing $\widehat{z_{1}}$, and by $D(\beta_{2,3}^{-})$ the open disk bounded by $\beta_{2,3}^{-}$ and containing $\widehat{z_{2}}$. Finally, let $E$ be the union $D(\beta_{1,2})\cup D(\beta_{2,3}^{-})\cup D(\delta_{3,c}^{-})\cup A(\gamma_{0,1},\gamma_{3,1})$.

At first remark that $E$ is an open subset of $H=D(\beta_{0,1})\cup A(\gamma_{0,1},\gamma_{3,1})\cup A(\beta_{3,1},\beta_{3,0})\cup A(\gamma_{3,0},\delta_{3,c}^{+})\cup D(\delta_{3,c}^{-})$. Indeed we have $\overline{D(\beta_{1,2})}\cup\overline{D(\beta_{2,3}^{-})}\subset D(\beta_{0,1})$ from definition of $D(\beta_{0,1})$.

Moreover, it follows from definition of $F$ on $H$ that $F(E)=D(\beta_{2})\cup D(\beta_{3}^{-})\cup D(\delta_{c}^{-})\cup D(\gamma_{1})$ where $D(\beta_{2})$ denotes the open disk bounded by $\beta_{2}$ and containing $\widehat{z_{2}}$, and $D(\beta_{3}^{-})=D(\delta_{3,c}^{-})$ is the open disk bounded by $\beta_{3}^{-}=\delta_{3,c}^{-}$ and containing $\widehat{z_{0}}$.

Furthermore, according to the whole construction, we have
\begin{itemize}
	\item from Lemma \ref{LemEquip} and definition of $\gamma_{1}$: $\overline{A(\beta_{1},\gamma_{1})}\cup\overline{D(\gamma_{1})}\subset D(\beta_{1,2})$ and $\overline{D(\beta_{2})}\subset D(\beta_{2,3}^{-})$;
	\item from definition of $\gamma_{0}$ and recalling $\beta_{0}=\gamma_{0,1}$: $A(\beta_{0},\gamma_{0})\subset A(\gamma_{0,1},\gamma_{3,1})$;
	\item from definitions of $\delta_{c}^{-}$, $\delta_{c}^{+}$ and $\gamma_{0}$: $\overline{A(\delta_{c}^{+},\delta_{c}^{-})}\cup\overline{D(\delta_{c}^{-})}\subset A(\gamma_{0},\gamma_{3,1})\subset A(\gamma_{0,1},\gamma_{3,1})$.
\end{itemize}

Putting everything together gives the following diagram in which the arrows $\overset{F}{\longrightarrow}$ stand for images under $F$, $\overset{\subset}{\longrightarrow}$ stand for inclusions, $\overset{\subset\subset}{\longrightarrow}$ stand for compact inclusions (namely $A\overset{\subset\subset}{\longrightarrow}B$ if and only if $\overline{A}\subset B$) and $\overset{=}{\longrightarrow}$ stands for equality.

$$\xymatrix @!0 @R=4pc @C=3pc {
	Q \ar[d]^{F} & = & A(\beta_{0,1},\gamma_{0,1}) \ar[d]^{F} & \cup & A(\gamma_{3,1},\beta_{3,1}) \ar[dll]_{F} & \cup & A(\beta_{3,0},\gamma_{3,0}) \ar[dll]_{F} & \cup & A(\delta_{3,c}^{+},\delta_{3,c}^{-}) \ar[dll]_{F} && \\
	F(Q) \ar[d]^{\subset} & = & A(\beta_{1},\gamma_{1}) \ar[d]^{\subset\subset} & \cup & A(\beta_{0},\gamma_{0}) \ar[drrrr]^{\subset} & \cup & A(\delta_{c}^{+},\delta_{c}^{-}) \ar[drr]^{\subset\subset} &&&& \\
	E \ar[d]^{F} & = & D(\beta_{1,2}) \ar[drr]^{F} & \cup & D(\beta_{2,3}^{-}) \ar[drr]^(0.4){F} & \cup & D(\delta_{3,c}^{-}) \ar[drr]^(0.66){F} & \cup & A(\gamma_{0,1},\gamma_{3,1}) \ar@(d,u)[dllllll]_(0.4){F} && \\
	F(E) \ar[d]^{\subset} & = & D(\gamma_{1}) \ar[d]^{\subset\subset} & \cup & D(\beta_{2}) \ar[d]^{\subset\subset} & \cup & D(\beta_{3}^{-}) \ar[d]^{=} & \cup & D(\delta_{c}^{-}) \ar[d]^{\subset\subset} && \\
	E \ar[d]^{\subset} & = & D(\beta_{1,2}) \ar[d]^{\subset\subset} & \cup & D(\beta_{2,3}^{-}) \ar[dll]_{\subset\subset} & \cup & D(\delta_{3,c}^{-}) \ar@(d,u)[drrrr]^(0.66){=} & \cup & A(\gamma_{0,1},\gamma_{3,1}) \ar@(d,u)[dllll]_(0.66){=} && \\
	H & = & D(\beta_{0,1}) & \cup & A(\gamma_{0,1},\gamma_{3,1}) & \cup & A(\beta_{3,1},\beta_{3,0}) & \cup & A(\gamma_{3,0},\delta_{3,c}^{+}) & \cup & D(\delta_{3,c}^{-})
}$$

In particular, we deduce that $F(Q)\subset E$ and $F(E)\subset E\subset H$. Furthermore, following compact inclusions, it turns out that $F^{2}(\overline{Q})\subset E$.
\end{proof}

\begin{figure}[!htb]
	\begin{center}
	\includegraphics[width=\textwidth]{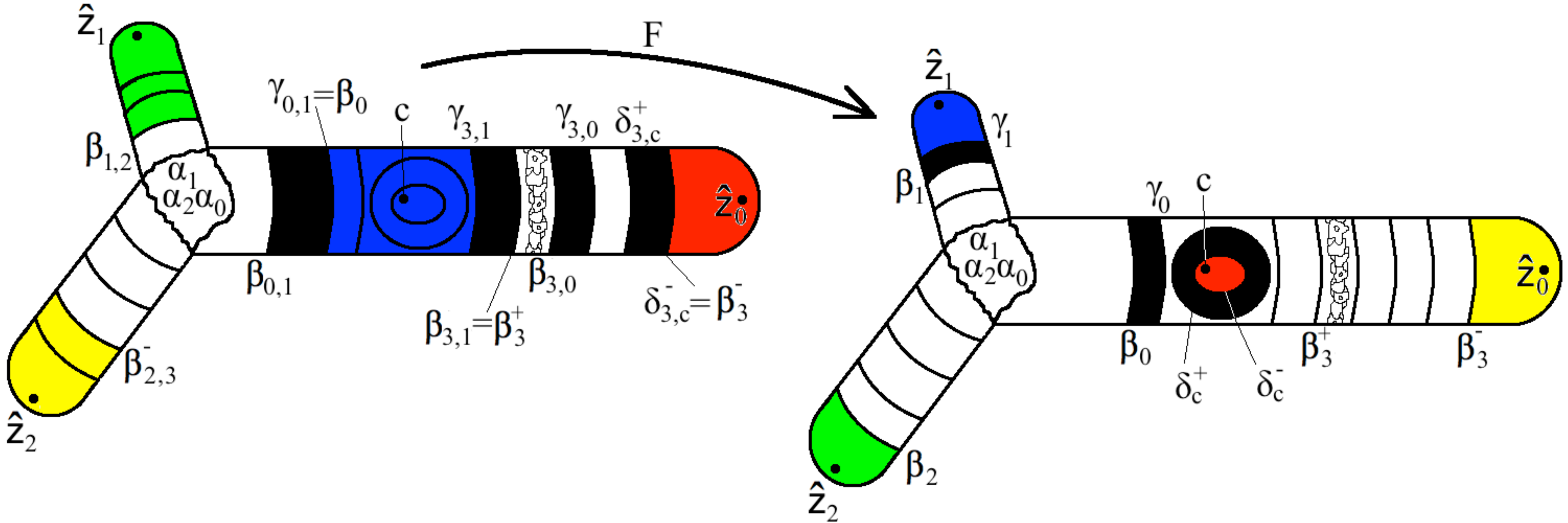}
	\caption{The map $F$ coming from Lemma \ref{LemQcExtension}. On the left topological sphere, the black area stands for $Q$ and the gray area stands for $E$. On the right topological sphere, the black area stands for $F(Q)$ and the gray area stands for $F(E)$.}
	\label{FigFinalDynamics}
	\end{center}
\end{figure}

Now we have a quasiregular map $F$ from the Riemann sphere to itself whose dynamics follows that one of the weighted dynamical tree $(\HH_{P},w)$ (see Figure \ref{FigPersianCarpetTree}.c). We need to find a holomorphic map $f$ conjugated to $F$ so that $f$ follows the same dynamics as well ($f$ should satisfy the requirements of Theorem \ref{ThmMain1} and Theorem \ref{ThmMain2}). To do so, we will apply the Shishikura's fundamental lemma for quasiconformal surgery (stated for the first time in \cite{QuasiconformalSurgery}) that we recall below.

\newpage 

\begin{lem}[Shishikura's fundamental lemma for quasiconformal surgery]\label{LemQuasiconformalSurgery}
Let $g:\CC\rightarrow\CC$ be a quasiregular map. Assume there are an open set $E\subset\CC$ and an integer $N\geqslant 0$ which satisfy the following conditions:
\begin{itemize}
	\item $g(E)\subset E$;
	\item $g$ is holomorphic on $E$;
	\item $g$ is holomorphic on an open set containing $\CC-g^{-N}(E)$.
\end{itemize}
Then there exists a quasiconformal map $\varphi:\CC\rightarrow\CC$ such that the map $\varphi\circ g\circ\varphi^{-1}$ is holomorphic.
\end{lem}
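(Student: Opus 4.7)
The plan is to apply the Measurable Riemann Mapping Theorem of Ahlfors and Bers after constructing a $g$-invariant Beltrami coefficient on $\CC$ with sup norm strictly less than $1$; the quasiconformal map $\varphi$ produced by the theorem will then automatically conjugate $g$ to a $1$-quasiregular map, which by Weyl's lemma is holomorphic.

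To define the Beltrami coefficient $\mu$, I would set $U_{k} = g^{-k}(E)$ for every $k\geqslant 0$, so that $U_{0} = E\subset U_{1}\subset U_{2}\subset\cdots$ by forward invariance, and $g$ is holomorphic on an open neighborhood of $\CC - U_{N}$. For each $z\in\CC$ denote by $n(z)\in\{0,1,\ldots\}\cup\{\infty\}$ the smallest integer such that $g^{n(z)}(z)\in E$, with the convention $n(z)=\infty$ if the orbit of $z$ never meets $E$. Define $\mu(z) = \mu_{g^{n(z)}}(z)$ when $n(z)<\infty$ and $\mu(z)=0$ otherwise, where $\mu_{g^{n}}$ stands for the Beltrami coefficient of the quasiregular iterate $g^{n}$ (well-defined almost everywhere). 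The level sets $\{n(z)=k\}$ are Borel since $E$ is open and $g$ is continuous, so $\mu$ is measurable. Moreover $n(g(z)) = n(z)-1$ whenever $n(z)\geqslant 1$, and the chain rule for Beltrami coefficients of compositions translates this into the invariance relation $g^{*}\mu = \mu$ almost everywhere.

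The crux of the proof is the uniform bound $\|\mu\|_{\infty}<1$, and this is where the hypothesis on $N$ plays its role. Fix $z$ with $n(z) = n < \infty$ and consider the orbit $z, g(z), \ldots, g^{n-1}(z)$. At each orbit point $g$ is either holomorphic or quasiregular with dilatation at most the global maximal dilatation $K_{g}<\infty$ of $g$. Since $g$ is holomorphic on an open set containing $\CC-U_{N}$, the orbit points at which $g$ may fail to be holomorphic must lie in $U_{N}$; but $g^{j}(z)\in U_{N}$ if and only if $g^{N+j}(z)\in E$, which forces $j\geqslant n-N$. Hence at most $N$ of the $n$ orbit points can be non-holomorphic, so the dilatation of $g^{n}$ at $z$ is bounded by $K_{g}^{N}$. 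Consequently $|\mu(z)|\leqslant (K_{g}^{N}-1)/(K_{g}^{N}+1) < 1$ uniformly in $z$.

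Once $\|\mu\|_{\infty}<1$ is established, the Measurable Riemann Mapping Theorem produces a quasiconformal homeomorphism $\varphi:\CC\rightarrow\CC$ whose Beltrami coefficient equals $\mu$ almost everywhere. The invariance $g^{*}\mu = \mu$ translates into the fact that the conjugate $\varphi\circ g\circ\varphi^{-1}$ has Beltrami coefficient $0$ almost everywhere, and since this conjugate is continuous and quasiregular, Weyl's lemma implies it is holomorphic on $\CC$. The main obstacle throughout is the dilatation bound in the third paragraph, which crucially exploits all three hypotheses: forward invariance $g(E)\subset E$ to ensure orbits remain in $E$ once they enter it, holomorphy on $E$ to make the structure $\mu=0$ there self-consistent, and holomorphy outside $g^{-N}(E)$ to limit to $N$ the number of non-holomorphic steps along any finite orbit reaching $E$.
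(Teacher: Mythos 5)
Your proof is correct, and it is precisely the standard argument from the references the paper cites (Shishikura's original paper and the Branner--Fagella book); the paper itself gives no proof of this lemma, merely referring the reader to those sources. The construction of the $g$-invariant Beltrami coefficient by iterated pullback of the standard structure from $E$ along the first-return time $n(z)$, the uniform dilatation bound $K_g^N$ for $g^{n(z)}$ at $z$ (obtained from the observation that among $z,g(z),\dots,g^{n(z)-1}(z)$ at most $N$ points lie in $g^{-N}(E)$, hence at most $N$ applications of $g$ along the orbit can fail to be holomorphic), and the final appeal to the Measurable Riemann Mapping Theorem together with Weyl's lemma are exactly the steps of Shishikura's proof.
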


The result stated in \cite{QuasiconformalSurgery} is a little more general but it easily implies the more explicit statement of Lemma \ref{LemQuasiconformalSurgery} (we refer readers to \cite{QuasiconformalSurgery} and \cite{QuasiconformalSurgeryBook} for a proof and more details).

Here our map $F$ satisfies the three assumptions (indeed $F$ is holomorphic on $H$ hence on $E\subset H$ and Lemma \ref{LemQcExtension} implies that $\CC-F^{-2}(E)\subset\CC-\overline{Q}=H$), so applying Lemma \ref{LemQuasiconformalSurgery} gives a holomorphic map $f:\CC\rightarrow\CC$ quasiconformally conjugated to $F:\CC\rightarrow\CC$ as desired.

\begin{lem}\label{LemDegree}
The rational map $f:\CC\rightarrow\CC$ obtained above has degree $\widehat{d}+d_{3}$ and has a super-attracting cycle $\{z_{0},z_{1},z_{2},z_{3}\}$ of period $4$ which is accumulated by every critical orbit. In particular, $f$ is hyperbolic.
\end{lem}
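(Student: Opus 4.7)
The plan is to transfer the four assertions from the quasiregular model $F$ to $f$ via the quasiconformal conjugation $f = \varphi \circ F \circ \varphi^{-1}$ provided by Lemma \ref{LemQuasiconformalSurgery}; since $\varphi$ is a homeomorphism it preserves local degrees at critical points, the combinatorics of orbits, and the multipliers of periodic cycles.

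For the degree I would apply the Riemann--Hurwitz formula to $f$. Reading off the summary table, the critical points of $F$ (and hence of $f$) are $\widehat{z_1}$ with local degree $d_1$, $\widehat{z_2}$ with local degree $d_2$, $\widehat{z_0}$ with local degree $d_3$, together with the $d_0+d_3$ critical points counted with multiplicity produced inside $A(\gamma_{0,1},\gamma_{3,1})$ by Lemma \ref{LemFolding}; the quasiconformal interpolations on $Q$ contribute none, being coverings between annuli. Summing $(\textrm{local degree}-1)$ gives $(d_1-1)+(d_2-1)+(d_3-1)+(d_0+d_3) = d_0+d_1+d_2+2d_3-3$, and using $d_0+d_1+d_2 = 2\widehat{d}+1$ from assumption (\ref{H1}) this equals $2(\widehat{d}+d_3)-2$, so $\deg f = \widehat{d}+d_3$.

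For the cycle I would invoke the construction, which arranges $F(\widehat{z_0}) = c$, $F(c) = \widehat{z_1}$, $F(\widehat{z_1}) = \widehat{z_2}$, and $F(\widehat{z_2}) = \widehat{z_0}$. These four points are pairwise distinct, because $c$ lies in $A(\gamma_{0,1},\gamma_{3,1})$ while the three $\widehat{z_k}$ are the centers of the Böttcher disks; the orbit therefore has period exactly $4$. Since $\widehat{z_0}$ and $c$ are both critical points of $F$, the derivative $(F^4)'$ vanishes at every point of this cycle, and its image under $\varphi$ is the desired super-attracting cycle $\{z_0, z_1, z_2, z_3\}$ of period $4$ for $f$.

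It remains to check that every critical orbit accumulates this cycle, from which hyperbolicity follows by definition. Every critical point of $F$ lies in the forward-invariant set $E$ of Lemma \ref{LemQcExtension}: $\widehat{z_1} \in D(\beta_{1,2})$, $\widehat{z_2} \in D(\beta_{2,3}^-)$, $\widehat{z_0} \in D(\delta_{3,c}^-)$, and $c$ together with the remaining critical points of Lemma \ref{LemFolding} all sit in $A(\gamma_{0,1},\gamma_{3,1})$. Chasing the commutative diagram of compact inclusions in the proof of Lemma \ref{LemQcExtension} yields $F^4(D(\beta_{1,2})) \subseteq D(\gamma_1)$ together with $\overline{D(\gamma_1)} \subset D(\beta_{1,2})$, so the iterates $F^{4n}$ contract $D(\beta_{1,2})$ onto its fixed point $\widehat{z_1}$; hence $D(\beta_{1,2})$ lies in the immediate basin of the super-attracting cycle. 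Since each of the other three pieces of $E$ enters $D(\beta_{1,2})$ within at most three iterations, every critical orbit of $F$, and therefore of $f$, converges to $\{z_0, z_1, z_2, z_3\}$, proving hyperbolicity. The only real work here is bookkeeping through the nested inclusions gathered in Lemmas \ref{LemEquip}--\ref{LemQcExtension}; no new analytic input is required.
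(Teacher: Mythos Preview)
Your proof is correct and follows essentially the same route as the paper: list the critical points of $F$ from the summary table, apply Riemann--Hurwitz to get $\deg f=\widehat d+d_3$, read off the super-attracting $4$-cycle from the construction, and use that every critical point lies in the forward-invariant set $E$ to conclude that all critical orbits accumulate on the cycle. One small imprecision: quasiconformal conjugations do \emph{not} in general preserve multipliers of periodic cycles, so that remark should be dropped; what you actually use (and what suffices) is that $\varphi$ preserves local degrees, so a cycle containing a critical point stays super-attracting.
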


\begin{proof}
Since $f$ is quasiconformally conjugated to $F$, the critical points of $f$ are images under a quasiconformal map $\varphi$ of the critical points of $F$ with same multiplicities. More precisely, the critical points of $f$ are:
\begin{itemize}
	\item $z_{1}=\varphi(\widehat{z_{1}})\in\varphi(D(\beta_{1,2}))\subset\varphi(E)$ with multiplicity $d_{1}-1$;
	\item $z_{2}=\varphi(\widehat{z_{2}})\in\varphi(D(\beta_{2,3}^{-}))\subset\varphi(E)$ with multiplicity $d_{2}-1$;
	\item $d_{0}+d_{3}$ critical points counted with multiplicity in $\varphi(A(\gamma_{0,1},\gamma_{3,1}))\subset\varphi(E)$, which one of them is given by $z_{0}=\varphi(c)$;
	\item $z_{3}=\varphi(\widehat{z_{0}})\in\varphi(D(\delta_{3,c}^{-}))\subset\varphi(E)$ with multiplicity $d_{3}-1$.
\end{itemize}

According to the Riemann-Hurwitz formula, it follows that the number of critical points counted with multiplicity is given by
$$2\deg(f)-2=(d_{1}-1)+(d_{2}-1)+(d_{0}+d_{3})+(d_{3}-1)$$
and hence
$$\deg(f)=\frac{1}{2}(d_{0}+d_{1}+d_{2}-1)+d_{3}=\widehat{d}+d_{3}.$$

Notice that $\{z_{0},z_{1},z_{2},z_{3}\}$ forms a super-attracting cycle of period $4$. Moreover every critical point of $f$ lies in the forward invariant open set $\varphi(E)$, namely a disjoint union of four open subsets of $\CC$ each containing one point of $\{z_{0},z_{1},z_{2},z_{3}\}$. Consequently, every critical orbit accumulates this super-attracting cycle.
\end{proof}


\section{Properties}\label{SecProperties}

The aim of this section is to achieve the proofs of Theorem \ref{ThmMain1} and Theorem \ref{ThmMain2}. More precisely we are going to show that the rational map $f$ constructed in the previous section satisfies all the requirements of these two theorems. Section \ref{SubSecExchangingDynamics} focuses on the dynamical properties of $f$ (stated in Theorem \ref{ThmMain1}), and Section \ref{SubSecTopology} deals with the topological properties of the Julia component of $f$ (stated in Theorem \ref{ThmMain2}).

In order to lighten notations, we forget the quasiconformal map $\varphi$ provided by Lemma \ref{LemQuasiconformalSurgery} to denote the image under $\varphi$ of any set introduced in the previous section (equivalently speaking, we act as if the quasiregular map $F$ constructed in the previous section is actually holomorphic).

\subsection{Exchanging dynamics}\label{SubSecExchangingDynamics}

Consider the following pairwise disjoint open annuli (see Figure \ref{FigEncoding}).
$$A_{0}=A(\alpha_{0},\beta_{0}),\ A_{1}=A(\alpha_{1},\beta_{1}),\ A_{2}=A(\alpha_{2},\beta_{2}), \ \text{and}\ A_{3}=A(\beta_{3}^{+},\beta_{3}^{-}).$$
Then, consider the connected components of the preimage under $f$ of $A_{0}\cup A_{1}\cup A_{2}\cup A_{3}$ which are contained as essential subannulus in one of these open annuli, namely:
\begin{itemize}
	\item $A_{0,1}=A(\alpha_{0},\beta_{0,1})$;
	\item $A_{1,2}=A(\alpha_{1},\beta_{1,2})$;
	\item $A_{2,0}=A(\alpha_{2},\beta_{2,0})$ where $\beta_{2,0}$ is the preimage of $\beta_{0}$ in $B(\widehat{z_{2}})$ (see Figure \ref{FigEquipotentials});
	\item $A_{2,3}=A(\beta_{2,3}^{+},\beta_{2,3}^{-})$ where $\beta_{2,3}^{+}$ is the preimage of $\beta_{3}^{+}$ in $B(\widehat{z_{2}})$ (see Figure \ref{FigEquipotentials});
	\item $A_{3,0}=A(\alpha_{3,0},\beta_{3,0})$ where $\alpha_{3,0}$ is the preimage of $\alpha_{0}$ in $A(\beta_{3,1},\beta_{3,0})$ (see Lemma \ref{LemPreimage});
	\item $A_{3,1}=A(\beta_{3,1},\alpha_{3,1})$ where $\alpha_{3,1}$ is the preimage of $\alpha_{1}$ in $A(\beta_{3,1},\beta_{3,0})$ (see Lemma \ref{LemPreimage}).
\end{itemize}
Notice that the notation is chosen so that each $A_{i,j}$ is contained as essential subannulus in $A_{i}$, and $f|_{A_{i,j}}:A_{i,j}\rightarrow A_{j}$ is a degree $d_{i}$ covering. Remark that some connected components of $f^{-1}(A_{3})$ are included in $A_{3}$ as well (from Lemma \ref{LemLastStep1}, see Figure \ref{FigLastStep}), but none of them is contained in $A_{3}$ as essential subannulus.

\begin{figure}[!htb]
	\begin{center}
	\includegraphics[width=\textwidth]{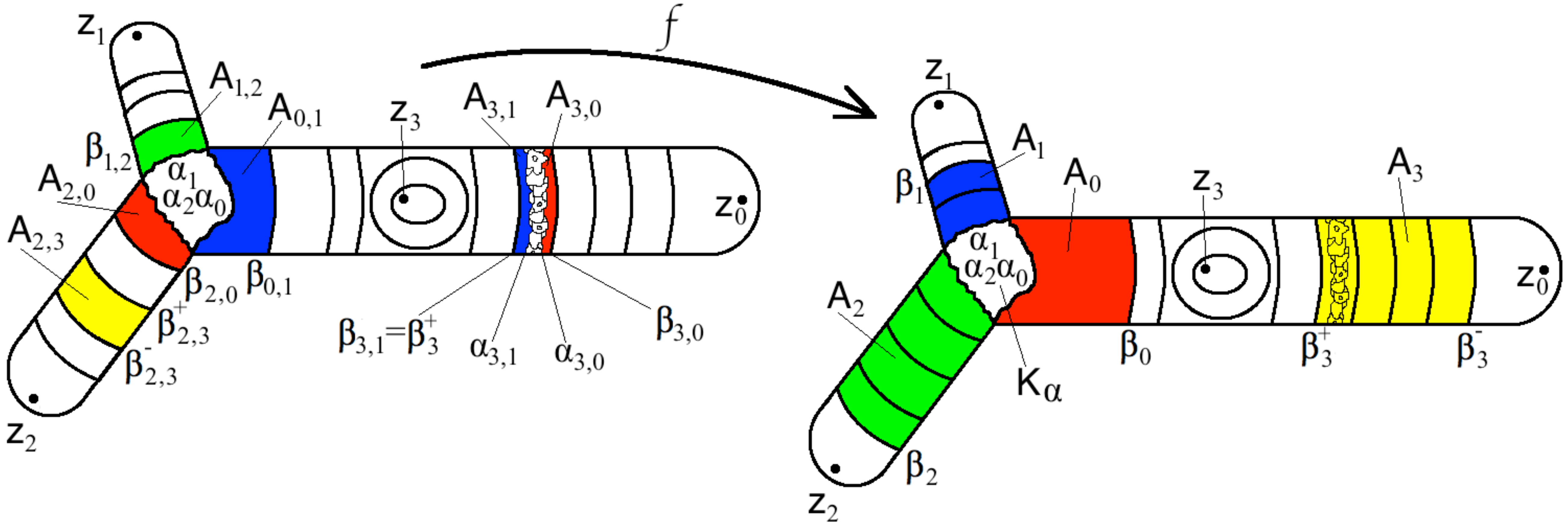}
	\caption{The various annuli considered to encode the exchanging dynamics.}
	\label{FigEncoding}
	\end{center}
\end{figure}

Denote by $\mathcal{A}$ the collection of all connected components of the non-escaping set induced by $f|_{U}:U\rightarrow A_{0}\cup A_{1}\cup A_{2}\cup A_{3}$ on the union of subannuli $U=A_{0,1}\cup A_{1,2}\cup A_{2,0}\cup A_{2,3}\cup A_{3,0}\cup A_{3,1}$.
$$\mathcal{A}=\Big\{J\ \text{connected component of}\ \{z\in U\,/\,\forall n\geqslant 0,\,f^{n}(z)\in U\}\Big\}$$

Let $J_{\alpha}$ be the continuum in $\CC$ which corresponds to the Julia set $J(\widehat{f})$ of $\widehat{f}$ (more precisely, $J_{\alpha}$ is the image of $J(\widehat{f})$ under the quasiconformal map $\varphi$ provided by Lemma \ref{LemQuasiconformalSurgery}). Remark that $J_{\alpha}$ is fixed under iteration of $f$ and $J_{\alpha}$ intersects $\overline{U}$ (along $\alpha_{0}\cup\alpha_{1}\cup\alpha_{2}$). Denote by $\mathcal{A}_{\alpha}$ the collection of all continua which are eventually mapped onto $J_{\alpha}$ and whose every iterate intersects $\overline{U}$.
$$\mathcal{A}_{\alpha}=\Big\{J\ \text{connected component of}\ \bigcup_{n\geqslant 0}f^{-n}(J_{\alpha})\ \text{such that}\ \forall n\geqslant 0,\,f^{n}(J)\cap\overline{U}\neq\emptyset\Big\}$$

Finally, denote by $\mathcal{A}^{\star}$ the union $\mathcal{A}\cup\mathcal{A}_{\alpha}$. As collection of pairwise disjoint continua, $\mathcal{A}^{\star}$ is endowed with the topology coming from the usual distance between continua on the Riemann sphere $\CC$ (equipped with the spherical metric). It turns out that $f$ induced a topological dynamical system on $\mathcal{A}^{\star}$. This dynamical system may be encoded by the weighted dynamical tree $(\HH_{P},w)$ (see Section \ref{SubSecPersianCarpets}) as it is shown in the following lemma.

\begin{lem}\label{LemEncoding}
There exists a homeomorphism $h:\mathcal{A}^{\star}\rightarrow \JJ(\HH_{P})$ such that the following diagram commutes.
$$\xymatrix{
	\mathcal{A}^{\star} \ar[rr]^{\textstyle f} \ar[d]_{\textstyle h} && \mathcal{A}^{\star} \ar[d]^{\textstyle h} \\
	\JJ(\HH_{P}) \ar[rr]_{\textstyle P} && \JJ(\HH_{P})
}$$
Moreover for every $J\in\mathcal{A}$, the restriction map $f|_{J}$ has degree $w(e_{k})=d_{k}$ where $e_{k}$ is the edge of $\HH_{P}$ which contains $h(J)$.
\end{lem}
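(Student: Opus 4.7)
My approach is to exhibit both dynamical systems $(\mathcal{A}^{\star},f)$ and $(\JJ(\HH_P),P)$ as realizations of the same symbolic Markov system and read off $h$ from the conjugation. Let $\Gamma$ be the directed graph on vertex set $\{0,1,2,3\}$ with directed edges $0\to 1$, $1\to 2$, $2\to 0$, $2\to 3$, $3\to 0$, $3\to 1$. By design, $\Gamma$ simultaneously records which subannuli $A_{i,j}$ exist (Section \ref{SecConstruction} constructs exactly one essential subannulus $A_{i,j}\subset A_i$ per edge $i\to j$, on which $f$ is a degree $d_i$ covering onto $A_j$) and which inclusions $e_j\subset P(e_i)$ hold among the four edges of $\HH_P$. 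Let $\Sigma_\Gamma$ be the associated one-sided subshift of finite type.

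On the rational-map side, define $\iota:\mathcal{A}\to\Sigma_\Gamma$ by $\iota(J)=(s_n)_{n\geq 0}$ where $f^n(J)\subset A_{s_n,s_{n+1}}$. This is well-defined because each iterate $f^n(J)$ sits in a unique subannulus of $U$, and the intertwining $\iota\circ f=\sigma\circ\iota$ is immediate. Injectivity and near-surjectivity of $\iota$ follow from the modulus inequalities of Lemma \ref{LemEquip}: for any admissible finite word $(s_0,\ldots,s_n)$, the nested set $A_{s_0,s_1}\cap f^{-1}(A_{s_1,s_2})\cap\cdots\cap f^{-(n-1)}(A_{s_{n-1},s_n})$ is a single essential subannulus of $A_{s_0,s_1}$ (an essential continuum in an annular covering has a unique essential preimage), with modulus decreasing by a definite factor at each step. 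The intersection over $n$ is therefore either a single essential subcontinuum in $A_{s_0,s_1}$, giving the corresponding element of $\mathcal{A}$, or else degenerates to a boundary piece contained in $J_\alpha$, which happens exactly for the three periodic sequences $\overline{012}$, $\overline{120}$, $\overline{201}$ and their preperiodic extensions---precisely the sequences that should code the grand orbit of the branching point $\alpha$.

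The same symbolic setup works for the Hubbard tree: for $x\in\JJ(\HH_P)$ whose forward orbit avoids the multi-edge point $\alpha$, the itinerary $\iota_P(x)=(s_n)$ with $P^n(x)\in e_{s_n}$ is well-defined, and by hyperbolicity of $P$ (its critical orbit is superattracting and so lies in the Fatou set) together with a standard Markov-partition argument, $\iota_P$ is a bijection between $\JJ(\HH_P)\setminus\bigcup_n P^{-n}(\alpha)$ and the same cofinite subset of $\Sigma_\Gamma$. I would then set $h:=\iota_P^{-1}\circ\iota$ on this generic part of $\mathcal{A}$. For the remaining countable pieces, the surgery construction forces $f(J_\alpha)=J_\alpha$, so we must set $h(J_\alpha)=\alpha$; and the preimage structure of $J_\alpha$ under $f$ (described via Lemma \ref{LemPreimage} and the transitions in $\Gamma$) mirrors the preimage structure of $\alpha$ under $P$ within $\HH_P$, giving a canonical bijective extension $h:\mathcal{A}^{\star}\to\JJ(\HH_P)$. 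Continuity of $h$ and $h^{-1}$ follows from the standard fact that cylinder sets shrink as prefix length grows.

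Finally, for the degree statement: if $J\in\mathcal{A}$ has $h(J)\in e_k$, then $s_0=k$ in the itinerary of $J$, so $J\subset A_{k,s_1}$ for a unique transition $k\to s_1$ in $\Gamma$; bijectivity of the coding makes $J$ the full preimage of $f(J)$ inside $A_{k,s_1}$, and since $f|_{A_{k,s_1}}:A_{k,s_1}\to A_{s_1}$ is a degree-$d_k$ covering, the restriction $f|_J$ has degree $d_k=w(e_k)$. The main obstacle I foresee is the careful handling of the branching point: the coding near $\alpha$ is genuinely ambiguous (three admissible itineraries per iterate, because $\alpha$ lies on three edges of $\HH_P$), and this ambiguity must be absorbed into the matching between the countable families $\mathcal{A}_\alpha$ and $\bigl(\bigcup_n P^{-n}(\alpha)\bigr)\cap\HH_P$. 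Checking continuity of $h$ at elements of $\mathcal{A}_\alpha$ requires controlling how generic cylinders accumulate at $J_\alpha$ and its preimages, which rests ultimately on the quasiconformal-surgery structure of the previous section.
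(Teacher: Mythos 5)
Your proposal takes essentially the same route as the paper's proof: both sides are coded by itinerary maps into the same subshift of finite type on $\{0,1,2,3\}$, the $3$-fold ambiguity at the grand orbit of the branching point $\alpha$ is resolved by identifying itineraries that eventually agree with $\overline{012},\overline{120},\overline{201}$, and continuity is deduced from the shrinking moduli of the nested subannuli $A_{s_0,\dots,s_n}$. The paper packages the identification by passing explicitly to the quotient $\Sigma^\star=\Sigma/\sim$ and conjugating both $(\mathcal{A}^\star,f)$ and $(\JJ(\HH_P),P)$ to $(\Sigma^\star,\sigma)$ before composing, whereas you first build the conjugacy on the generic (non-exceptional) part and then extend over the countable set $\mathcal{A}_\alpha$; this is a presentational difference, not a substantive one.
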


\begin{proof}
At first, remark there is a subannulus $A_{i,j}$ for some $i,j\in\{0,1,2,3\}$ if and only if the $(i,j)$-entry of the transition matrix $M=(m_{i,j})_{i,j\in\{0,1,2,3\}}$ is non-zero (see Definition \ref{DefTransitionMatrix}). Indeed, recall that the transition matrix is
$$M=\left(\begin{array}{cccc} 0 & \frac{1}{d_{0}} & 0 & 0 \\ 0 & 0 & \frac{1}{d_{1}} & 0 \\ \frac{1}{d_{2}} & 0 & 0 & \frac{1}{d_{2}} \\ \frac{1}{d_{3}} & \frac{1}{d_{3}} & 0 & 0 \end{array}\right).$$
According to this remark, we introduce the subshift of finite type $(\Sigma,\sigma)$ associated to the transition matrix $M$, namely the restriction of the 4-to-1 shift map on the subset of all infinite sequences of digits in $\{0,1,2,3\}$ such that every adjacent pair of entries lies in $\{(0,1),(1,2),(2,0),(2,3),(3,0),(3,1)\}$.
$$\Sigma=\Big\{s=(s_{0},s_{1},s_{2},\dots)\in\{0,1,2,3\}^{\N}\,/\,\forall k\geqslant 0,\,m_{s_{k},s_{k+1}}\neq 0\Big\}$$
$$\sigma:\Sigma\rightarrow\Sigma,s=(s_{0},s_{1},s_{2},\dots)\mapsto\sigma(s)=(s_{1},s_{2},s_{3},\dots)$$
$\Sigma$ is endowed with the topology coming from the following distance, making it a Cantor set.
$$\forall s,s'\in\Sigma,\ d(s,s')=\sum_{k\geqslant 0}\frac{|s_{k}-s'_{k}|}{4^{k}}$$

Let $S_{\alpha}$ be the subset of $\Sigma$ of three infinite sequences of repeating $0,1,2$ digits.
$$S_{\alpha}=\Big\{(0,1,2,0,1,2,0,1,2,\dots),\ (1,2,0,1,2,0,1,2,0,\dots),\ (2,0,1,2,0,1,2,0,1,\dots)\Big\}$$
We shall identify these three sequences in $\Sigma$, and similarly every subset of sequences which are eventually mapped in $S_{\alpha}$ after the same itinerary under $\sigma$. More precisely, let $\sim$ be the equivalence relation on $\Sigma$ defined by
$$\forall s,s'\in\Sigma,\ s\sim s'\iff\exists n\geqslant 0\,/\,\left\{\begin{array}{l} \forall k\in\{0,1,\dots,n\},\, s_{k}=s'_{k} \\ \sigma^{n}(s),\sigma^{n}(s')\in S_{\alpha} \end{array}\right.$$
and let $\Sigma^{\star}$ be the topological quotient space $\Sigma/\sim$. Remark that $\Sigma^{\star}$ is a Cantor set as well for the quotient topology induced by $\sim$. Abusing notations, every equivalence class containing only one infinite sequence $s\in\Sigma$ which is not eventually mapped in $S_{\alpha}$ is still denoted by $s\in\Sigma^{\star}$, and the map induced by the shift map on $\Sigma^{\star}$ is still denoted by $\sigma$.

We are going to show that $(\mathcal{A}^{\star},f)$ is topologically conjugated to $(\Sigma^{\star},\sigma)$. To do so, consider the itinerary map $h_{1}:\mathcal{A}\rightarrow\Sigma^{\star}$ defined by
$$\forall J\in\mathcal{A},\,h_{1}(J)=(s_{0},s_{1},s_{2},\dots)\quad\text{with}\ f^{k}(J)\subset A_{s_{k}}\ \text{for every}\ k\geqslant 0.$$
This map is well defined and injective by definition of $\mathcal{A}$.

To prove that $h_{1}$ extends to a homeomorphism from $\mathcal{A}^{\star}$ to $\Sigma^{\star}$, we first define by induction for every $s=(s_{0},s_{1},s_{2},\dots)\in\Sigma$ an infinite sequence of subannuli $(A_{s_{0},s_{1},\dots,s_{n}})_{n\geqslant 0}$ such that for every $n\geqslant 0$, $A_{s_{0},s_{1},\dots,s_{n}}$ is contained in $A_{s_{0}}$ as essential subannulus, and $f|_{A_{s_{0},s_{1},\dots,s_{n}}}:A_{s_{0},s_{1},\dots,s_{n}}\rightarrow A_{s_{1},s_{2},\dots,s_{n}}$ is a degree $d_{s_{0}}$ covering. Denote by $A_{s}=A_{s_{0},s_{1},s_{2},\dots}$ the limit set $\bigcap_{n\geqslant 0}\overline{A_{s_{0},s_{1},\dots,s_{n}}}$ which is a continuum.

If $s$ is not eventually mapped in $S_{\alpha}$, then $\overline{A_{s_{0},s_{1},\dots,s_{n}}}$ is contained in $U=A_{0,1}\cup A_{1,2}\cup A_{2,0}\cup A_{2,3}\cup A_{3,0}\cup A_{3,1}$ for every $n\geqslant 0$ large enough and thus $A_{s}$ is a connected component of the non-escaping set, that is an element of $\mathcal{A}$. Moreover, $h_{1}(A_{s})=s$ holds from definition of the itinerary map $h_{1}$.

On the contrary, if $s$ is in $S_{\alpha}$, then $A_{s}$ is either $\alpha_{0}$, $\alpha_{1}$ or $\alpha_{2}$, and in particular $A_{s}$ is contained in $J_{\alpha}$. More generally, if $s$ is eventually mapped in $S_{\alpha}$, then $A_{s}$ is contained in a continuum $J$ which is eventually mapped onto $J_{\alpha}$, that is an element of $\mathcal{A}_{\alpha}$. Moreover, for every $s'\in\Sigma$ such that $s'\sim s$, $A_{s'}$ is contained in the same continuum $J\in\mathcal{A}_{\alpha}$.

Therefore $h_{1}$ extends to a bijective map from $\mathcal{A}^{\star}$ to $\Sigma^{\star}$, by associating to $J\in\mathcal{A}_{\alpha}$ the equivalence class $h_{1}(J)\in\Sigma^{*}$ of the itinerary $s=(s_{0}s_{1},s_{2},\dots)\in\Sigma$ of any subcontinuum in $J$ which is eventually mapped into $\alpha_{0}\cup\alpha_{1}\cup\alpha_{2}$. Furthermore, this extension is actually a conjugation between $f$ and $\sigma$.
$$\forall J\in\mathcal{A}^{\star},\,h_{1}(f(J))=\sigma(h_{1}(J))$$

It remains to prove the continuity. Fix $J\in\mathcal{A}^{\star}$ and let $s=(s_{0},s_{1},s_{2},\dots)\in\Sigma$ be a class representative of $h_{1}(J)$. Let $J'$ be another element of $\mathcal{A}^{\star}$ such that some class representative $s'=(s'_{0},s'_{1},s'_{2},\dots)\in\Sigma$ of $h_{1}(J')$ is arbitrary close to $s$. That implies the first $n$ digits of $s$ and $s'$ coincide for arbitrary large $n\geqslant 0$. In particular, $A_{s}$ and $A_{s'}$ are contained in $\overline{A_{s_{0},s_{1},\dots,s_{n}}}$. Remark that $f^{n}|_{A_{s_{0},s_{1},\dots,s_{n}}}:A_{s_{0},s_{1},\dots,s_{n}}\rightarrow A_{s_{n}}$ is a covering of degree $d_{s_{0}}d_{s_{1}}\dots d_{s_{n-1}}$ tending to infinity with $n$ (since assumption (\ref{H2}) implies that at least two of weights $d_{0}$, $d_{1}$, $d_{2}$, and $d_{3}$ are $\geqslant 2$, see Definition \ref{DefTransitionMatrix}). Therefore $A_{s}$ and $A_{s'}$ are contained in an open annulus of arbitrary small modulus. Then, using extremal length (see \cite{ConformalInvariants}), it follows that $A_{s}\subset J$ and $A_{s'}\subset J'$ are arbitrary close, hence $J$ and $J'$ are arbitrary close in $\mathcal{A}^{\star}$. Consequently $h_{1}^{-1}$ is continuous. The continuity of $h_{1}$ follows from a similar argument.

Similarly, we can show that $(\JJ(\HH_{P}),P)$ is topologically conjugated to $(\Sigma^{\star},\sigma)$ by a homeomorphism $h_{2}:\JJ(\HH_{P})\rightarrow\Sigma^{\star}$. Indeed recall that the dynamical tree $\HH_{P}$ is described by a set of four edges $e_{0},e_{1},e_{2},e_{3}$ where $P$ acts as follows (see Section \ref{SubSecPersianCarpets}).
$$\left\{\begin{array}{l}
	P(e_{0})=e_{1} \\
	P(e_{1})=e_{2} \\
	P(e_{2})=e_{0}\cup e_{3} \\
	P(e_{3})=e_{0}\cup e_{1}
\end{array}\right.$$
Thus, we may find four connected open subsets $I_{0}$, $I_{1}$, $I_{2}$, and $I_{3}$ respectively included in $e_{0}$, $e_{1}$, $e_{2}$, and $e_{3}$ together with six connected open subsets $I_{0,1}$, $I_{1,2}$, $I_{2,0}$, $I_{2,3}$, $I_{3,0}$, and $I_{3,1}$ such that:
\begin{itemize}
	\item each $I_{i,j}$ is contained in $I_{i}$ and $P|_{I_{i,j}}:I_{i,j}\rightarrow I_{j}$ is a homeomorphism;
	\item and $\JJ(\HH_{P})=\Big\{z\in V\,/\,\forall n\geqslant 0,\,P^{n}(z)\in V\Big\}\cup\Big\{z\ \text{point in}\ \bigcup_{n\geqslant 0}P^{-n}(\alpha)\cap\overline{V}\Big\}$ where $V=I_{0,1}\cup I_{1,2}\cup I_{2,0}\cup I_{2,3}\cup I_{3,0}\cup I_{3,1}$.
\end{itemize}
Consequently, we can show as above that the itinerary map $h_{2}:\{z\in V\,/\,\forall n\geqslant 0,\,P^{n}(z)\in V\}\rightarrow\Sigma^{\star}$ extends to a homeomorphism from $\JJ(\HH_{P})$ to $\Sigma^{\star}$ which conjugates the dynamics of $P$ and $\sigma$.

Finally, taking $h=h_{2}^{-1}\circ h_{1}$ concludes the proof. 
\end{proof}

Remark that the proof of Theorem \ref{ThmMain1} is almost completed. Indeed point \textbf{(i)} comes from Lemma \ref{LemDegree} while points \textbf{(ii)} and \textbf{(iii)} follows from Lemma \ref{LemEncoding} (since $\mathcal{A}$ is, by definition, the set of continua $J$ in $\mathcal{A^{\star}}$ such that $J$ is not eventually mapped under iteration to the fixed continuum $J_{\alpha}$, or equivalently, such that $h(J)$ is not eventually mapped under iteration to the fixed branching point $\alpha$). It only remains to prove that $\mathcal{A^{\star}}$ is actually the set $\JJcrit(f)$ of all postcritically separating Julia components of $f$.

\begin{lem}\label{LemCriticallySeparating}
The following equality of sets holds.
$$\mathcal{A^{\star}}=\JJcrit(f)$$
\end{lem}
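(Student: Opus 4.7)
The plan is to establish both inclusions separately, using the itinerary encoding from Lemma~\ref{LemEncoding} as the bridge.

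For $\mathcal{A}^{\star} \subseteq \JJcrit(f)$, first take $J \in \mathcal{A}$. By definition, the entire forward orbit of $J$ is trapped in $\overline{U}$, which is disjoint from the super-attracting cycle $\{z_0,z_1,z_2,z_3\}$ (in fact from the forward invariant neighborhood $\varphi(E)$ where all critical orbits accumulate by Lemma~\ref{LemDegree}). Since $f$ is hyperbolic, any set whose orbit remains in a compact subset of $\CC$ disjoint from the postcritical closure must lie in $J(f)$; hence $J \subset J(f)$. Moreover $J$ is an essential continuum of some annulus $A_{s_0}$, which separates $z_{s_0}$ from the remaining points of the super-attracting cycle; since those points are postcritical, $J$ is postcritically separating. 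To see that $J$ is a full Julia component and not merely contained in one, observe that any strictly larger connected subset of $J(f)$ would have to meet two distinct essential subannuli $A_{s_0,j}$ and $A_{s_0,j'}$ at some iterate, which is impossible by connectedness of the image. The case $J \in \mathcal{A}_{\alpha}$ is handled by pulling back: $J_{\alpha}$ is the quasiconformal image of $J(\widehat{f})$, a connected set in $J(f)$ that separates $z_0, z_1, z_2$ from each other by the structure of the super-attracting $3$-cycle of $\widehat{f}$, so $J_{\alpha} \in \JJcrit(f)$, and iterated preimages inherit both properties by complete invariance of $J(f)$ and forward invariance of the postcritical set.

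For the reverse inclusion $\JJcrit(f) \subseteq \mathcal{A}^{\star}$, take any $J \in \JJcrit(f)$. Because $f$ is hyperbolic, $J$ avoids every super-attracting basin, and because the postcritical set accumulates only on $\{z_0,z_1,z_2,z_3\}$, the separating condition forces $J$ to separate neighborhoods of at least two of these cycle points. By the construction in Section~\ref{SecConstruction}, the four annuli $A_0,A_1,A_2,A_3$ are arranged so that $A_i$ is the unique annular region essentially separating $z_i$ from the remaining cycle points; hence $J$ is an essential subcontinuum of exactly one $A_{s_0}$. Applying the same analysis to every forward iterate $f^n(J)$, which is again in $\JJcrit(f)$, produces an admissible itinerary $(s_0,s_1,s_2,\ldots) \in \Sigma$: the transitions are forced because $f(A_{s_k})$ covers $A_{s_{k+1}}$ only through the essential subannulus $A_{s_k,s_{k+1}}$, with the nonzero pattern of entries exactly that of the transition matrix $M$. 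If this itinerary is not eventually in $S_\alpha$, the nested intersection $\bigcap_n \overline{A_{s_0,\dots,s_n}}$ produced in the proof of Lemma~\ref{LemEncoding} is the unique element of $\mathcal{A}$ with that itinerary, and $J$ must coincide with it by maximal connectedness. Otherwise the itinerary terminates at one of $\alpha_0,\alpha_1,\alpha_2$ under iteration, which places $J$ in $\mathcal{A}_{\alpha}$ via the same identification.

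The main obstacle is the second inclusion: verifying that an abstractly given postcritically separating Julia component really admits a well-defined itinerary of essential containments in the $A_{s_k}$'s, and that this itinerary identifies $J$ uniquely. The first point rests on the fact that, near the accumulation set $\{z_0,z_1,z_2,z_3\}$ of the postcritical set, the only way for a Julia continuum to separate postcritical points is to wind essentially around one super-attracting point within the associated annulus $A_i$. The second point follows from the shrinking argument already used in Lemma~\ref{LemEncoding}: by (\ref{H2}) at least two of the degrees $d_k$ exceed $1$, so the modulus of $A_{s_0,\dots,s_n}$ tends to zero, and extremal length forces the intersection of the closures to be a single continuum, which must be $J$.
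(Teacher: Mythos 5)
The forward inclusion $\mathcal{A}^{\star}\subseteq\JJcrit(f)$ is essentially fine and matches the paper's approach (the nested essential subannuli separate the super-attracting cycle), though your remark that $J_{\alpha}$ ``separates $z_0,z_1,z_2$ from each other'' is off in the notation: $z_0=\varphi(c)$ and $z_3=\varphi(\widehat{z_0})$ both lie in $\varphi(B(\widehat{z_0}))$, so $J_{\alpha}$ separates $\{z_0,z_3\}$, $z_1$, $z_2$ pairwise, which is what makes it postcritically separating.

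The gap is in the reverse inclusion, and it sits exactly where the paper's proof works hardest. You assert that ``$J$ is an essential subcontinuum of exactly one $A_{s_0}$'' and then build an admissible itinerary, but this skips two genuinely nontrivial exclusions. First, a postcritically separating Julia component could a priori lie in $K_{\alpha}$, and you do not rule out that it sits in $K_{\alpha}-J_{\alpha}$; the paper does this by observing that every connected component of $\CC-J_{\alpha}$ is simply connected (Lemma~\ref{LemBranchingMap}) and that $\partial K_{\alpha}\subset J_{\alpha}$, so no connected compact subset of $K_{\alpha}-J_{\alpha}$ can separate the postcritical set. This is what forces the dichotomy ``either $J=J_{\alpha}$ or $f^n(J)\subset\overline{A_0}\cup\overline{A_1}\cup\overline{A_2}\cup\overline{A_3}$ for all $n$''. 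Second, and more seriously, you assume the transitions are forced to follow the nonzero entries of $M$, but there exist preimage components $\overline{A'_{3,3}}$ of $\overline{A_3}$ lying inside $A_3$ (from Lemma~\ref{LemLastStep1}) that are \emph{not} essential; a Julia component could conceivably be trapped in or eventually enter one of these. The paper excludes this explicitly by noting that each $A'_{3,3}$ fails to separate the postcritical set (it is not essential in $A_3$), so neither $J$ nor any iterate $f^n(J)$ of a postcritically separating component can be contained in $\overline{A'_{3,3}}$. Without these two steps your ``forced transitions'' are an assumption rather than a conclusion, and the well-definedness of the itinerary you invoke is precisely what needs to be proved. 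There is also a local inconsistency: your sentence derives ``hence $J$ is an essential subcontinuum of exactly one $A_{s_0}$'' for \emph{all} $J\in\JJcrit(f)$, but this is false for $J_{\alpha}$ and its preimages (which you treat later as a separate case), so the logical structure needs to branch earlier, as the paper does.
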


\begin{proof}
Recall that the postcritical set is contained in the forward invariant set $E=D(\beta_{1,2})\cup D(\beta_{2,3}^{-})\cup D(\delta_{3,c}^{-})\cup A(\gamma_{0,1},\gamma_{3,1})$ (see Lemma \ref{LemQcExtension} and Figure \ref{FigFinalDynamics}) and each point of the super-attracting cycle $\{z_{0},z_{1},z_{2},z_{3}\}$ lies in a different connected component of $E$. In particular $J(f)$ is the set of all points whose orbit remains in $\CC-E=\overline{A_{0}}\cup\overline{A_{1}}\cup\overline{A_{2}}\cup\overline{A_{3}}\cup K_{\alpha}$ where $K_{\alpha}$ is the complement in $\CC$ of $B(\widehat{z_{0}})\cup B(\widehat{z_{1}})\cup B(\widehat{z_{2}})$ (see Figure \ref{FigEncoding}).

It follows that every element $J$ in $\mathcal{A}$ is a Julia component. Moreover $J$ is postcritically separating as limit set of nested essential subannuli which separate each the super-attracting cycle $\{z_{0},z_{1},z_{2},z_{3}\}$ (see proof of Lemma \ref{LemEncoding}). Therefore $\mathcal{A}\subset\JJcrit(f)$.

Similarly, every element $J$ in $\mathcal{A}_{\alpha}$ is a Julia component. Moreover recall that $J$ intersects $\overline{U}$ along a limit set of nested essential subannuli which separate each the super-attracting cycle $\{z_{0},z_{1},z_{2},z_{3}\}$ (see proof of Lemma \ref{LemEncoding}). Therefore $\mathcal{A}_{\alpha}\subset\JJcrit(f)$ and $\mathcal{A}^{\star}=\mathcal{A}\cup\mathcal{A}_{\alpha}\subset\JJcrit(f)$.

Conversely, let $J$ be a postcritically separating Julia component of $f$. Remark that $J$ is not contained in $K_{\alpha}-J_{\alpha}$. Indeed, recall that every connected component of $\CC-J_{\alpha}$ is simply connected (see Lemma \ref{LemBranchingMap}) and that $\partial K_{\alpha}=\alpha_{0}\cup\alpha_{1}\cup\alpha_{2}\subset J_{\alpha}$, therefore every connected compact subset of any connected component of $K_{\alpha}-J_{\alpha}$ does not separate the postcritical points. Consequently either $J$ is $J_{\alpha}\in\mathcal{A}_{\alpha}\subset\mathcal{A}^{\star}$ or $f^{n}(J)$ stays in $\overline{A_{0}}\cup\overline{A_{1}}\cup\overline{A_{2}}\cup\overline{A_{3}}$ for every $n\geqslant 0$. Assume that $J$ is not $J_{\alpha}$.

Recall that every connected component of the preimage under $f$ of $\overline{A_{0}}\cup\overline{A_{1}}\cup\overline{A_{2}}\cup\overline{A_{3}}$ which is contained in this compact union, is contained either in $\overline{U}$ or in some connected components of $\overline{f^{-1}(A_{3})}$ included in $A_{3}$ (from Lemma \ref{LemLastStep1}, see Figure \ref{FigLastStep}), says $\overline{A'_{3,3}}$. However every $A'_{3,3}$ is not contained in $A_{3}$ as essential subannulus, and hence does not separate the postcritical points. In particular $J$ is not contained in any $\overline{A'_{3,3}}$. Furthermore, $J$ can not eventually fall in some $\overline{A'_{3,3}}$ after some iterations of $f$, otherwise $f^{n}(J)$ would not be postcritically separating for some $n\geqslant 0$ contradicting the fact that $J$ is postcritically separating. It follows that $f^{n}(J)$ stays in $\overline{U}$ for every $n\geqslant 0$ and hence $J\in\mathcal{A}^{\star}$ that concludes the proof.
\end{proof}

\subsection{Topology of buried Julia components}\label{SubSecTopology}

Existence of each of the three types of buried Julia components which occurs in $J(f)$ is shown in this section, that proofs Theorem \ref{ThmMain2}.

\begin{lem}[Point type buried Julia components]
There exist uncountably many buried Julia components in $J(f)$ which are points.
\end{lem}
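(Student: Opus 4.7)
The plan is to produce uncountably many point-type buried Julia components as single-point intersections of nested buried disks around iterated preimages of the complex-type component $J_{\alpha}$. Note that a singleton cannot separate $\CC$, so no point-type component lies in $\mathcal{A}^{\star}=\JJcrit(f)$, and the encoding $\Sigma^{\star}$ from Lemma~\ref{LemEncoding} does not parametrize them; the construction must therefore take place outside that encoding.

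First, I would locate the ``dust'' of complex-type preimages. The subannulus $A(\beta_{3,1},\beta_{3,0})\subset A_{3}$ covers $A(\beta_{1},\beta_{0})$, which contains the arcs $\alpha_{0}\cup\alpha_{1}\subset J_{\alpha}$, with degree $d_{3}$ by Lemma~\ref{LemPreimage}. It therefore contains at least one complex-type component $L^{(1)}\subset f^{-1}(J_{\alpha})\setminus J_{\alpha}$ (quasiconformally homeomorphic to $J_{\alpha}$ by Theorem~\ref{ThmMain2}). Because $\deg(f)=\widehat{d}+d_{3}\geqslant 3$, the preimages of $J_{\alpha}$ organize themselves into an infinite tree with branching number at least two at every sufficiently deep level (using that $\widehat{d}\geqslant 2$ by assumption (\ref{H1})). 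I would attach to each level-$n$ complex-type component $L^{(n)}$ a small topological disk $D^{(n)}\supset L^{(n)}$ whose closure is compactly contained in $\CC$ minus the super-attracting basin of $\{z_{0},z_{1},z_{2},z_{3}\}$; such a disk exists because $L^{(n)}$ is buried, and I would arrange inductively that each $D^{(n+1)}$ be the image of $D^{(n)}$ under a univalent inverse branch of $f$.

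Next, I would invoke the hyperbolicity of $f$ (item~\textbf{(i)} of Theorem~\ref{ThmMain1}): since every critical orbit accumulates the super-attracting cycle, which is disjoint from $J(f)$, the map $f$ is uniformly expanding on a neighborhood of $J(f)$ in the spherical metric. Applying the Koebe distortion theorem to the univalent inverse branches used to define the $D^{(n)}$, I would deduce that $\mathrm{diam}(D^{(n)})$ decays exponentially with $n$. For every infinite branching chain $D^{(1)}\supset D^{(2)}\supset\dots$, the intersection $\bigcap_{n}\overline{D^{(n)}}$ therefore reduces to a single point $p\in J(f)$; distinct chains yield distinct points because their disks become disjoint at the first disagreement level, producing uncountably many such points parametrized by $\{0,1\}^{\N}$.

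Finally, each such $p$ is a singleton Julia component: any continuum $K\subset J(f)$ containing $p$ must lie in $D^{(n)}$ for every large $n$ by connectedness, hence $K\subset\bigcap_{n}\overline{D^{(n)}}=\{p\}$. Each $p$ is buried because each $D^{(n)}_{j_{n}}$ was chosen buried and $p$ lies in their intersection. The main obstacle in executing this plan is verifying that the preimage tree of $J_{\alpha}$ genuinely has at least two branches at every level (which requires a careful count using $\deg(f)\geqslant 3$ together with the location of the extra critical points supplied by Lemma~\ref{LemFolding}) and ensuring univalence of the relevant inverse branches throughout the construction, so that Koebe distortion indeed gives uniform shrinkage rather than mere qualitative convergence.
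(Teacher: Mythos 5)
Your proposal is in the right spirit (build a Cantor set of singletons as nested intersections of shrinking disks permuted by inverse branches), but the execution differs substantially from the paper's proof and contains a genuine gap precisely at the step that does the real work.

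The central issue is the nesting. You set $D^{(n+1)}=g_{n+1}(D^{(n)})$ for a univalent inverse branch $g_{n+1}$ of $f$, and then later write ``For every infinite branching chain $D^{(1)}\supset D^{(2)}\supset\dots$, the intersection $\bigcap_n\overline{D^{(n)}}$ reduces to a single point.'' But $g_{n+1}(D^{(n)})\subset D^{(n)}$ is not automatic; it requires $D^{(n)}\subset f(D^{(n)})$ (or $D^{(1)}\subset f^{k}(D^{(1)})$ if you iterate), which is exactly what has to be arranged and is never verified. Without nesting, the intersection can be empty and the Koebe/hyperbolicity contraction estimate has nothing to bite on, since those arguments require the iterated inverse branches to act on a fixed disk. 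A second, related gap: for the conclusion that ``any continuum $K\subset J(f)$ containing $p$ must lie in $D^{(n)}$ by connectedness'' you need $\partial D^{(n)}\cap J(f)=\emptyset$ (i.e.\ the boundary curves live in the Fatou set), not merely that $\overline{D^{(n)}}$ misses the super-attracting cycle; as written nothing prevents a Julia continuum from crossing $\partial D^{(n)}$. (Incidentally, ``compactly contained in $\CC$ minus the super-attracting basin'' cannot literally hold, since that complement is $J(f)$ itself, which has empty interior; presumably you mean away from the cycle, but this still does not give the boundary condition you need.)

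The paper resolves both points at once and much more explicitly. It exploits a non-essential connected component $A'_{3,3}$ of $f^{-1}(A_3)$ contained in $A_3$ (produced by Lemma~\ref{LemLastStep1}): the disk $D(\beta_{3,3}^+)$ it generates is bounded by a curve of equipotential type (hence $\partial D\cap J(f)=\emptyset$), it sits inside the disk $D=D(\beta_3^+)$, and $f|_{D(\beta_{3,3}^+)}:D(\beta_{3,3}^+)\to D$ is a homeomorphism. Then, because $f^{4}|_{A_{3,0,1,2,3}}:A_{3,0,1,2,3}\to A_3$ is a covering of degree $d_3d_0d_1d_2\geqslant 2$, there are two disjoint preimages $D_0,D_1$ of $D(\beta_{3,3}^+)$ inside $A_3\subset D$, on each of which $f^{5}$ is a homeomorphism onto $D$. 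This is an honest two-generator IFS on a single disk $D$ whose boundary is in the Fatou set, and the classical non-escaping-set argument gives a Cantor set of points. Your ``branch at every deep level of the preimage tree of $J_\alpha$'' heuristic is replaced there by a single concrete degree count. If you wanted to salvage your version, you would essentially need to reproduce this: choose $D^{(1)}$ bounded in the Fatou set with $D^{(1)}\subset f^{k}(D^{(1)})$ for some $k$ and with at least two disjoint univalent $f^{k}$-preimages inside $D^{(1)}$ — which is the content of the paper's construction rather than a consequence of $\deg(f)\geqslant 3$.
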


\begin{proof}
Let $A'_{3,3}=A(\beta_{3,3}^{+},\beta_{3,3}^{-})$ be a connected component of $f^{-1}(A_{3})$ contained in $A_{3}=A(\beta_{3}^{+},\beta_{3}^{-})$ (from Lemma \ref{LemLastStep1}, see Figure \ref{FigLastStep}) where $\beta_{3,3}^{+}$ and $\beta_{3,3}^{-}$ are preimages of $\beta_{3}^{+}$ and $\beta_{3}^{-}$, respectively. Recall that $A'_{3,3}$ is not contained in $A_{3}$ as essential subannulus. In particular, the connected component of $\CC-\beta_{3,3}^{+}$ containing $A'_{3,3}$ is an open disk $D(\beta_{3,3}^{+})$ contained in $A_{3}$ and such that $f|_{D(\beta_{3,3})}:D(\beta_{3,3}^{+})\rightarrow D$ is a homeomorphism where $D=D(\beta_{3}^{+})$ is the open disk bounded by $\beta_{3}^{+}$ and containing $A_{3}$.

Using notations coming from the proof of Lemma \ref{LemEncoding}, consider the subannulus $A_{3,0,1,2,3}$ contained in $A_{3}$ as essential subannulus and such that $f^{4}|_{A_{3,0,1,2,3}}:A_{3,0,1,2,3}\rightarrow A_{3}$ is a degree $d_{3}d_{0}d_{1}d_{2}$ covering. Since assumption (\ref{H2}) implies that at least two of weights $d_{0}$, $d_{1}$, $d_{2}$, and $d_{3}$ are $\geqslant 2$ (see Definition \ref{DefTransitionMatrix}), it follows that this degree is $\geqslant 2$ and hence, there are at least 2 disjoint preimages under $f^{4}|_{A_{3,0,1,2,3}}$ of $D(\beta_{3,3}^{+})$ in $A_{3,0,1,2,3}\subset A_{3}\subset D$, says $D_{0}$ and $D_{1}$.

Finally we have two disjoint open disks $D_{0}$ and $D_{1}$ in $D$ such that $f^{5}|_{D_{0}}:D_{0}\rightarrow D$ and $f^{5}|_{D_{1}}:D_{1}\rightarrow D$ are homeomorphisms. It is then a classical exercise to prove that the non-escaping set
$$\mathcal{D}=\{z\in D_{0}\cup D_{1}\,/\,\forall n\geqslant 0, (f^{5})^{n}\in D_{0}\cup D_{1}\}$$
is a Cantor set homeomorphic to the space of all sequences of two digits $\Sigma_{2}=\{0,1\}^{\N}$. In particular, $\mathcal{D}$ contains uncountably many points. Furthermore every point in $\mathcal{D}$ is a buried point in $J(f)$ since $A_{3}\subset D$ contains infinitely many postcritically separating Julia components.
\end{proof}

\begin{lem}[Circle type buried Julia components]
There exist uncountably many buried Julia components in $J(f)$ which are wandering Jordan curves.
\end{lem}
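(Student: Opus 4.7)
The plan is to select uncountably many itineraries $s$ in the shift space $(\Sigma,\sigma)$ of the proof of Lemma \ref{LemEncoding} whose associated components $J_s$ are wandering Jordan curves, then show each of them is buried. Let $\Sigma'$ denote the set of $s \in \Sigma$ such that (a) the symbol $3$ appears infinitely often in $s$, and (b) $s$ is not eventually periodic. Condition (a) is equivalent to $s$ not being eventually in $S_\alpha$, because the transition graph of $M$ restricted to $\{0,1,2\}$ is the single cycle $0\to 1\to 2\to 0$, so any tail of $s$ avoiding $3$ must lie in $S_\alpha$. Hence, via the equivariant bijection $h_1$ from the proof of Lemma \ref{LemEncoding}, each $s \in \Sigma'$ corresponds to a unique wandering component $J_s \in \mathcal{A}$, and $\Sigma'$ is uncountable since it differs from $\Sigma$ only by the countable set of sequences that are either eventually periodic or eventually in $S_\alpha$.

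The central technical step is to verify that $J_s$ is a Jordan curve. Let $N$ be the smallest index with $s_N = 3$. For every $n > N$ the essential subannulus $\overline{A_{s_0,\ldots,s_n}}$ is bounded by two quasicircles that both lie strictly in the Fatou set of $f$: they are iterated $f$-preimages of the equipotentials $\beta_3^+$ and $\beta_3^-$ bounding $A_3$, not of any piece of $J_\alpha$. The holomorphic covering $f^n : A_{s_0,\ldots,s_n} \to A_{s_n}$ has degree $d_{s_0}\cdots d_{s_{n-1}}$, which tends to infinity by assumption (\ref{H2}), so the moduli $\modulus(A_{s_0,\ldots,s_n}) = \modulus(A_{s_n})/(d_{s_0}\cdots d_{s_{n-1}})$ tend to zero. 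I would then apply conformal uniformization to these nested holomorphic annuli (along the lines of the McMullen Cantor-of-circles argument recalled in the sketch of Theorem \ref{ThmMcMullen}) to conclude that the two boundary quasicircles of $\overline{A_{s_0,\ldots,s_n}}$ converge uniformly to a common quasicircle; the intersection $J_s = \bigcap_n \overline{A_{s_0,\ldots,s_n}}$ is then a Jordan curve.

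The wandering property follows immediately from condition (b), since $f^n(J_s) = J_{\sigma^n(s)}$ by equivariance of $h_1$. For buriedness, $J_s$ is disjoint from $J_\alpha$ by the argument above and is separated from the immediate super-attracting basin of $\{z_0,z_1,z_2,z_3\}$ (contained in $E$) by the nested annuli $A_{s_0,\ldots,s_n} \subset U$. Every other Fatou component of $f$ is an iterated preimage of a component of $E$ or of a Fatou component of $\widehat{f}$, so its boundary is an iterated preimage of $J_\alpha$ or of an equipotential, which in each case is separated from $J_s$ by a sub-annulus of positive modulus. Varying $s'$ in $\Sigma'$ in a small neighborhood of $s$ produces components $J_{s'}$ arbitrarily close to $J_s$ on both sides, so $J_s$ cannot lie on any Fatou boundary. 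The main obstacle will be the Jordan curve claim: it requires careful control of the geometry of the nested holomorphic pullbacks to rule out pinching to a point, which is guaranteed precisely because after the first $3$ in $s$, both boundary components of the nested annuli lie in the Fatou set.
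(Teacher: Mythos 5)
Your overall strategy is genuinely different from the paper's, and it has a real gap. The paper does not attempt a direct geometric proof that each wandering component is a Jordan curve. Instead it invokes the main theorem of \cite{RationalMapsDisconnectedJuliaSet} as a black box: for a geometrically finite rational map (and $f$ is hyperbolic by Lemma~\ref{LemDegree}), every wandering Julia component is either a point or a Jordan curve. A postcritically separating component obviously cannot be a point, so every wandering element of $\JJcrit(f)$ is automatically a Jordan curve. That single citation disposes of the entire ``Jordan curve'' step that you correctly identify as the main obstacle of your plan. Your sketch --- nested annuli of moduli tending to $0$, both boundary curves eventually in the Fatou set after the first occurrence of the symbol $3$, then ``apply conformal uniformization along the lines of the McMullen Cantor-of-circles argument'' --- does not actually close that gap. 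Moduli tending to $0$ guarantees the intersection is a continuum with empty interior, but it does not by itself force the limit to be a Jordan curve; without uniform distortion control on the sequence of quasicircle boundaries (bounded quasiconstant, or bounded geometry for the covering restrictions $f^n\colon A_{s_0,\dots,s_n}\to A_{s_n}$), the boundary curves could degenerate in a way that the limit is a more complicated continuum. The McMullen argument works in the Cantor-of-circles case precisely because the whole Julia set is trapped in a single doubly connected polynomial-like structure with uniform distortion; extracting the analogous statement component-by-component here is exactly the content of the theorem in \cite{RationalMapsDisconnectedJuliaSet}, so you would be reproving it, not using a shortcut around it.

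The buriedness part is also handled differently, and more simply, in the paper: it observes that $f$ has only countably many Fatou domains, each of which has only countably many Jordan curves among the connected components of its boundary (because every Fatou domain of $f$ is an iterated preimage of a simply connected domain, hence finitely connected), so only countably many of the uncountably many wandering Jordan curve components can touch a Fatou boundary. This is weaker than what you try to show (they conclude ``uncountably many are buried'', not ``all of them are buried'') but it suffices for the lemma and avoids subtleties. Your version has a concrete misstep: you write that a Fatou component's boundary ``is an iterated preimage of $J_\alpha$ or of an equipotential,'' but equipotentials are interior curves of Fatou domains, not boundary curves; the boundary of every Fatou domain lies in $J(f)$. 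And the closing ``arbitrarily close on both sides'' argument, while morally plausible, needs more care: a Fatou component accumulating on $J_s$ from one side might be squeezed between $J_s$ and a nearby $J_{s'}$, and showing this cannot happen amounts to a nontrivial separation argument that you would need to carry out. In contrast, the cardinality argument in the paper sidesteps all of this. Your itinerary bookkeeping (characterizing the relevant sequences as those with infinitely many $3$'s and not eventually periodic) is correct and agrees with what the paper uses implicitly.
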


\begin{proof}
This is mostly a consequence of the main result in \cite{RationalMapsDisconnectedJuliaSet} claiming that every wandering Julia component of a geometrically finite rational map is either a point or a Jordan curve. Here our map $f$ is hyperbolic (from Lemma \ref{LemDegree}) therefore every wandering Julia component in $\JJcrit(f)$ must be a Jordan curve (since a point is obviously not postcritically separating). Moreover, according to the proof of Lemma \ref{LemEncoding}, the set of wandering Julia components in $\JJcrit(f)$ exactly corresponds to the set of all the infinite sequences in $\Sigma^{\star}$ which are not eventually periodic. In particular, there are uncountably many such Julia components. Finally, uncountably many of them must be buried since the Fatou set only has countably many Fatou domains and each of them only has countably many Jordan curves as connected components of its boundary.
\end{proof}

\begin{lem}[Complex type buried Julia components]
$J_{\alpha}$ and all its countably many preimages, are buried Julia components in $J(f)$.
\end{lem}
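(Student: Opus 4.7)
The plan is to show first that $J_\alpha$ itself is buried, and then to deduce the burial of every preimage by propagation via the boundary identity $f(\partial F) = \partial f(F)$, valid for any Fatou domain $F$ because $f|_{\overline{F}}\colon\overline{F}\to\overline{f(F)}$ is surjective and $f(J(f)) \subset J(f)$. Countability of the set of preimages is immediate since each $f^{-n}(J_\alpha)$ has at most $(\deg f)^n$ connected components, so $\bigcup_{n \geq 0} f^{-n}(J_\alpha)$ consists of countably many Julia components.

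Hyperbolicity of $f$ (Lemma \ref{LemDegree}) ensures that every Fatou domain lies in the basin of the super-attracting cycle $\{z_0,z_1,z_2,z_3\}$ and is therefore eventually mapped onto some immediate basin $B^{\mathrm{im}}_k$. Combined with $f^n(J_\alpha) = J_\alpha$ and the identity above, the burial of $J_\alpha$ reduces to proving $\overline{B^{\mathrm{im}}_k} \cap J_\alpha = \emptyset$ for each $k \in \{0,1,2,3\}$. The idea is to exhibit, inside the component of $\CC\setminus J_\alpha$ containing $z_k$, a wandering Jordan-curve Julia component of $f$ that separates $B^{\mathrm{im}}_k$ from $J_\alpha$.

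Define $j(k) \in \{0,1,2\}$ to be the index with $z_k \in \varphi(B(\widehat{z_{j(k)}}))$: i.e.\ $j(1) = 1$, $j(2) = 2$, $j(0) = j(3) = 0$. In fact $z_k \in \varphi(D(\beta_{j(k)}))$, using $z_0 = \varphi(c) \in \varphi(A(\gamma_{0,1},\gamma_{3,1})) \subset \varphi(D(\beta_0))$ and $z_3 = \varphi(\widehat{z_0}) \in \varphi(D(\delta_{3,c}^{-})) \subset \varphi(D(\beta_0))$. Pick $q \in \JJ(\HH_P) \cap e_{j(k)}$ that is neither eventually $P$-mapped to $\alpha$ nor eventually $P$-periodic; such $q$ form an uncountable set since $\JJ(\HH_P) \cap e_{j(k)}$ is a Cantor set whereas preperiodic points and $P$-preimages of $\alpha$ are countable. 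By Lemma \ref{LemEncoding} and the circle-type conclusion of Theorem \ref{ThmMain2}, $J' := h^{-1}(q) \in \mathcal{A} \subset \JJcrit(f)$ is a wandering Jordan curve, realized by the encoding as an essential Jordan curve in $\varphi(A_{j(k)}) \subset \varphi(B(\widehat{z_{j(k)}}))$. The Jordan curve $J'$ splits $\CC$ into two disks; the one $V$ not containing $\varphi(\alpha_{j(k)})$ is connected, disjoint from $\varphi(\alpha_{j(k)})$, meets $\varphi(B(\widehat{z_{j(k)}}))$ (it contains $\varphi(\beta_{j(k)})$), hence lies entirely inside $\varphi(B(\widehat{z_{j(k)}}))$; moreover $V \supset \varphi(D(\beta_{j(k)})) \ni z_k$. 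Since $B^{\mathrm{im}}_k$ is a connected open Fatou set meeting $V$ and disjoint from $J' \subset J(f)$, it follows that $B^{\mathrm{im}}_k \subset V$, hence $\overline{B^{\mathrm{im}}_k} \subset V \cup J' \subset \varphi(B(\widehat{z_{j(k)}}))$. But $J_\alpha \cap \varphi(B(\widehat{z_{j(k)}})) = \varphi(J(\widehat{f}) \cap B(\widehat{z_{j(k)}})) = \emptyset$, giving $\overline{B^{\mathrm{im}}_k} \cap J_\alpha = \emptyset$ as required.

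The main obstacle lies in the geometric verification that each super-attracting point $z_k$ sits strictly inside the equipotential disk $\varphi(D(\beta_{j(k)}))$ and that the encoded component $J'$ indeed realizes as an essential Jordan curve inside $\varphi(A_{j(k)})$; both facts follow by inspection of the annular arrangement of Section \ref{SecConstruction} together with the symbolic description in the proof of Lemma \ref{LemEncoding}. Once $J_\alpha$ is known to be buried, the burial of any preimage $J'$ with $f^n(J') = J_\alpha$ is immediate: a putative intersection $p \in J' \cap \partial F'$ with some Fatou domain $F'$ would produce $f^n(p) \in J_\alpha \cap \partial f^n(F')$, contradicting the burial of $J_\alpha$ (since $f^n(F')$ is again a Fatou domain).
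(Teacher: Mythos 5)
Your proof is correct, but it takes a different route from the paper's. The paper argues directly with the accumulation structure near $J_\alpha$: it observes that the periodic symbolic sequences in $S_\alpha$ are not isolated in $\Sigma$, so each $\alpha_k=\partial B(\widehat{z_k})$ is accumulated from inside $B(\widehat{z_k})$ by essential Julia components in $A_k$, which prevents any Fatou domain in $B(\widehat{z_k})$ from touching $\alpha_k$; it then handles Fatou domains in $K_\alpha-J_\alpha$ by pulling back along the dynamics of $\widehat{f}$ (using that every component of $K_\alpha-J_\alpha$ is eventually mapped onto some $B(\widehat{z_k})$), and disposes of the preimages of $J_\alpha$ by the same continuity argument. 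You instead push forward: using hyperbolicity to reduce everything to the four immediate basins $B_k^{\mathrm{im}}$ via $f(\partial F)\subset\partial f(F)$, and then separating each $\overline{B_k^{\mathrm{im}}}$ from $J_\alpha$ by a single wandering Jordan-curve component $J'$ chosen essentially inside $\varphi(A_{j(k)})$. This is somewhat more economical — one separating curve per $k$, only four cases to check, and the treatment of the preimages of $J_\alpha$ falls out of the same forward-iteration step — whereas the paper's argument is more intrinsic to the symbolic picture and avoids having to locate each $z_k$ within the equipotential pattern. One cosmetic remark: you invoke ``the circle-type conclusion of Theorem \ref{ThmMain2}'' for the fact that $J'$ is a Jordan curve; to avoid the appearance of circularity you should cite the circle-type lemma (proved immediately before this one) or, directly, the result of \cite{RationalMapsDisconnectedJuliaSet} that a wandering Julia component of a hyperbolic rational map is a point or a Jordan curve, together with the fact that a postcritically separating component cannot be a point.
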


\begin{proof}
Coming back to the proof of Lemma \ref{LemEncoding}, remark that every infinite sequence in $S_{\alpha}$ is not isolated in $\Sigma$. Therefore, $\alpha_{k}$ has no intersection with the boundary of any Fatou domain contained in $B(\widehat{z_{k}})$ for every $k\in\{0,1,2\}$. It remains to show that $J_{\alpha}$ has no intersection with the boundary of any Fatou domain in $K_{\alpha}=\CC-(B(\widehat{z_{0}})\cup B(\widehat{z_{1}})\cup B(\widehat{z_{2}}))$. Recall that every connected component of $K_{\alpha}-J_{\alpha}$, that is a connected component of $\CC-J_{\alpha}$, is eventually mapped under iteration onto $B(\widehat{z_{k}})$ for some $k\in\{0,1,2\}$ (since $f$ is defined to be $\widehat{f}$ on $K_{\alpha}\subset D(\beta_{0,1})$). By continuity of $f$, it follows that $J_{\alpha}$ has no intersection with the boundary of any Fatou domain contained in any connected component of $K_{\alpha}-J_{\alpha}$. Consequently $J_{\alpha}$ is buried. The same holds as well for every preimage of $J_{\alpha}$ by continuity of $f$.
\end{proof}


\section{Explicit formula in the cubic case}\label{SecFormula}

In this section, we proof Theorem \ref{ThmPersianCarpet} stated in the introduction (see Section \ref{SecIntroduction}). Firstly we show that a particular choice of the weight function $w$ gives a rational map of degree $3$ (in Lemma \ref{LemExampleDegree3}). Then we compute an explicit formula for this particular example.

\begin{lem}\label{LemExampleDegree3}
The following weight function on the set of edges of $\HH_{P}$
$$(d_{0},d_{1},d_{2},d_{3})=(1,2,2,1)$$
satisfies assumptions (\ref{H1}) and (\ref{H2}) from Theorem \ref{ThmMain1} and Theorem \ref{ThmMain2}. In particular there are some rational maps of degree $3$ whose Julia set contains buried Julia components of several types: \\
\begin{tabularx}{\textwidth}{rX}
	\emph{(point type)} & uncountably many points; \\
	\emph{(circle type)} & uncountably many Jordan curves; \\
	\emph{(complex type)} & countably many preimages of a fixed Julia component which is quasiconformally homeomorphic to the connected Julia set of $\widehat{f}:z\mapsto\frac{1}{(z-1)^{2}}$.
\end{tabularx}
\end{lem}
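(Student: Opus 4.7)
The plan is to verify the two assumptions (\ref{H1}) and (\ref{H2}) directly for the weight function $(d_{0},d_{1},d_{2},d_{3})=(1,2,2,1)$, invoke Theorem \ref{ThmMain1} and Theorem \ref{ThmMain2} to produce a suitable rational map, and then identify the branching map $\widehat{f}$ coming from Lemma \ref{LemBranchingMap} with the explicit formula $z\mapsto\frac{1}{(z-1)^{2}}$ using the uniqueness clause of that lemma.

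For (\ref{H1}), I would just compute $\widehat{d}=\frac{1}{2}(1+2+2-1)=2$, which is an integer $\geqslant 2$, and check $\max\{d_{0},d_{1},d_{2}\}=2=\widehat{d}$; both requirements follow immediately. For (\ref{H2}), I would use the explicit characteristic polynomial of the transition matrix $M$ recorded in the third remark after Definition \ref{DefTransitionMatrix}, namely $X^{4}-(\frac{1}{d_{0}d_{1}d_{2}}+\frac{1}{d_{1}d_{2}d_{3}})X-\frac{1}{d_{0}d_{1}d_{2}d_{3}}$. Substituting the chosen weights produces
$$p(X)=X^{4}-\tfrac{1}{2}X-\tfrac{1}{4}.$$
Evaluating at $X=1$ gives $p(1)=\frac{1}{4}>0$, while $p'(X)=4X^{3}-\frac{1}{2}>0$ on $[1,+\infty)$, so $p$ is strictly increasing past $X=1$ and remains positive there. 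Consequently the Perron-Frobenius leading eigenvalue $\lambda(\HH_{P},w)$, which is the largest non-negative real root of $p$, is strictly less than $1$, proving (\ref{H2}).

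With both hypotheses verified, Theorem \ref{ThmMain1} and Theorem \ref{ThmMain2} provide a hyperbolic rational map $f$ of degree $\widehat{d}+d_{3}=2+1=3$ whose Julia set contains uncountably many buried points, uncountably many buried Jordan curves, and countably many preimages of a fixed Julia component $J_{\alpha}$ quasiconformally homeomorphic to $J(\widehat{f})$. To finish, I would check that $z\mapsto\frac{1}{(z-1)^{2}}$ fulfills the defining properties of $\widehat{f}$ in Lemma \ref{LemBranchingMap}: direct computation shows it is a degree $2$ rational map whose only critical points are $z=1$ (mapped to $\infty$) and $z=\infty$ (mapped to $0$), forming the super-attracting $3$-cycle $0\to 1\to\infty\to 0$ with local degrees $(1,2,2)=(d_{0},d_{1},d_{2})$ at $(0,1,\infty)=(\widehat{z_{0}},\widehat{z_{1}},\widehat{z_{2}})$. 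The uniqueness statement in Lemma \ref{LemBranchingMap} (up to Möbius conjugation, equivalently once the super-attracting cycle is fixed) then identifies $\widehat{f}$ with $z\mapsto\frac{1}{(z-1)^{2}}$.

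The real content of the proof is thus concentrated in the two short verifications (H1) and (H2); everything else is bookkeeping followed by invoking the two main theorems and a uniqueness lemma. The only step that is not entirely mechanical is the sign check for $p(X)$ at $X=1$, which is the heart of the unobstructedness argument in this cubic case.
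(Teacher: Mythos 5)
Your proposal is correct and follows essentially the same route as the paper's proof: the (H1) check, the characteristic polynomial $X^{4}-\tfrac{1}{2}X-\tfrac{1}{4}$ for (H2), the invocation of Theorems \ref{ThmMain1} and \ref{ThmMain2} to get a degree-$3$ map, and the identification of $\widehat{f}$ via the uniqueness clause of Lemma \ref{LemBranchingMap} are exactly the steps the paper takes. The only small difference is that you replace the paper's numerical observation $\lambda(\HH_{P},w)\approx 0.918<1$ with a rigorous sign-and-monotonicity argument ($p(1)=\tfrac{1}{4}>0$ and $p'(X)=4X^{3}-\tfrac{1}{2}>0$ on $[1,+\infty)$, so $p$ has no root there and the Perron--Frobenius root lies in $(0,1)$), which is a tidy improvement.
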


\begin{proof}
Assumption (\ref{H1}) is obviously satisfied, indeed
$$\widehat{d}=\frac{1}{2}(d_{0}+d_{1}+d_{2}-1)=\frac{1}{2}(1+2+2-1)=2=\max\{d_{0},d_{1},d_{2}\}.$$

For assumption (\ref{H2}), the transition matrix (see Definition \ref{DefTransitionMatrix}) for this choice of weight function is given by
$$M=\left(\begin{array}{cccc} 0 & 1 & 0 & 0 \\ 0 & 0 & \frac{1}{2} & 0 \\ \frac{1}{2} & 0 & 0 & \frac{1}{2} \\ 1 & 1 & 0 & 0 \end{array}\right)$$
and an easy computation shows that $\lambda(\HH_{P},w)$ is the largest root of $X^{4}-\frac{1}{2}X-\frac{1}{4}$ that is $\lambda(\HH_{P},w)\approx 0.918<1$.

Applying Theorem \ref{ThmMain1} and Theorem \ref{ThmMain2} gives a rational map of degree $\widehat{d}+d_{3}=2+1=3$.

Furthermore, recall that the rational map $\widehat{f}$ which appears in Theorem \ref{ThmMain2} has degree $\widehat{d}=2$ and has only one critical orbit which is a super-attracting cycle $\{\widehat{z_{0}},\widehat{z_{1}},\widehat{z_{2}}\}$ of period $3$ such that the local degrees of $\widehat{f}$ at $\widehat{z_{0}}$, $\widehat{z_{1}}$ and $\widehat{z_{2}}$ are $d_{0}=1$, $d_{1}=2$ and $d_{2}=2$, respectively. Up to conjugation by a Möbius map, we may assume that $\widehat{z_{0}}=0$, $\widehat{z_{1}}=1$ and $\widehat{z_{2}}=\infty$. It turns out that there is then only one such quadratic rational map which is $\widehat{f}:z\mapsto\frac{1}{(z-1)^{2}}$.
$$\xymatrix{\widehat{z_{0}}=0 \ar[rr]^{1:1} && \widehat{z_{1}}=1 \ar[rr]^{2:1} && \widehat{z_{2}}=\infty \ar@/_{2pc}/[llll]_{2:1}}$$
\end{proof}

Remark that this choice of weight function is the only one which gives a degree 3 and which satisfies assumptions (\ref{H1}) and (\ref{H2}).

The construction by quasiconformal surgery detailed in Section \ref{SecConstruction} does not provide an algebraic formula for the rational map $f$ in Theorem \ref{ThmMain1} and Theorem \ref{ThmMain2}. Furthermore the degree $\widehat{d}+d_{3}$ of $f$ increases quickly with the weight function $w$ so the algebraic relations behind are complicated to study. However the particular rational map of degree 3 coming from Lemma \ref{LemExampleDegree3} is simple enough to allow a computation by hand of an algebraic formula.

Let $f$ be a rational map coming from the construction detailed in Section \ref{SecConstruction} for the particular choice of weight function in Lemma \ref{LemExampleDegree3} . Recall that the local degrees of $f$ at $z_{1}$, $z_{2}$ and $z_{3}$ are $d_{1}=2$, $d_{2}=2$ and $d_{3}=1$, respectively. In particular, $z_{1}$ and $z_{2}$ are simple critical points. It remains $d_{0}+d_{3}=1+1=2$ critical points counted with multiplicity coming from definition of $f$ near $z_{0}$ (see Lemma \ref{LemFolding}), namely two simple critical points, one is $z_{0}$ by construction and the orbit of the other one accumulates the super-attracting cycle $\{z_{0},z_{1},z_{2},z_{3}\}$.

Up to conjugation by a Möbius map, we assume that $z_{1}=1$, $z_{2}=\infty$ and $z_{3}=0$. So 1 and $\infty$ are critical points whereas 0 is a singular point. In order to simplify notations, denote by $\lambda$ the critical point $z_{0}$ ($\lambda$ will be the parameter of our family) and by $\lambda'$ the last critical point.
$$\xymatrix @!0 @R=1.5pc @C=4pc {
	z_{0}=\lambda \ar[rr]^{2:1} && z_{1}=1 \ar[rr]^{2:1} && z_{2}=\infty \ar[rr]^{2:1} && z_{3}=0 \ar@/_{3pc}/[llllll]_{1:1} \\
	\hspace{1cm}\lambda' \ar[rr]^{2:1} && \dots &&&&
}$$ 

Since $f$ has degree 3, it is of the form
$$f:z\mapsto\dfrac{a_{3}z^{3}+a_{2}z^{2}+a_{1}z+a_{0}}{b_{3}z^{3}+b_{2}z^{2}+b_{1}z+b_{0}}.$$
Since $z_{1}=1$ is mapped to $z_{2}=\infty$ with a local degree 2, the denominator may factor as
$$f:z\mapsto\dfrac{a_{3}z^{3}+a_{2}z^{2}+a_{1}z+a_{0}}{(z-1)^{2}(b'_{1}z+b'_{0})}.$$
We do likewise for $z_{2}=\infty$ which is mapped to $z_{3}=0$ with a local degree 2.
$$f:z\mapsto\dfrac{a_{1}z+a_{0}}{(z-1)^{2}(b'_{1}z+b'_{0})}.$$
Now use the fact that $z_{3}=0$ is mapped to $z_{0}=\lambda$ to get
\begin{equation}\label{EqFormula}
	f:z\mapsto\dfrac{a_{1}z+\lambda}{(z-1)^{2}(b'_{1}z+1)}.
\end{equation}
It remains two informations coming from the fact that $z_{0}=\lambda$ is mapped to $z_{1}=1$ with a local degree 2. Namely $f(\lambda)=1$ and $f'(\lambda)=0$ which lead to the two following equations satisfied by $a_{1}$ and $b'_{1}$.
$$\left\{\begin{array}{rcl} (\lambda-1)^{2}(\lambda b'_{1}+1) & = & \lambda(a_{1}+1) \\ a_{1}(\lambda-1)^{2}(\lambda b'_{1}+1) & = & \lambda (a_{1}+1)\Big[(3\lambda^{2}-4\lambda+1)b'_{1}+2(\lambda-1)\Big] \end{array}\right.$$
Remark that we may easily simplify the second equation by using the first one (luckily)
$$\left\{\begin{array}{rcl} (\lambda-1)^{2}(\lambda b'_{1}+1) & = & \lambda(a_{1}+1) \\ a_{1} & = & (3\lambda^{2}-4\lambda+1)b'_{1}+2(\lambda-1) \end{array}\right.$$
or equivalently
$$\left\{\begin{array}{rcrcl}
	\lambda a_{1} & - & \lambda(1-\lambda)^{2}b'_{1} & = & 1-3\lambda+\lambda^{2} \\
	a_{1} & - & (1-\lambda)(1-3\lambda)b'_{1} & = & -2+2\lambda
\end{array}\right.$$
and solving this linear system of two equations gives
$$\left\{\begin{array}{rcl}
	a_{1} & = & \dfrac{(1-3\lambda)(1-3\lambda+\lambda^{2})-\lambda(1-\lambda)(-2+2\lambda)}{\lambda(1-3\lambda)-\lambda(1-\lambda)}=\dfrac{1-4\lambda+6\lambda^{2}-\lambda^{3}}{-2\lambda^{2}} \\
	b'_{1} & = & \dfrac{(1-3\lambda+\lambda^{2})-\lambda(-2+2\lambda)}{-\lambda(1-\lambda)^{2}+\lambda(1-\lambda)(1-3\lambda)}=\dfrac{1-\lambda-\lambda^{2}}{-2\lambda^{2}(1-\lambda)}
\end{array}\right..$$
Finally, putting these expressions in expression (\ref{EqFormula}) leads to the following formula for $f$ which depends on the parameter $\lambda$.
$$\boxed{f_{\lambda}:z\mapsto\dfrac{(1-\lambda)\Big[(1-4\lambda+6\lambda^{2}-\lambda^{3})z-2\lambda^{3}\Big]}{(z-1)^{2}\Big[(1-\lambda-\lambda^{2})z-2\lambda^{2}(1-\lambda)\Big]}}$$

Remark that $f_{\lambda}(z)=\frac{1}{(z-1)^{2}}(1-4\lambda+O_{\lambda\rightarrow 0}(\lambda^{2}))$ for every complex number $z$, thus $f_{\lambda}$ is actually a particular perturbation of $f_{0}=\widehat{f}:z\mapsto\frac{1}{(z-1)^{2}}$.

Some more computations provide an algebraic formula for the critical point $\lambda'$, namely
$$\lambda'=-\dfrac{\lambda(1-6\lambda+11\lambda^{2}-10\lambda^{3}+5\lambda^{4})}{(1-\lambda-\lambda^{2})(1-4\lambda+6\lambda^{2}-\lambda^{3})}=-\lambda+\underset{\lambda\rightarrow 0}{O}(\lambda^{2}).$$

According to the construction detailed in Section \ref{SecConstruction}, there exist some choices of $\lambda$ such that $f_{\lambda}$ satisfies Theorem \ref{ThmPersianCarpet}. Recall that the two critical points $z_{0}=\lambda$ and $\lambda'\sim_{\lambda\rightarrow 0}-\lambda$ should lie in $B(\widehat{z_{0}})$ (see Section \ref{SecConstruction}), and hence near $\widehat{z_{0}}$ which corresponds to $z_{3}=0$. Indeed, we can roughly prove for every $|\lambda|>0$ small enough that
\begin{itemize}
	\item $f_{\lambda}(\lambda')$ lies in a disk centered at $z_{1}=1$ and of radius of order $|\lambda|$;
	\item the image under $f_{\lambda}$ of a disk centered at $z_{1}=1$ and of radius of order $|\lambda|$ is contained in the complement of a disk centered at $0$ (thus containing $z_{2}=\infty$) and of radius of order $|\lambda|^{-2}$;
	\item the image under $f_{\lambda}$ of the complement of a disk centered at $0$ (thus containing $z_{2}=\infty$) and of radius of order $|\lambda|^{-2}$ is contained in a disk centered at $z_{3}=0$ and of radius of order $|\lambda|^{4}$;
	\item the image under $f_{\lambda}$ of a disk centered at $z_{3}=0$ and of radius of order $|\lambda|^{4}$ is contained in a disk centered at $z_{0}=\lambda$ and of radius of order $|\lambda|^{2}$;
	\item the image under $f_{\lambda}$ of a disk centered at $z_{0}=\lambda$ and of radius of order $|\lambda|^{2}$ is contained in a disk centered at $z_{1}=1$ and of radius of order $|\lambda|^{3}$.
\end{itemize}

It turns out that the orbit of the critical point $\lambda'$ accumulates the super-attracting cycle $\{z_{0},z_{1},z_{2},z_{3}\}$ for every $|\lambda|>0$ small enough. Consequently, we may encode the exchanging dynamics of Julia components of $f_{\lambda}$ as it is explained in Section \ref{SecProperties}, proving that $f_{\lambda}$ satisfies Theorem \ref{ThmPersianCarpet} for every $|\lambda|>0$ small enough.

Numerically, picking any parameter $\lambda$ in the  big hyperbolic component surrounding $0$ of the parameter space of the family $f_{\lambda}$ (see Figure \ref{FigParameterPersianCarpet}.a) provides a Persian Carpet example in the dynamical plane (see Figure \ref{FigParameterPersianCarpet}.b).

\begin{figure}[!htb]
	\begin{center}
	\includegraphics[width=\textwidth]{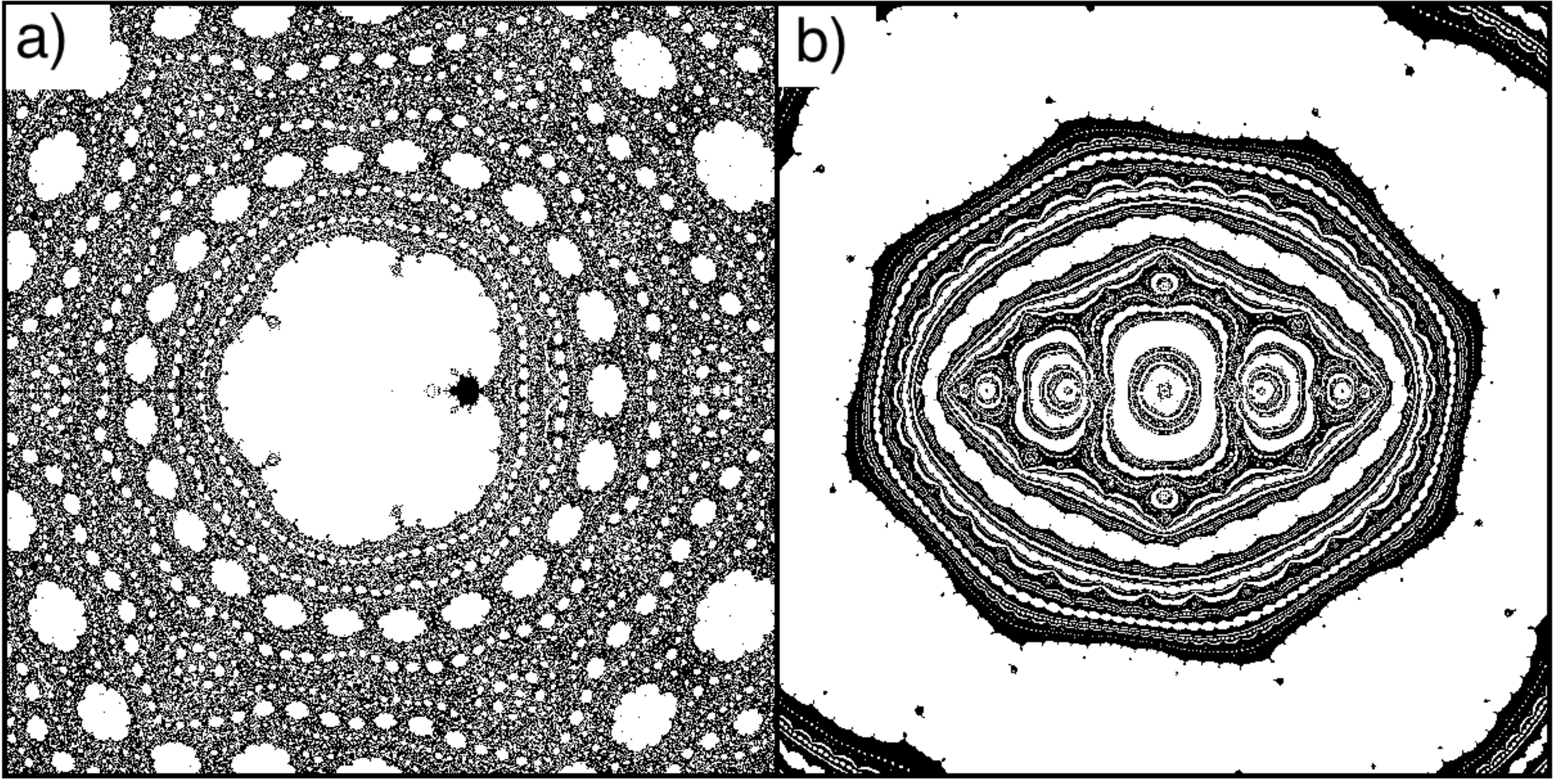}
	\caption{\textbf{a)} The parameter plane of $f_{\lambda}$ for $|\lambda|\lessapprox 10^{-2}$, that includes the bifurcation locus (in black) and hyperbolic parameters (in white). $0$ is at the center of the picture and the big hyperbolic component around corresponds to the Persian carpets. \\ \textbf{b)} The dynamical plane for $\lambda\approx10^{-3}$, that includes the Persian carpet $J(f_{\lambda})$ (in black) and the Fatou set (in white).}
	\label{FigParameterPersianCarpet}
	\end{center}
\end{figure}


\section{Appendix}\label{SecAppendix}

In this section, we collect some technical results used in the construction of Section \ref{SecConstruction}.

\subsection{A particular solution of the Hurwitz problem}

The first result of this section deals with the Hurwitz problem on the topological sphere $\SS$. Namely given an abstract branch data of degree $d\geqslant 2$, that is a table of positive integers $\mathcal{D}=(d_{i,j})_{(i,j)\in\mathcal{I}}$ where $\mathcal{I}=\{(i,j)\ /\ i\in\{1,2,\dots,n\}\ \text{and}\ j\in\{1,2,\dots,k_{i}\}\}$ for some positive integers $n$, $k_{1}$, $k_{2}$, \dots, $k_{n}$ and such that for every $i\in\{1,2,\dots,n\}$:
\begin{equation}\label{HurwitzCondition}
	d_{i,j}\geqslant 2\ \text{for some}\ j\in\{1,2,\dots,k_{i}\},\ \text{and}\ \sum_{j=1}^{k_{i}}d_{i,j}=d,
\end{equation}
we consider the question on realizability of this abstract branch data by a branched covering on $\SS$, that is the existence of a degree $d$ branched covering $H:\SS\rightarrow\SS$ and a finite collection of distinct points $X=\{x_{i,j}\,/\,(i,j)\in\mathcal{I}\}$ in $\SS$ such that:
\begin{itemize}
	\item $\forall(i,j)\in\mathcal{I}$, $H(x_{i,j})=y_{i}$ for some $y_{i}\in\SS$;
	\item $H|_{\SS-X}:\SS-X\rightarrow\SS-\{y_{i}\,/\,i\in\{1,2,\dots,n\}\}$ is a degree $d$ covering;
	\item $\forall(i,j)\in\mathcal{I}$, the local degree of $H$ at $x_{i,j}$ is $d_{i,j}$.
\end{itemize}

Adolf Hurwitz has proved (see \cite{HurwitzProblem}) that the solution is as follows. Let $S_{d}$ be the symmetric group of all permutations of $\{1,2,\dots,d\}$. Then $\mathcal{D}$ is realizable if and only if there exist permutations $\sigma_{1}$, $\sigma_{2}$, \dots, $\sigma_{n}$ in $S_{d}$ such that:
\begin{description}
	\item[(i)] $\forall i\in\{1,2,\dots,n\}$, $\sigma_{i}$ is a product of $k_{i}$ disjoint cycles of length $d_{i,1}$, $d_{i,2}$, \dots, $d_{i,k_{i}}$;
	\item[(ii)] $\sigma_{1}\sigma_{2}\dots\sigma_{n}=1$ in $S_{d}$;
	\item[(iii)] $\langle\sigma_{1},\sigma_{2},\dots,\sigma_{n}\rangle$ transitively acts on $\{1,2,\dots,d\}$.
\end{description}

However, such algebraic conditions are rather difficult to verify for an arbitrary abstract branch data $\mathcal{D}$. Easier sufficient and necessary conditions have been provided in some specific cases (see for instance \cite{RealizabilityBranchedCoverings} or \cite{RealizabilityBaranski}). The following lemma gives the solution in a very special case involved in Lemma \ref{LemBranchingMap}.

\begin{lem}[An Hurwitz solution]\label{LemHurwitz}
Let $\mathcal{D}$ be an abstract branch data of degree $d\geqslant 2$ such that $n=3$ and $d_{i,j}=1$ for every $i\in\{1,2,3\}$ and $j\geqslant 2$. Then $\mathcal{D}$ is realizable if and only if the following condition is satisfied.
\begin{equation}\label{H1'}
	d=\frac{1}{2}(d_{1,1}+d_{2,1}+d_{3,1}-1) \tag{H1'}
\end{equation}
\end{lem}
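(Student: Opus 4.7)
My plan is to invoke the algebraic criterion of Hurwitz recalled just before the statement: realizability of $\mathcal{D}$ is equivalent to the existence of a triple $\sigma_{1}, \sigma_{2}, \sigma_{3} \in S_{d}$ with respective cycle types $(d_{i,1}, 1^{d - d_{i,1}})$, product $\sigma_{1}\sigma_{2}\sigma_{3} = 1$, and transitive action on $\{1, \ldots, d\}$. Necessity of (\ref{H1'}) is then immediate from the Riemann-Hurwitz formula applied to any realizing branched covering $H\colon \SS \to \SS$: the only critical points are $x_{1,1}, x_{2,1}, x_{3,1}$ with local degrees $d_{1,1}, d_{2,1}, d_{3,1}$, so the total ramification $(d_{1,1} - 1) + (d_{2,1} - 1) + (d_{3,1} - 1)$ must equal $2d - 2$, which rearranges to (\ref{H1'}).

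For sufficiency, write $a = d_{1,1}$, $b = d_{2,1}$, $c = d_{3,1}$. Condition (\ref{HurwitzCondition}) forces $a, b, c \geq 2$, while the equality $\sum_{j} d_{i,j} = d$ forces $a, b, c \leq d$. Under (\ref{H1'}) we have $a + b + c = 2d + 1$, so combining $c \leq d$ with this identity yields $t := a + b - d - 1 \geq 0$ and symmetrically $a - t = d - b + 1 \geq 1$. I then propose the explicit ansatz
$$\sigma_{1} = (1, 2, \ldots, a), \qquad \sigma_{2} = (a, a-1, \ldots, a-t, a+1, a+2, \ldots, d),$$
where $\sigma_{2}$ is a cycle of length $(t+1) + (d-a) = b$ on the segment $\{a-t, \ldots, d\} \subset \{1, \ldots, d\}$, and set $\sigma_{3} := (\sigma_{1}\sigma_{2})^{-1}$.

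The remaining verifications are purely mechanical. A case-by-case analysis according to whether $i$ lies in $\{1, \ldots, a-t-1\}$, at $i = a-t$, in $\{a-t+1, \ldots, a\}$, in $\{a+1, \ldots, d-1\}$, or at $i = d$, shows that $\sigma_{1}\sigma_{2}$ fixes the $t$ elements of $\{a-t+1, \ldots, a\}$ and acts as a single cycle $(1, 2, \ldots, a-t, a+1, \ldots, d)$ of length $d - t = c$ on the complement. Hence $\sigma_{3}$ has the required cycle type $(c, 1^{d-c})$. Transitivity of $\langle \sigma_{1}, \sigma_{2}, \sigma_{3}\rangle$ is then clear, since the supports $\{1, \ldots, a\}$ of $\sigma_{1}$ and $\{a-t, \ldots, d\}$ of $\sigma_{2}$ together cover $\{1, \ldots, d\}$ and overlap on the nonempty segment $\{a-t, \ldots, a\}$. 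The main (minor) obstacle is bookkeeping of the degenerate boundary cases — notably $t = 0$, $a = d$, or $b = d$ — where certain sub-ranges in the case analysis become empty; checking that the cycle computation remains valid in each such situation is straightforward but requires some care.
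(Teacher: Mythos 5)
Your proposal is correct and follows essentially the same route as the paper: necessity via Riemann--Hurwitz, sufficiency by exhibiting an explicit pair $\sigma_1=(1,\dots,d_{1,1})$ and a $d_{2,1}$-cycle $\sigma_2$ supported on $\{d-d_{2,1}+1,\dots,d\}$ whose support overlaps that of $\sigma_1$, then setting $\sigma_3=(\sigma_1\sigma_2)^{-1}$. The only difference is cosmetic: the paper takes $\sigma_2=(d,d-1,\dots,d-d_{2,1}+1)$ as a pure descending cycle, whereas you reorder the same support as $(a,a-1,\dots,a-t,a+1,\dots,d)$; both choices make $\sigma_1\sigma_2$ a $d_{3,1}$-cycle and the verification is of the same nature.
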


Remark that in this special case, the abstract branch data $\mathcal{D}$ is uniquely determined by a degree $d\geqslant 2$ together with three positive integers $d_{1,1}$, $d_{2,1}$, and $d_{3,1}$ such that $2\leqslant d_{i,1}\leqslant d$ for every $i\in\{1,2,3\}$.

\begin{proof}
The necessity of condition (\ref{H1'}) comes from the Riemann-Hurwitz formula. Indeed if there exists a degree $d$ branched covering $h:\SS\rightarrow\SS$ which realizes $\mathcal{D}$ then the number of critical points counted with multiplicity is given by
$$2d-2=(d_{1,1}-1)+(d_{2,1}-1)+(d_{3,1}-1)$$
since there are no more critical points than $x_{1,1}$, $x_{2,1}$, and $x_{3,1}$ from assumption.

For the sufficiency of condition (\ref{H1'}), consider the two following cycles in $S_{d}$
$$\sigma_{1}=(1,2,\dots,d_{1,1})\ \text{and}\ \sigma_{2}=(d,d-1,\dots,d-d_{2,1}+1)$$
of length $d_{1,1}$ and $d_{2,1}$ respectively. Notice that $d_{1,1}=(d-d_{2,1}+1)+(d-d_{3,1})$ from condition (\ref{H1'}), and $d_{3,1}\leqslant d$ from assumption (\ref{HurwitzCondition}). Therefore $d-d_{2,1}+1\leqslant d_{1,1}$ and in particular $\langle\sigma_{1},\sigma_{2}\rangle$ transitively acts on $\{1,2,\dots,d\}$.

A simple computation shows that the composition of $\sigma_{1}$ and $\sigma_{2}$ (with $\sigma_{1}$ performed first) is given by
$$\sigma_{1}\sigma_{2}=(1,2,\dots,d-d_{2,1},d,d-1,\dots,d_{1,1})$$
which is a cycle of length $(d-d_{2,1})+(d-d_{1,1}+1)=d_{3,1}$ from condition (\ref{H1'}). Denote by $\sigma_{3}$ the inverse permutation $(\sigma_{1}\sigma_{2})^{-1}$ which is a cycle of length $d_{3,1}$ as well.

Finally we get three permutations $\sigma_{1}$, $\sigma_{2}$, and $\sigma_{3}$ in $S_{d}$ which satisfy the Hurwitz conditions. Indeed \textbf{(i)} holds since $\sigma_{i}$ is a cycle of length $d_{i,1}$ for every $i\in\{1,2,3\}$, \textbf{(ii)} directly follows from definition of $\sigma_{3}$, and $\textbf{(iii)}$ holds since $\langle\sigma_{1},\sigma_{2}\rangle=\langle\sigma_{1},\sigma_{2},\sigma_{3}\rangle$ transitively acts on $\{1,2,\dots,d\}$.
\end{proof}

\subsection{An inverse Grötzsch's inequality}

The following useful result is due to Cui Guizhen and Tan Lei \cite{ThurstonSubHyperbolic}. It is the key ingredient of the proof of Lemma \ref{LemEquip}.

\begin{lem}[Inverse Grötzsch's inequality]\label{LemGrotzch}
Let $D,D'$ be two disjoint marked hyperbolic disks in $\CC$ whose boundaries (not necessarily disjoint) are respectively denoted by $\alpha,\alpha'$. Then there exists a positive constant $C>0$ such that for every pair of equipotentials $\beta$ in $D$ and $\beta'$ in $D'$ the following inequalities hold:
$$\modulus(A(\alpha,\beta))+\modulus(A(\alpha',\beta'))\leqslant\modulus(A(\beta,\beta'))\leqslant\modulus(A(\alpha,\beta))+\modulus(A(\alpha',\beta'))+C.$$
\end{lem}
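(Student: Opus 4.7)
The lemma has two parts---the classical (left) Grötzsch bound and the inverse (right) bound---and the plan is to handle them separately. For the left inequality I would simply invoke Grötzsch's classical subadditivity. Writing $D_{\beta}$ and $D_{\beta'}$ for the inner topological disks bounded by $\beta$ and $\beta'$, one has $A(\alpha,\beta)=D\setminus\overline{D_{\beta}}$ and $A(\alpha',\beta')=D'\setminus\overline{D_{\beta'}}$, while $A(\beta,\beta')=\CC\setminus(\overline{D_{\beta}}\cup\overline{D_{\beta'}})$. Disjointness of $D$ and $D'$ makes $A(\alpha,\beta)$ and $A(\alpha',\beta')$ disjoint, and any loop separating the two boundary curves of $A(\alpha,\beta)$ also separates $\beta$ from $\beta'$, so both sit inside $A(\beta,\beta')$ as disjoint essential subannuli; Grötzsch's classical inequality then delivers the left bound.

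For the inverse inequality the strategy is to trap $A(\beta,\beta')$ inside an explicit round annulus. First I would conjugate by a Möbius transformation---which preserves every modulus in sight---to send the marked point of $D$ to $0$ and the marked point of $D'$ to $\infty$. Denote by $\phi:\D\to D$ and $\psi:\D\to D'$ the two Riemann mappings, with $\phi(0)=0$ and $\psi(0)=\infty$, and expand $\phi(z)=az+O(z^{2})$ and $\psi(z)=b/z+O(1)$ near the origin. The complex numbers $a$ and $b$ depend only on $D$, $D'$ and the chosen Möbius normalization, crucially not on the equipotentials $\beta,\beta'$.

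The uniform estimate then comes from a double application of Koebe's $1/4$ theorem. Applied to the univalent map $z\mapsto\phi(Lz)$ on $\D$, whose derivative at $0$ equals $La$, it gives $D_{\beta}\supset\{|w|<L|a|/4\}$. Applied to the univalent map $z\mapsto 1/\psi(L'z)$ on $\D$, which vanishes at $0$ with derivative $L'/b$, and then taking reciprocals, it yields $D_{\beta'}\supset\{|w|>4|b|/L'\}$. The two round regions are automatically disjoint because $D_{\beta}\subset D$ and $D_{\beta'}\subset D'$ are (forcing $LL'<16|b|/|a|$ with no extra hypothesis), so
\[
A(\beta,\beta')\ \subset\ \bigl\{\, L|a|/4\,<\,|w|\,<\,4|b|/L'\,\bigr\}.
\]
Monotonicity of the modulus under inclusion of essential subannuli, together with $\modulus(A(\alpha,\beta))=\frac{1}{2\pi}\log(1/L)$ and $\modulus(A(\alpha',\beta'))=\frac{1}{2\pi}\log(1/L')$, then gives
\[
\modulus(A(\beta,\beta'))\ \leqslant\ \frac{1}{2\pi}\log\frac{16|b|}{|a|LL'}\ =\ \modulus(A(\alpha,\beta))+\modulus(A(\alpha',\beta'))+\frac{1}{2\pi}\log\frac{16|b|}{|a|},
\]
so $C:=\max\bigl\{1,\frac{1}{2\pi}\log(16|b|/|a|)\bigr\}$ works.

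The main obstacle is precisely the uniformity of $C$ in the levels $L,L'\in(0,1)$. The rescaling trick $\phi\mapsto\phi(L\cdot)$ is exactly what makes Koebe's $1/4$ deliver an $L$-linear lower bound on the inradius of $D_{\beta}$, whose logarithm matches the $\frac{1}{2\pi}\log(1/L)$ contribution of $\modulus(A(\alpha,\beta))$; without this scale-matching one would only get an estimate that degenerates as $L$ or $L'$ approaches $0$. The disjointness of $D$ and $D'$ then automatically guarantees that the Koebe disk in $D$ and the Koebe exterior in $D'$ never cross, which is what allows the round-annulus conclusion to hold for every admissible pair $(L,L')$ without any case split.
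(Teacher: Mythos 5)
Your proof is correct, and it fills in exactly the argument the paper sketches: the left bound is classical Grötzsch subadditivity applied to the two disjoint essential subannuli $A(\alpha,\beta)=D\setminus\overline{D_\beta}$ and $A(\alpha',\beta')=D'\setminus\overline{D_{\beta'}}$, and the right bound is the Koebe $1/4$ argument the paper attributes to Cui--Tan (the paper itself only cites their work rather than reproducing the proof). Your rescaling trick -- applying Koebe to $z\mapsto\phi(Lz)$ and $z\mapsto 1/\psi(L'z)$ so that the radii of the Koebe disks scale linearly in $L,L'$ and their logarithms cancel exactly against $\modulus(A(\alpha,\beta))=\frac{1}{2\pi}\log(1/L)$ and $\modulus(A(\alpha',\beta'))=\frac{1}{2\pi}\log(1/L')$ -- is the right way to obtain a constant $C$ that is uniform in the equipotential levels, and the observation that disjointness of $D,D'$ forces $|a|\leqslant 16|b|$ (so $\frac{1}{2\pi}\log(16|b|/|a|)\geqslant 0$) closes the last small gap; taking the max with $1$ is harmless.
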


The left hand side is the classical Grötzsch's inequality. The right hand side is a consequence of Koebe $1/4$ Theorem. We refer readers to \cite{ThurstonSubHyperbolic} for a complete proof.

\subsection{An annulus-disk holomorphic map}

The following lemma is a technical ingredient in the construction of Section \ref{SecConstruction} needed to holomorphically map an annulus onto a disk (see Lemma \ref{LemFolding}). It is very similar to the key lemma in \cite{DiskAnnulusSurgery} (see also \cite{QuasiconformalSurgeryBook}) about an annulus-disk branched covering. However, our annulus-disk map here requires to be holomorphic (see Lemma \ref{LemQcExtension} and Lemma \ref{LemQuasiconformalSurgery}).

\begin{lem}[Annulus-disk holomorphic map]\label{LemAnnulusDisk}
Let $n,n'$ be two positive integers. Then there exists a holomorphic branched covering $G:A(\gamma,\gamma')\rightarrow\D$ from an open annulus in $\CC$ bounded by a pair of disjoint quasicircles $\gamma,\gamma'$ onto the open unit disk $\D$ such that:
\begin{description}
	\item[(i)] $G$ has degree $n+n'$ and has $n+n'$ critical points counted with multiplicity;
	\item[(ii)] $G$ continuously extends to $\gamma\cup\gamma'$ by a degree $n$ covering $G|_{\gamma}:\gamma\rightarrow\partial\D$ and a degree $n'$ covering $G|_{\gamma'}:\gamma'\rightarrow\partial\D$;
	\item[(iii)] $\modulus(A(\gamma,\gamma'))\leqslant 1$.
\end{description}
\end{lem}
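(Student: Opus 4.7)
For $\lambda > 0$, consider
$$B_\lambda(z) := z^n - \frac{\lambda}{z^{n'}},$$
a rational map of degree $n+n'$ on $\CC$. An explicit computation of $B_\lambda'$ shows that $B_\lambda$ has $n+n'$ distinct simple critical points in $\C\setminus\{0\}$, namely the $(n+n')$-th roots of $-n'\lambda/n$, all lying on the circle $\{|z|=r_0\}$ with $r_0 := (\lambda n'/n)^{1/(n+n')}$; the corresponding critical values have common modulus
$$R_{\mathrm{crit}}(\lambda) := \frac{(n+n')\,\lambda^{n/(n+n')}}{n^{n/(n+n')}\,n'^{n'/(n+n')}}.$$
Furthermore $B_\lambda$ has a pole of order $n'$ at $0$ and behaves like $z^n$ at $\infty$. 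The plan is to exhibit $G$ as the restriction of $B_\lambda$ to a suitable annulus, followed by an affine scaling.

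For any $R > R_{\mathrm{crit}}(\lambda)$, set $D := \{|w| < R\}$ and $A := B_\lambda^{-1}(D)$. Since $\CC \setminus \overline{D}$ is a neighborhood of $\infty$ avoiding every critical value and the local degrees of $B_\lambda$ at $0$ and $\infty$ already sum to $n+n'$, the preimage $\CC \setminus \overline{A}$ consists of precisely two open topological disks: one around $0$ on which $B_\lambda$ is an $n'$-fold covering onto a neighborhood of $\infty$, and one around $\infty$ on which it is an $n$-fold covering. Their boundaries $\gamma'$ and $\gamma$ are real-analytic Jordan curves (hence quasicircles), so that $A = A(\gamma,\gamma')$ is a topological annulus. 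The restriction $B_\lambda|_A : A \to D$ is then a holomorphic branched covering of degree $n+n'$ whose critical set in $A$ is exactly the $n+n'$ simple critical points on $\{|z|=r_0\}$, with boundary degrees $n$ on $\gamma$ and $n'$ on $\gamma'$. Post-composing with the scaling $w \mapsto w/R$ yields $G := \frac{1}{R} B_\lambda|_A : A \to \D$, which establishes \textbf{(i)} and \textbf{(ii)}.

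For \textbf{(iii)}, I would vary $R$ and exploit the monotonicity of $R \mapsto \modulus(A(R))$. On the one hand, $\modulus(A(R)) \to +\infty$ as $R \to +\infty$, since $A(R)$ then exhausts $\C\setminus\{0,\infty\}$. On the other hand, applying the triangle inequality to $|B_\lambda(z)| = R$ on $\partial A$ yields the essential inclusion $A \subset \{\rho_-(R) < |z| < \rho_+(R)\}$, where $\rho_\pm(R)$ are the unique positive roots of $\rho^{n+n'} \mp R\rho^{n'} - \lambda = 0$ respectively; monotonicity of modulus under essential sub-annulus inclusion then gives $\modulus(A) \leq \frac{1}{2\pi}\log(\rho_+(R)/\rho_-(R))$. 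A short algebraic simplification at $R = R_{\mathrm{crit}}(\lambda)$, combined with the binary entropy bound $H(p) \leq \log 2$, shows that this upper bound is in fact at most $\log 2/\pi < 1$ (uniformly in the pair $(n,n')$). By continuity and the intermediate value theorem, $R$ may then be chosen so that $\modulus(A)$ equals any prescribed value in $(0,1]$.

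The main technical step is the uniform modulus estimate of the last paragraph; its essential ingredients are the triangle-inequality computation of the containing round annulus, the classical monotonicity of modulus under essential sub-annulus inclusion, and the entropy-type upper bound. Once this is in place, conditions \textbf{(i)}, \textbf{(ii)}, and \textbf{(iii)} all follow directly from the explicit construction.
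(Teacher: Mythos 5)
Your construction is the same as the paper's: both restrict a member of the McMullen family $z^n\pm\lambda/z^{n'}$ to the preimage of a round disk avoiding the finite critical values and use the (reverse) triangle inequality to trap that preimage in a round annulus. The only cosmetic difference is that you fix $\lambda$ and shrink the target radius $R$, whereas the paper fixes the target to be $\D$ and shrinks $|\lambda|$; by the scaling identity $B_\lambda(az)=a^nB_{\lambda/a^{n+n'}}(z)$ these are equivalent, and your (i), (ii) and the topological argument for why the preimage is an annulus are sound.

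However, the quantitative claim in the last paragraph is not right as stated. Writing $p=n/(n+n')$, $q=n'/(n+n')$, $N=n+n'$ and $\rho_\pm=r_0 t_\pm$ with $r_0=(\lambda q/p)^{1/N}$, your equations at $R=R_{\mathrm{crit}}$ reduce to $qt_+^N-t_+^{n'}=p$ and $qt_-^N+t_-^{n'}=p$, which are $\lambda$-independent. For $n=n'=1$ one gets $t_+=1+\sqrt2$, $t_-=\sqrt2-1$, hence $\rho_+/\rho_-=(1+\sqrt2)^2$ and
$$\frac{1}{2\pi}\log\!\left(\frac{\rho_+}{\rho_-}\right)=\frac{\log(1+\sqrt2)}{\pi}\approx 0.28,$$
which exceeds your claimed bound $\log 2/\pi\approx 0.22$; the "binary entropy'' inequality $H(p)\leqslant\log 2$ does not yield $\rho_+/\rho_-\leqslant 4$. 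The conclusion \textbf{(iii)} still holds (the true supremum over $(n,n')$ is $\log(1+\sqrt2)/\pi<1$, attained at $n=n'=1$, and it decreases as $n+n'$ grows), so this is a numerical slip rather than a gap, but the entropy argument should be dropped or replaced by a direct estimate like the paper's $\frac12(\frac1n+\frac1{n'})\leqslant 1$. Also, the final appeal to the intermediate value theorem (to hit any modulus in $(0,1]$) is unnecessary for the lemma, which only needs one $R$ with $\modulus(A(\gamma,\gamma'))\leqslant 1$; invoking it would require you to justify continuity of the modulus in $R$ and that it tends to $0$ at $R_{\mathrm{crit}}$, neither of which you establish. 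Taking $R$ just above $R_{\mathrm{crit}}$ and using the (corrected) bound already suffices.
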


\begin{proof}
There are many ways to prove the existence of such a map. Here this proof uses the properties of the McMullen's family
$$g_{0,\lambda}:z\mapsto z^{n}+\frac{\lambda}{z^{n'}}$$
for $|\lambda|>0$ small enough (see \cite{AutomorphismsRationalMaps} and \cite{SingularPerturbations} for a complete study of this family). Recall that $g_{0,\lambda}$ has degree $n+n'$, and its critical set contains $n+n'$ simple critical points of the form
$$c_{k}=\left(\frac{n'}{n}\right)^{1/(n+n')}|\lambda|^{1/(n+n')}e^{2ki\pi/(n+n')}\quad\text{where}\ k\in\{1,2,\dots,n+n'\}$$
(the other critical points are $\infty$ of  multiplicity $n-1$ if $n>1$ and $0$ of multiplicity $n'-1$ if $n'>1$). Moreover, the preimages of $0$ are of the form
$$g_{0,\lambda}^{-1}(0)=\{|\lambda|^{1/(n+n')}e^{2ki\pi/(n+n')}\,/\,k\in\{1,2,\dots,n+n'\}\}.$$

Let $A$ be the preimage of the open unit disk $\D$, namely $A=g_{0,\lambda}^{-1}(\D)$. We are going to prove that for every $|\lambda|>0$ small enough $A$ is connected and actually an open annulus separating $0$ and $\infty$. Indeed remark that for every $z\in\C$ with modulus $|z|=|\lambda|^{1/(n+n')}$ we have
$$|g_{0,\lambda}(z)|=\left|z^{n}+\frac{\lambda}{z^{n'}}\right|\leqslant|\lambda|^{n/(n+n')}+\frac{|\lambda|}{|\lambda|^{n'/(n+n')}}=2|\lambda|^{n/(n+n')}.$$
Similarly for every $k\in\{1,2,\dots,n+n'\}$ we have
$$|g_{0,\lambda}(c_{k})|\leqslant\left(\frac{n'}{n}\right)^{n/(n+n')}|\lambda|^{n/(n+n')}+\left(\frac{n}{n'}\right)^{n'/(n+n')}\frac{|\lambda|}{|\lambda|^{n'/(n+n')}}=C|\lambda|^{n/(n+n')}$$
with
$$C=\left(\frac{n'}{n}\right)^{n/(n+n')}+\left(\frac{n}{n'}\right)^{n'/(n+n')}=\left(\frac{n+n'}{n}\right)^{n/(n+n')}\times\left(\frac{n+n'}{n'}\right)^{n'/(n+n')}\leqslant 2$$
(by using the arithmetic-geometric mean inequality $x^{n/(n+n')}\times y^{n'/(n+n')}\leqslant \frac{n}{n+n'}x+\frac{n'}{n+n'}y$).

So if $\lambda$ is such that $0<2|\lambda|^{n/(n+n')}<1$ then $A$ contains the circle centered at $0$ and of radius $|\lambda|^{1/(n+n')}$ where all the preimages of $0$ lie, together with $n+n'$ simple critical points of $g_{0,\lambda}$. In particular, $A$ is a connected set which separates $0$ and $\infty$ and it follows from the Riemann-Hurwitz formula applied to the degree $n+n'$ branched covering $g_{0,\lambda}|_{A}:A\rightarrow\D$ that $A$ is an open annulus.

Now let $\gamma$ be the outer boundary of $A$, namely the boundary of the connected component of $\CC-\overline{A}$ containing $\infty$, and $\gamma'$ be the inner boundary of $A$, namely the boundary of the connected component of $\CC-\overline{A}$ containing $0$. It turns out that $A=A(\gamma,\gamma')$ and $G=g_{0,\lambda}|_{A}:A(\gamma,\gamma')\rightarrow\D$ satisfy \textbf{(i)}. The point \textbf{(ii)} follows from the fact that $g_{0,\lambda}$ realizes a degree $n$ (respectively $n'$) branched covering on the the connected component of $\CC-\overline{A}$ containing $\infty$ (respectively $0$) with no critical points on the boundary. Moreover $\gamma$ and $\gamma'$ are quasicircles as preimages of the unit circle $\partial\D$ by conformal maps.

For the point \textbf{(iii)}, remark that for every $R\geqslant 1$ we have:
$$\begin{array}{rcl}
	|z|\leqslant\frac{1}{R^{1/n'}}|\lambda|^{1/(n+n')} & \Rightarrow & |g_{0,\lambda}(z)|\geqslant\frac{|\lambda|}{|z|^{n'}}-|z|^{n}\geqslant|\lambda|^{n/(n+n')}\big(R-\frac{1}{R^{n}}\big), \\
	\text{and}\quad|z|\geqslant R^{1/n}|\lambda|^{1/(n+n')} & \Rightarrow & |g_{0,\lambda}(z)|\geqslant|z|^{n}-\frac{|\lambda|}{|z|^{n'}}\geqslant|\lambda|^{n/(n+n')}\big(R-\frac{1}{R^{n'}}\big).
\end{array}$$
In particular if $R=2|\lambda|^{-n/(n+n')}$, then $\max\{\frac{1}{R^{n}},\frac{1}{R^{n'}}\}\leqslant\frac{1}{R}<\frac{1}{2}R$ (since $\lambda$ was chosen so that $0<2|\lambda|^{n/(n+n')}<1$ that implies $R>4$) and hence
$$|z|\leqslant\frac{1}{R^{1/n'}}|\lambda|^{1/(n+n')}\ \text{or}\ |z|\geqslant R^{1/n}|\lambda|^{1/(n+n')}\Rightarrow|g_{0,\lambda}(z)|>|\lambda|^{n/(n+n')}\frac{1}{2}R=1.$$
Consequently the preimage $A=A(\gamma,\gamma')$ of the unit disk is contained as essential subannulus in the round annulus $\{z\in\C\,/\,\frac{1}{R^{1/n'}}|\lambda|^{1/(n+n')}<|z|<R^{1/n}|\lambda|^{1/(n+n')}\}$ and the Grötzsch's inequality gives
$$\modulus(A(\gamma,\gamma'))\leqslant\frac{1}{2\pi}\log\left(\frac{R^{1/n}|\lambda|^{1/(n+n')}}{\frac{1}{R^{1/n'}}|\lambda|^{1/(n+n')}}\right)=\frac{1}{2\pi}\left(\frac{1}{n}+\frac{1}{n'}\right)\log(R).$$
In particular, if $\lambda$ is fixed so that $2|\lambda|^{n/(n+n')}=\frac{4}{e^{\pi}}<1$ then $R=2|\lambda|^{-n/(n+n')}=e^{\pi}$ and
$$\modulus(A(\gamma,\gamma'))\leqslant\frac{1}{2}\left(\frac{1}{n}+\frac{1}{n'}\right)\leqslant 1.$$
\end{proof}

In the proof above, $1$ is obviously not the optimal upper bound for $\modulus(A(\gamma,\gamma'))$. The author guesses that this modulus is arbitrary small when $\lambda$ is close to $0$. But one can prove that the modulus of the smallest round annulus containing $A(\gamma,\gamma')$ as essential subannulus is bounded by below by a positive constant which does not depend on $\lambda$. The same happens if the open unit disk $\D$ is replaced by any euclidean open disk centered at $0$ and containing the critical values. However we do not need a sharper estimation than \textbf{(iii)} in this paper (see Lemma \ref{LemEquip} and the proof of Lemma \ref{LemFolding}).

\subsection{A separating quasicircle}

The following lemma is used to define the quasicircle $\delta_{c}^{+}$ in the construction of Section \ref{SecConstruction} (see the proof of Lemma \ref{LemLastStep1}).

\begin{lem}[Separating quasicircle]\label{LemConfGeomEx}
Let $A(\gamma,\gamma')$ be an open annulus in $\CC$ bounded by a pair of disjoint quasicircles $\gamma,\gamma'$, and let $a$ be a point in $A(\gamma,\gamma')$. Then there exists a quasicircle $\delta$ in $A(\gamma,\gamma')$ which separates $a$ from $\gamma\cup\gamma'$ such that $\modulus(A(\gamma,\delta))$ is arbitrary small.
\end{lem}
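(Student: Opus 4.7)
The plan is to pull the problem back to the unit disk via a Riemann map and construct $\delta$ explicitly as the boundary of a Jordan quasidisk that fills almost all of $A(\gamma,\gamma')$. Since $\gamma$ is a quasicircle, hence a Jordan curve, Carathéodory's theorem gives a Riemann map $\phi\colon\mathbb D\to D$ extending to a homeomorphism $\overline{\mathbb D}\to\overline D$ with $\phi(\partial\mathbb D)=\gamma$, where $D$ is the component of $\widehat{\mathbb C}-\gamma$ containing $A(\gamma,\gamma')$. Write $z_0=\phi^{-1}(a)$ and $K=\phi^{-1}(\overline{D_{\gamma'}})$, where $D_{\gamma'}$ is the Jordan subdomain of $D$ bounded by $\gamma'$; then $K\subset\mathbb D$ is a compact quasidisk and $z_0$ lies in the annular region $\mathbb D-K$. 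By conformal invariance of the modulus it suffices to construct, for every $\varepsilon>0$, a Jordan quasidisk $D_0\subset\mathbb D-K$ containing $z_0$ with $\modulus(\mathbb D-\overline{D_0})<\varepsilon$, and then set $\delta:=\phi(\partial D_0)$.

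The construction of $D_0$ makes it fill nearly all of $\mathbb D-K$. Fix a smooth simple arc $\sigma\subset\overline{\mathbb D}-\{z_0\}$ joining $\partial K$ to $\partial\mathbb D$, and for a small parameter $\eta>0$ define
\[
D_0:=\{w\in\mathbb D:|w|<1-\eta\}\setminus\overline{N_\eta(K\cup\sigma)},
\]
where $N_\eta$ denotes the Euclidean $\eta$-neighborhood; after smoothing the finitely many corners where the arc $\{|w|=1-\eta\}$ meets $\partial N_\eta(K\cup\sigma)$, $\partial D_0$ is a quasicircle. For $\eta$ small, $D_0$ is a Jordan quasidisk contained in $\mathbb D-K$ and containing $z_0$. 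Then $\phi(D_0)$ is a Jordan subdomain of $A(\gamma,\gamma')$ containing $a$ and disjoint from $\gamma'$, so $\delta=\phi(\partial D_0)$ is a quasicircle in $A(\gamma,\gamma')$ separating $a$ from $\gamma\cup\gamma'$.

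For the modulus estimate, $\phi$ restricts to a biholomorphism $\mathbb D-\overline{D_0}\to D-\overline{\phi(D_0)}=A(\gamma,\delta)$, so $\modulus(A(\gamma,\delta))=\modulus(\mathbb D-\overline{D_0})$. Every loop $\tau$ in the family $\Gamma^{\ast}$ of loops in $\mathbb D-\overline{D_0}$ around $\overline{D_0}$ winds exactly once around the point $z_0\in D_0$ and therefore satisfies $\int_\tau|w-z_0|^{-1}|dw|\geq 2\pi$; the extremal-length characterization of the modulus then gives
\[
\modulus(\mathbb D-\overline{D_0})=\frac{1}{\lambda(\Gamma^{\ast})}\leq\frac{1}{(2\pi)^{2}}\iint_{\mathbb D-\overline{D_0}}\frac{dA}{|w-z_0|^{2}}.
\]
The region $\mathbb D-\overline{D_0}$ is contained in an $\eta$-collar of $\partial\mathbb D$ together with $\eta$-neighborhoods of $K$ and of $\sigma$, each of Euclidean area $O(\eta)$; on each of these pieces $|w-z_0|$ is bounded below by a positive constant independent of $\eta$, since $z_0$ is a fixed interior point of $\mathbb D-K-\sigma$ and is at positive distance from $\partial\mathbb D$. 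Hence the right-hand side is $O(\eta)$ and can be made less than any prescribed positive number by choosing $\eta$ sufficiently small.

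The main obstacle is cosmetic rather than analytical: verifying that the boundary of the $\eta$-neighborhood construction can be arranged to be a genuine quasicircle, which is handled by choosing $K$ and $\sigma$ smooth and rounding off the finite set of corners. The analytic content of the proof lies entirely in the winding-number / extremal-length bound above.
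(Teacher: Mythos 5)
There is a genuine gap in the modulus estimate. You bound
\[
\modulus(\D-\overline{D_0})\ \leq\ \frac{1}{(2\pi)^{2}}\iint_{\D-\overline{D_0}}\frac{dA}{|w-z_0|^{2}}
\]
and then claim the three pieces covering $\D-\overline{D_0}$ (the $\eta$-collar, $N_\eta(K)$, $N_\eta(\sigma)$) each have Euclidean area $O(\eta)$. That is false for $N_\eta(K)$: it contains $K$ itself, and $K=\phi^{-1}(\overline{D_{\gamma'}})$ is a compact set with nonempty interior and therefore fixed positive area. Indeed $K\subset\D-\overline{D_0}$ always holds (since $D_0\cap\overline{N_\eta(K\cup\sigma)}=\emptyset$), so
\[
\iint_{\D-\overline{D_0}}\frac{dA}{|w-z_0|^{2}}\ \geq\ \iint_{K}\frac{dA}{|w-z_0|^{2}}\ >\ 0
\]
uniformly in $\eta$. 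Your upper bound on the modulus is thus a fixed positive constant, not a quantity tending to $0$. Put differently: the area of $A(\gamma,\delta)=D-\overline{\phi(D_0)}$ never tends to zero, because this annulus always contains the full disk bounded by $\gamma'$; the candidate metric $\rho=|w-z_0|^{-1}$ wastes too much mass on that solid region.

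The underlying statement is still true; the annulus is thin in the sense of modulus even though not in area, because it is pinched along almost all of $\partial\D$. The paper's argument handles this correctly: it normalizes to a round annulus $\{r<|z|<1\}$, takes $\delta_\varepsilon$ to be a Euclidean circle almost tangent to the outer boundary, and estimates $\modulus(A(\gamma,\delta_\varepsilon))$ directly as the extremal length of the family of curves \emph{joining} $\delta_\varepsilon$ to $\gamma$. Near the tangency point there is a one-parameter family of disjoint radial segments of length $O(\varepsilon)$ joining the two boundaries; integrating the admissibility inequality over this family and applying Cauchy--Schwarz gives $\modulus(A(\gamma,\delta_\varepsilon))=O(\sqrt{\varepsilon})\to 0$. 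The essential point is that this estimate only uses a small region of small area near the pinch, not the total area of the annulus. To repair your proof you would need a similar ``many short transversals'' argument rather than a single global test metric; for instance, replace the separating-loop characterization by the joining-curve characterization and exploit the thin collar, or choose a test metric supported only on the collar so that the wasted mass on $K$ does not appear.

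Two smaller points. First, $\phi^{-1}(\overline{D_{\gamma'}})$ need not be a \emph{quasi}disk in general (a Riemann map of $D$ to $\D$ need not preserve the quasicircle property of an interior curve), though being a Jordan subdomain suffices for your construction. Second, the claim that the $\eta$-neighborhoods of $K$ and $\sigma$ have area $O(\eta)$ would require at least rectifiability of $\partial K$ even after subtracting $K$ itself; this holds if $\gamma'$ is smooth but not for an arbitrary quasicircle.
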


The main idea is merely to define a quasicircle $\delta$ close enough to the boundary $\gamma$. We though provide an explicit proof which uses the definition of the modulus by extremal length (see \cite{ConformalInvariants}).

\begin{proof}
Up to a biholomorphic change of coordinates, we may assume that
$$\gamma=\{z\in\C\,/\,|z|=1\},\ \gamma'=\{z\in\C\,/\,|z|=e^{-2\pi\modulus(A(\gamma,\gamma'))}\},\ \text{and}\ a\in]e^{-2\pi\modulus(A(\gamma,\gamma'))},1[.$$
Fix $x$ to be the positive real number $(1+e^{-2\pi\modulus(A(\gamma,\gamma'))})/2$. For every $\varepsilon>0$ small enough, define $\delta_{\varepsilon}$ to be the euclidean circle centered at $x$ and of radius $1-x-\varepsilon$. Notice that $\delta_{\varepsilon}$ is included in $A(\gamma,\gamma')$ and that $\delta_{\varepsilon}$ separates $a$ from $\gamma\cup\gamma'$ for every $\varepsilon>0$ small enough.

For every angle $\theta$ (small enough), consider the path $\ell_{\theta}$ joining $\delta_{\varepsilon}$ to $\gamma$ of the form $\ell_{\theta}=\{z=re^{i\theta}\,/\,R_{\theta}\leqslant r\leqslant 1\}$ with $R_{\theta}>0$ maximal so that $R_{\theta}e^{i\theta}\in\delta_{\varepsilon}$. By classical results from euclidean geometry and trigonometry, we get
$$R_{\theta}=x\cos(\theta)+\sqrt{(1-x-\varepsilon)^{2}-x^{2}\sin^{2}(\theta)}.$$
Since $\theta\mapsto R_{\theta}$ is an even function with a local maximum at $\theta=0$, it follows for every $\varepsilon>0$ small enough that
\begin{eqnarray}
	\theta\in[-\sqrt{\varepsilon},\sqrt{\varepsilon}]\Longrightarrow R_{\theta}\geqslant R_{\sqrt{\varepsilon}} & = & x\cos(\sqrt{\varepsilon})+\sqrt{(1-x-\varepsilon)^{2}-x^{2}\sin^{2}(\sqrt{\varepsilon})} \nonumber\\
	& = & 1-\frac{2-x}{2(1-x)}\varepsilon+\underset{\varepsilon\rightarrow 0}{O}(\varepsilon^{2}) \nonumber\\
	& \geqslant & 1-C\varepsilon \label{IneqPath}
\end{eqnarray}
where $C$ is a positive constant fixed so that $C>\frac{2-x}{2(1-x)}$.

Now recall that the modulus of $A(\gamma,\delta_{\varepsilon})$ is given by the extremal length of the collection $L$ of rectifiable paths connecting $\delta_{\varepsilon}$ and $\gamma$, namely
$$\modulus(A(\gamma,\delta_{\varepsilon}))=\sup_{\rho}\frac{\left(\inf_{\ell\in L}\int_{\ell}\rho|dz|\right)^{2}}{\int_{A(\gamma,\delta_{\varepsilon})}\rho^{2}dxdy}$$
where the supremum is over all measurable functions $\rho:A(\gamma,\delta_{\varepsilon})\rightarrow[0,+\infty]$ such that $\int_{A(\gamma,\delta_{\varepsilon})}\rho^{2}dxdy<+\infty$. Let $\rho$ be such a measurable function. For every $\theta$ (small enough), we have
$$\inf_{\ell\in L}\int_{\ell}\rho|dz|\leqslant\int_{\ell_{\theta}}\rho|dz|=\int_{R_{\theta}}^{1}\rho(re^{i\theta})dr$$
that leads, integrating over $\theta\in[-\sqrt{\varepsilon},\sqrt{\varepsilon}]$ and applying the Cauchy-Schwarz inequality, to
\begin{eqnarray*}
	2\sqrt{\varepsilon}\inf_{\ell\in L}\int_{\ell}\rho|dz| & \leqslant & \left(\int_{-\sqrt{\varepsilon}}^{\sqrt{\varepsilon}}\int_{R_{\theta}}^{1}\rho(re^{i\theta})^{2}rdrd\theta\right)^{1/2}\left(\int_{-\sqrt{\varepsilon}}^{\sqrt{\varepsilon}}\int_{R_{\theta}}^{1}\frac{1}{r}drd\theta\right)^{1/2}\\
	& \leqslant & \left(\int_{A(\gamma,\delta_{\varepsilon})}\rho^{2}dxdy\right)^{1/2}\left(\int_{-\sqrt{\varepsilon}}^{\sqrt{\varepsilon}}\log\left(\frac{1}{R_{\theta}}\right)d\theta\right)^{1/2}.
\end{eqnarray*}
Therefore it follows from the inequality (\ref{IneqPath}) that
\begin{eqnarray*}
	\frac{\left(\inf_{\ell\in L}\int_{\ell}\rho|dz|\right)^{2}}{\int_{A(\gamma,\delta_{\varepsilon})}\rho^{2}dxdy} & \leqslant & \frac{1}{4\varepsilon}\int_{-\sqrt{\varepsilon}}^{\sqrt{\varepsilon}}\log\left(\frac{1}{R_{\theta}}\right)d\theta\\
	& \leqslant & \frac{1}{4\varepsilon}\int_{-\sqrt{\varepsilon}}^{\sqrt{\varepsilon}}\log\left(\frac{1}{1-C\varepsilon}\right)d\theta=\frac{1}{2\sqrt{\varepsilon}}\log\left(\frac{1}{1-C\varepsilon}\right).
\end{eqnarray*}
Finally we take the supremum over all measurable functions $\rho$ to get
$$\modulus(A(\gamma,\delta_{\varepsilon}))\leqslant\frac{1}{2\sqrt{\varepsilon}}\log\left(\frac{1}{1-C\varepsilon}\right)\underset{\varepsilon\rightarrow 0}{\sim}\frac{C}{2}\sqrt{\varepsilon}\underset{\varepsilon\rightarrow 0}{\rightarrow}0$$
that concludes the proof.
\end{proof}


\bibliographystyle{alpha}
\bibliography{biblio}
\addcontentsline{toc}{section}{References}

\end{document}